\begin{document}

\newtheorem{theorem}{Theorem}[section]
\newtheorem{corollary}[theorem]{Corollary}
\newtheorem{definition}[theorem]{Definition}
\newtheorem{conjecture}[theorem]{Conjecture}
\newtheorem{question}[theorem]{Question}
\newtheorem{lemma}[theorem]{Lemma}
\newtheorem{proposition}[theorem]{Proposition}
\newtheorem{example}[theorem]{Example}
\newtheorem{problem}[theorem]{Problem}
\newenvironment{proof}{\noindent {\bf
Proof.}}{\rule{3mm}{3mm}\par\medskip}
\newcommand{\remark}{\medskip\par\noindent {\bf Remark.~~}}
\newcommand{\pp}{{\it p.}}
\newcommand{\de}{\em}

\newcommand{\JEC}{{\it Europ. J. Combinatorics},  }
\newcommand{\JCTB}{{\it J. Combin. Theory Ser. B.}, }
\newcommand{\JCT}{{\it J. Combin. Theory}, }
\newcommand{\JGT}{{\it J. Graph Theory}, }
\newcommand{\ComHung}{{\it Combinatorica}, }
\newcommand{\DM}{{\it Discrete Math.}, }
\newcommand{\ARS}{{\it Ars Combin.}, }
\newcommand{\SIAMDM}{{\it SIAM J. Discrete Math.}, }
\newcommand{\SIAMADM}{{\it SIAM J. Algebraic Discrete Methods}, }
\newcommand{\SIAMC}{{\it SIAM J. Comput.}, }
\newcommand{\ConAMS}{{\it Contemp. Math. AMS}, }
\newcommand{\TransAMS}{{\it Trans. Amer. Math. Soc.}, }
\newcommand{\AnDM}{{\it Ann. Discrete Math.}, }
\newcommand{\NBS}{{\it J. Res. Nat. Bur. Standards} {\rm B}, }
\newcommand{\ConNum}{{\it Congr. Numer.}, }
\newcommand{\CJM}{{\it Canad. J. Math.}, }
\newcommand{\JLMS}{{\it J. London Math. Soc.}, }
\newcommand{\PLMS}{{\it Proc. London Math. Soc.}, }
\newcommand{\PAMS}{{\it Proc. Amer. Math. Soc.}, }
\newcommand{\JCMCC}{{\it J. Combin. Math. Combin. Comput.}, }
\newcommand{\GC}{{\it Graphs Combin.}, }

\title{Extremal Graph Theory  
 for Degree Sequences\thanks{
Supported by National Natural Science Foundation of China
(No.11271256) and  Innovation Program of Shanghai Municipal Education Commission (No:14ZZ016)}}
\author{  Xiao-Dong Zhang \\
{\small Department of Mathematics and MOE-LSC}\\
{\small Shanghai Jiao Tong University} \\
{\small  800 Dongchuan road, Shanghai, 200240, P.R. China}\\
{\small Email:  xiaodong@sjtu.edu.cn}
 }
\date{}
\maketitle
 \begin{abstract}
 This paper surveys some recent results and progress on the extremal problems in a given set consisting of all simple connected graphs with the same graphic degree sequence. In particular, we study and characterize  the extremal graphs having the maximum (or minimum) values of graph invariants such as (Laplacian, p-Laplacian, signless Laplacian) spectral radius, the first Dirichlet eigenvalue, the Wiener index, the Harary index, the number of subtrees and  the chromatic number etc, in given  sets with the same tree, unicyclic, graphic degree sequences. Moreover, some conjectures are included.
 \end{abstract}

{{\bf Key words:} Graphic degree sequence, majorization, tree, unicyclic graphs, spectral radius, Wiener index.
 }

      {{\bf AMS Classifications:} 05C50, 05C05, 05C07, 05C12, 05C35}
\vskip 0.5cm

\section{Introduction}

 A nonincreasing sequence of nonnegative integers
 $\pi=(d_0,d_1,\cdots, d_{n-1})$  with $d_0\ge d_1\ge \cdots \ge d_{n-1}$ is called {\it graphic} if there
 exists a simple graph having $\pi$ as its  vertex degree sequence and such a graph is called a {\it realization} of $\pi$.
Erd\"{o}s and Gallai \cite{erdos1960} characterized all graphic degree sequences.
\begin{theorem}\cite{erdos1960} \label{erdos-th}
A nonincreasing sequence of nonnegative integers
$\pi=(d_0,d_1,\cdots,$ $ d_{n-1})$
   is graphic  degree sequence if and only if  $\sum_{i=0}^{n-1}d_i$ is even and
   $$\sum_{i=0}^rd_i\le r(r+1)+\sum_{i=r+1}^{n-1}\min\{r+1, d_i\}, $$
for $r=0, 1,\cdots, n-2.$
\end{theorem}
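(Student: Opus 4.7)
The plan is to prove the two directions separately, with the necessity being an almost immediate counting argument and the sufficiency requiring an induction.

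\textbf{Necessity.} Suppose $G$ is a realization of $\pi$ with vertices $v_0, v_1, \ldots, v_{n-1}$ labelled so that $\deg(v_i) = d_i$. Summing degrees gives $\sum_{i=0}^{n-1} d_i = 2|E(G)|$, which is even. Fix $r$ with $0 \le r \le n-2$, and split the vertex set as $A = \{v_0, \ldots, v_r\}$ and $B = \{v_{r+1}, \ldots, v_{n-1}\}$. Counting edge endpoints in $A$,
$$\sum_{i=0}^r d_i = 2\, e(A) + e(A,B),$$
where $e(A)$ is the number of edges inside $A$ and $e(A,B)$ the number of edges between $A$ and $B$. Since $e(A) \le \binom{r+1}{2}$, the first term is at most $r(r+1)$. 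For each $i > r$, the vertex $v_i$ contributes $|N(v_i) \cap A|$ to $e(A,B)$, and this is bounded both by $|A|=r+1$ and by $d_i$, yielding $e(A,B) \le \sum_{i=r+1}^{n-1}\min\{r+1, d_i\}$. Combining gives the inequality.

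\textbf{Sufficiency.} This is the substantive direction. I would proceed by induction on $S(\pi) = \sum d_i$; the base case $S(\pi)=0$ is the empty graph. For the inductive step, assume $d_0 \ge 1$. The key reduction is the following: produce from $\pi$ a shorter/smaller sequence $\pi'$ that still satisfies the Erd\H{o}s--Gallai conditions, so that by the inductive hypothesis $\pi'$ has a realization $G'$, and then recover a realization of $\pi$ by adjoining one vertex (or one edge) to $G'$. The cleanest choice, following the spirit of Hakimi's reduction, is to remove the vertex of degree $d_0$ and to attach it to vertices corresponding to the $d_0$ largest remaining degrees, obtaining the sorted rearrangement $\pi'$ of $(d_1-1,d_2-1,\ldots,d_{d_0}-1,d_{d_0+1},\ldots,d_{n-1})$.

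\textbf{Main obstacle.} The hard part is verifying that $\pi'$ inherits the Erd\H{o}s--Gallai inequalities from $\pi$. One must check, for each index $r$ in the shortened sequence, that the inequality for $\pi'$ at $r$ follows from the inequality for $\pi$ at $r$ or at $r+1$, being careful with the sorting (since after decrementing $d_1,\ldots,d_{d_0}$ the order may shift). I would handle this by splitting into the cases $r+1 \le d_0$ and $r+1 > d_0$: in the first case the left side of the EG inequality for $\pi'$ equals that of $\pi$ shifted down by exactly $r+1$, while the right side decreases by at most $r+1$ (since each of the $r+1$ decrements stays inside the ``large'' block); in the second case one compares to the EG inequality for $\pi$ at index $r$ directly, tracking how the $\min\{r+1, d_i\}$ terms change. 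This bookkeeping, together with the parity preservation (removing $d_0$ from an even sum and subtracting $d_0$ ones leaves an even sum), completes the induction.
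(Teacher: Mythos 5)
The paper itself contains no proof of this statement: it is the classical Erd\H{o}s--Gallai theorem, quoted with a citation to the 1960 paper, so there is no internal argument to compare yours against and it must be judged on its own. Your necessity half is complete and correct: the identity $\sum_{i=0}^r d_i = 2e(A)+e(A,B)$ with $A=\{v_0,\dots,v_r\}$, together with $e(A)\le r(r+1)/2$ and $|N(v_i)\cap A|\le \min\{r+1,d_i\}$ for $i>r$, is exactly the standard counting argument.

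The sufficiency half, however, has a genuine gap: the entire difficulty of the theorem sits in the lemma you yourself label the ``main obstacle'' --- that the sorted residual $\pi'$ of $(d_1-1,\dots,d_{d_0}-1,d_{d_0+1},\dots,d_{n-1})$ again satisfies the Erd\H{o}s--Gallai inequalities --- and you do not prove it; you only sketch a case split, and the sketch is wrong as stated. In the case $r+1\le d_0$ you assert that the left-hand side for $\pi'$ ``equals that of $\pi$ shifted down by exactly $r+1$,'' i.e.\ that the $r+1$ largest entries of $\pi'$ are $d_1-1,\dots,d_{r+1}-1$. Re-sorting can promote undecremented entries past decremented ones: for $\pi=(2,2,2,2)$ one has $d_0=2$, the residual multiset is $\{1,1,2\}$, which sorts to $(2,1,1)$, so at $r=0$ the true left side is $2$ while your formula gives $d_1-1=1$. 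The error is in the fatal direction (you underestimate the quantity you must bound from above), so the case analysis as described does not close. The lemma itself is true and the induction can be completed --- one must also record, from the inequality at $r=0$, that $d_1,\dots,d_{d_0}\ge 1$ so no entry goes negative --- but the interleaving caused by re-sorting is precisely where the bookkeeping bites, and it is left unhandled. A cleaner route is Choudum's: decrement $d_0$ and the last entry equal to $d_{t}$ in the second block, which lowers the sum by $2$, keeps the sequence sorted, and repairs the realization with a single edge exchange.
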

Moreover, Senior \cite{senior1951} and Hakimi\cite{hakimi1962}  characterized  all graphic degree sequences with at least a realization being connected.
\begin{theorem}(\cite{senior1951},\cite{hakimi1962})\label{sen-hak}
A nonincreasing sequence of nonnegative integers
$\pi=(d_0, d_1,$ $\cdots, d_{n-1})$
   is graphic  sequence  with  at least a realization being connected if and only if  $\sum_{i=0}^{n-1}d_i$ is even and
   $$\sum_{i=0}^rd_i\le r(r+1)+\sum_{i=r+1}^{n-1}\min\{r, d_i\}, $$
for $r=0, 1,\cdots, n-2,$
$\sum_{i=0}^{n-1}d_i\ge \frac{n(n-1)}{2} $ and $d_{n-1}\ge 1$.
\end{theorem}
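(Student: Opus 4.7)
The plan is to deduce Theorem \ref{sen-hak} from the Erd\H{o}s--Gallai characterization (Theorem \ref{erdos-th}) by treating the two implications separately and, for sufficiency, making an arbitrary realization connected through degree-preserving edge swaps.

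\emph{Necessity.} Let $G$ be a connected realization of $\pi$. The parity of $\sum d_i$ is the handshake identity. Since $G$ is connected it contains a spanning tree, giving $|E(G)|\ge n-1$ and hence the required lower bound on $\sum d_i$; and since $n\ge 2$ and $G$ is connected, no vertex is isolated, so $d_{n-1}\ge 1$. The inequality in the statement I would prove exactly as in Erd\H{o}s--Gallai: fix $r$, let $S$ be the set of the $r+1$ vertices of largest degree, and split
$$\sum_{i=0}^r d_i \;=\; 2\,e(G[S]) + \sum_{v\notin S} |N(v)\cap S|,$$
bounding $2\,e(G[S])\le r(r+1)$ and each $|N(v)\cap S|\le \min\{r+1,d(v)\}$.

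\emph{Sufficiency.} Since $\pi$ satisfies the Erd\H{o}s--Gallai conditions, Theorem \ref{erdos-th} supplies some (not necessarily connected) realization $G_0$. If $G_0$ is already connected we are done; otherwise I will decrease the number of components by one using a $2$-switch, and iterate. Because $d_{n-1}\ge 1$, no vertex is isolated, so every component of $G_0$ contains at least one edge. Choose two distinct components $C_1,C_2$ and edges $uv\in E(C_1)$, $xy\in E(C_2)$. As $\{u,v\}$ and $\{x,y\}$ lie in different components, $ux$ and $vy$ are automatically non-edges of $G_0$. Deleting $uv,xy$ and adding $ux,vy$ preserves every vertex degree while merging $C_1$ and $C_2$ into a single component. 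Iterating at most $n-1$ times produces a connected realization of $\pi$.

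The main obstacle is guaranteeing that a valid $2$-switch is always available at each stage; the argument above reduces this to a non-adjacency check between pairs of vertices in distinct components, which is free of charge. This is exactly where the extra hypotheses beyond Theorem \ref{erdos-th} (namely $d_{n-1}\ge 1$ and the lower bound on $\sum d_i$) earn their keep: they rule out isolated vertices and guarantee enough edges in any realization for the iteration to run to completion.
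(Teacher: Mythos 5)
The paper itself gives no proof of Theorem \ref{sen-hak} (it is quoted from Senior and Hakimi), so your proposal must stand on its own. Your overall strategy --- Erd\H{o}s--Gallai to get some realization, then degree-preserving $2$-switches to connect it --- is the standard and correct route, but the switch step as you describe it has a genuine gap. Deleting $uv\in E(C_1)$ and $xy\in E(C_2)$ and adding $ux,vy$ does \emph{not} in general merge $C_1$ and $C_2$ into one component: if both $uv$ and $xy$ are bridges, then $C_1-uv$ splits into pieces $A_u\ni u$ and $A_v\ni v$, $C_2-xy$ splits into $B_x\ni x$ and $B_y\ni y$, and the new edges produce the two components $A_u\cup B_x$ and $A_v\cup B_y$, so the component count does not drop. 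Concretely, take $\pi=(3,2,2,1,1,1)$ realized as a triangle $pqr$ with a pendant vertex $s$ at $p$, together with a disjoint edge $uv$; choosing the edges $uv$ and $xy=ps$, your switch returns another realization with exactly two components, and nothing prevents the iteration from cycling. The repair is precisely where the hypothesis $\sum_i d_i\ge 2(n-1)$ must enter: if $G_0$ is disconnected with $c\ge 2$ components and $|E(G_0)|\ge n-1>n-c$, then $G_0$ is not a forest, so some component contains a cycle; choose $uv$ to be an edge lying on a cycle (so that $C_1-uv$ remains connected) and $xy$ any edge of a different component, and then the switch genuinely decreases the number of components by one. Your write-up instead invokes the degree-sum condition only through the vague phrase ``enough edges for the iteration to run to completion,'' which does not close this hole.

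A secondary issue: the theorem as printed is garbled, and your necessity argument silently proves a different (correct) statement without saying so. The displayed inequality should read $\min\{r+1,d_i\}$ --- with $\min\{r,d_i\}$ even the connected star $K_{1,n-1}$ violates the case $r=0$ --- and indeed your counting argument establishes the $\min\{r+1,d_i\}$ version, not the printed one. Likewise the condition $\sum_{i=0}^{n-1}d_i\ge \frac{n(n-1)}{2}$ is false for a path on $n\ge 5$ vertices and should be $\frac{1}{2}\sum_{i=0}^{n-1}d_i\ge n-1$; your deduction from $|E(G)|\ge n-1$ yields only $\sum_i d_i\ge 2(n-1)$, which is the intended condition but not the stated one. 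You should state explicitly that you are proving the corrected form of the theorem.
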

 Let $\pi$ be graphic degree sequence with at least a realization being connected. Denote by $\mathcal{G}(\pi)$ the set of all connected graphs which are realizations of $\pi$,  i.e.,
\begin{equation}
\mathcal{G}(\pi)=\{G \ |  G \ {\rm is\  connected \ and\ has \  \pi \ as
 \  its \  degree\  sequence}\}.
\end{equation}
Sierksma and Hoogeveen \cite{sierksma1991} surveyed seven criteria for a nonnegative integer sequence being graphic. For graphic sequences, Ferrara \cite{ferrara2014} surveyed   recent research progress and new results on graphic sequences and presented a
number of approachable open problems.

 Extremal graph theory is an extremal important branch of graph theory. One is
interested in relations among the various graph invariants,
such as order, size, connectivity, chromatic number, diameter
and eigenvalues, and also in the values of these invariants
which ensure that the graph has certain properties. Bollob\'{a}s published an excellent book \cite{bollobas1978} and recent survey in \cite{bollobas1995}.  Erd\"{o}s, Jackson and Lehel in \cite{erdos1991} proposed  the problem on the possible clique number attained by graphs with the same degree sequence, i.e., determine the smallest integer number $M$
 so that each graphic sequence $\pi=(d_0, \cdots, d_{n-1})$ with $d_0\ge d_1\ge\cdots\ge d_{n-1}\ge 1$ and $\sum_{i=0}^{n-1}d_i\ge M$ has a {\it realization} $G$ having given clique number. Li and Yin \cite{liyin2004} surveyed extremal graph theory and degree sequences, in particular for the above problem and its variants.

  On the other hand, many graphs coming from the real world have power law degree distribution: the number of vertices with degree $k$ is proportional for $k^{-\beta}$ for some exponent $\beta\ge 1$. Chung and Lu  (for example, see \cite{chunglu2002}, \cite{chunglu2006}) proposed the famous Chung-Lu's model: random graphs with given expected degree sequences and studied  their properties such as diameters, eigenvalues, the average distance, etc.
   In this survey, we just focus  on some extremal properties of graphs with given degree sequences. Graph invariants such as  the spectral radius, the Dirichlet eigenvalues, the Wiener index, the Harary index,   the number of subtrees, etc.,  will be considered. Graphic degree sequences such as tree degree sequences, unicyclic degree sequences etc, will be involved. Moreover, the majorization theorem on two different degree sequences are obtained.
Therefore, we may
 propose the following general problem.
\begin{problem} For a given  graphic degree sequence $\pi$, let
\begin{eqnarray*}
 {\mathcal G}_{ \pi}=\{ G \ | \   G \ {\rm is\  connected \ with\  \pi \ as
 \  its \  degree\  sequence}\}.
 \end{eqnarray*}
  For some graph invariants, such as  spectral radius, the Wiener index, etc,  find
   the maximum (minimum) value of these graph invariants in  ${\mathcal G}_{ \pi}$ and characterize all
extremal graphs which attain these values.
\end{problem}

\section{Preliminary}
In this section, we introduce some notions and properties. For a graph $G=(V, E)$
with a root $v_0$,  denote by $dist(v,v_0)$  the distance between
vertices $v$ and $v_0$. Moreover, the distance $dist(v,v_0)$ is
called the {\it height} $h(v)=dist(v, v_0)$ of a vertex $v$.

\begin{definition}\label{BFS}(\cite{Biyikoglu2008}, \cite{zhang2009})
Let $G=(V,E)$ be a graph with root $v_0$. A well-ordering $\prec$ of
the vertices is called a breadth-first search ordering (BFS-ordering
for short) if the following holds for all vertices $u, v\in V$:

(1) $u\prec v$ implies $h(u)\le h(v);$

 (2) $u\prec v$ implies $d(u)\ge d(v)$;

 (3) let  $ uv\in E(G)$,  $xy\in E(G)$, $uy\notin E(G), xv \notin
 E(G)$ with $h(u)=h(x)=h(v)-1=h(y)-1$. If  $u\prec x$,
 then $v\prec y$.\\
 We call a graph that has a {\it BFS-ordering} of its vertices a
 {\it BFS-graph}.
\end{definition}

For example, for a given degree sequence $\pi=(4, 4,
 3,3,3,3,2,1,1,1, 1, 1, 1,  1, 1, 1,1)$, $T_{\pi}^*$  is the tree of order 17
 (see Fig.1). There is a vertex $v_{01}$ in layer 0; four
 vertices $v_{11}, v_{12}, v_{13}, v_{14}$ in layer 1; nine
 vertices $v_{21},v_{22}, \cdots, v_{29}$ in layer 2; three
 vertices $v_{31}, v_{32}, v_{33}$ in layer 3.

 \setlength{\unitlength}{0.1in}
\begin{picture}(60,30)\label{fig11}
\put(28,27){\circle{0.5}}
 \put(28.7,27){$v_{01}$}
 \put(27.8,26.8){\line(-3,-1){20}}
\put(27.8,26.8){\line(-1,-1){6.6}}
 \put(28.2,26.8){\line(1,-1){6.6}}
\put(28.2,26.8){\line(3,-1){20}}

\put(7.6,20){\circle{0.5}} \put(20.9,20){\circle{0.5}}
\put(35,20){\circle{0.5}} \put(48.4,20){\circle{0.5}}

\put(9,19.5){$v_{11}$}\put(22,19.5){$v_{12}$}
\put(36,19.5){$v_{13}$}\put(49.4,19.5){$v_{14}$}

\put(7.4,19.8){\line(-1,-1){5.1}} \put(7.6,19.78){\line(0,-1){5}}
 \put(7.8,19.8){\line(1,-1){5.1}}
\put(2.2,14.5){\circle{0.5}} \put(7.6,14.5){\circle{0.5}}
\put(13,14.5){\circle{0.5}}

\put(20.7,19.8){\line(-1,-1){5.1}} \put(21.1,19.8){\line(1,-1){5.1}}
\put(15.4,14.5){\circle{0.5}} \put(26.4,14.5){\circle{0.5}}

\put(3.2,13.3){$v_{21}$} \put(8.5,13.3){$v_{22}$}
\put(12.6,13.3){$v_{23}$} \put(15.25,13.3){$v_{24}$}
\put(26.5,13.3){$v_{25}$} \put(29.5,13.3){$v_{26}$}
\put(40,13.3){$v_{27}$} \put(42.5,13.3){$v_{28}$}
\put(53,13.34){$v_{29}$}

 \put(34.8,19.8){\line(-1,-1){5.1}}
\put(35.2,19.8){\line(1,-1){5.1}}
 \put(29.5,14.5){\circle{0.5}}
\put(40.4,14.5){\circle{0.5}}

\put(48.2,19.8){\line(-1,-1){5.1}}
\put(48.6,19.8){\line(1,-1){5.1}}
 \put(43,14.5){\circle{0.5}}
\put(53.8,14.5){\circle{0.5}}

\put(2,14.3){\line(-1,-2){2}}
 \put(2.4,14.3){\line(1,-2){2}}
 \put(-0.15,10){\circle{0.5}}
\put(4.55,10){\circle{0.5}}

\put(7.6,14.25){\line(0,-1){3.8}}
 \put(7.6,10){\circle{0.5}}
 \put(-0.4,8.7){$v_{31}$} \put(4.1,8.7){$v_{32}$} \put(7, 8.7){$v_{33}$}

 \put(25,5){\bf Figure~1}
          \end{picture}
   \par
   It is easy to see that every tree has an ordering  such that it  satisfies the conditions (1) and (3) by using breadth-first search, but  not every tree has a
BFS-ordering. For example, the following tree $T$ of order 17 does not have  a
BFS-ordering with degree sequence $\pi=(4, 4,
 3,3,3,3,2,1,1,1,1,1, 1,$ $ 1, 1, 1,1)$ (see Fig.2).

 \setlength{\unitlength}{0.1in}
\begin{picture}(60,30)\label{fig2}

\put(28,27){\circle{0.5}}
 \put(28.7,27){$v_{01}$}
 \put(27.8,26.8){\line(-3,-1){20}}
\put(27.8,26.8){\line(-1,-1){6.6}}
 \put(28.2,26.8){\line(1,-1){6.6}}
\put(28.2,26.8){\line(3,-1){20}}

\put(7.6,20){\circle{0.5}} \put(20.9,20){\circle{0.5}}
\put(35,20){\circle{0.5}} \put(48.4,20){\circle{0.5}}

\put(9,19.5){$v_{11}$}\put(22,19.5){$v_{12}$}\put(36,19.5){$v_{13}$}\put(49.4,19.5){$v_{14}$}

\put(7.4,19.8){\line(-1,-1){5.1}} \put(7.6,19.78){\line(0,-1){5}}
 \put(7.8,19.8){\line(1,-1){5.1}}
\put(2.2,14.5){\circle{0.5}} \put(7.6,14.5){\circle{0.5}}
\put(13,14.5){\circle{0.5}}

\put(20.7,19.8){\line(-1,-1){5.1}} \put(21.1,19.8){\line(1,-1){5.1}}
\put(15.4,14.5){\circle{0.5}} \put(26.4,14.5){\circle{0.5}}

\put(3.2,13.3){$v_{22}$} \put(8.5,13.3){$v_{23}$}
\put(12.6,13.3){$v_{24}$} \put(16.5,13.3){$v_{21}$}
\put(26.5,13.3){$v_{25}$} \put(29.5,13.3){$v_{26}$}
\put(40,13.3){$v_{27}$} \put(42.5,13.3){$v_{28}$}
\put(53,13.34){$v_{29}$}

 \put(34.8,19.8){\line(-1,-1){5.1}}
\put(35.2,19.8){\line(1,-1){5.1}} \put(29.5,14.5){\circle{0.5}}
\put(40.4,14.5){\circle{0.5}}

\put(48.2,19.8){\line(-1,-1){5.1}} \put(48.6,19.8){\line(1,-1){5.1}}
 \put(43,14.5){\circle{0.5}}
\put(53.8,14.5){\circle{0.5}}

\put(15.2,14.3){\line(-1,-2){2}}
 \put(15.6,14.3){\line(1,-2){2}}
 \put(13,10){\circle{0.5}}
\put(17.8,10){\circle{0.5}}

\put(7.6,14.25){\line(0,-1){3.8}}
 \put(7.6,10){\circle{0.5}}
 \put(12.5,8.7){$v_{31}$} \put(17.5,8.7){$v_{32}$} \put(7, 8.7){$v_{33}$}

 \put(25,5){\bf Figure~2}
          \end{picture}

Moreover, it is easy to see \cite{Biyikoglu2008} that there may be many BFS graphs for a given  graphic degree sequence.
Another importation notion is majorization.
  Let $x=(x_1, x_2,\cdots, x_p)$ and $y=(y_1,y_2,\cdots, y_p)$ be
  two nonnegative integers. We arrange the entries of $x$ and $y$
  in nonincreasing order $x_{\downarrow}=(x_{[1]},\cdots x_{[p]})$ and
  $y_{\downarrow}=(y_{[1]},\cdots, y_{[p]})$. If
  \begin{equation}\sum_{i=1}^kx_{[i]}\ge \sum_{i=1}^ky_{[i]},{\rm for }\ k=1,\cdots, p,
  \end{equation}
    $x$ is said to  {\it weakly majorize} $y$ and denoted $ x\triangleright^w y$ or $y\triangleleft^w
   x$. Further, if $y\triangleleft^w
   x$ and
\begin{equation}
\sum_{i=1}^px_{[i]}= \sum_{i=1}^py_{[i]},
\end{equation} $x$ is  said to
{\it majorize} $y$ and denoted by $ x \triangleright y$ or $y\triangleleft x$.
For details, the readers are  referred to the book of Marshall   and Olkin \cite{marshall1979}.
 It is well known that the following
 result holds (see \cite{erdos1960} or \cite{wei1982}).

 \begin{proposition}(\cite{erdos1960},\cite{wei1982})
 \label{prop}
 Let $\pi=(d_0, \cdots d_{n-1})$ and $\pi^{\prime}=(d_0^{\prime}, \cdots,
 d_{n-1}^{\prime})$ be two different nonincreasing graphic  degree sequences. If
 $\pi\triangleleft \pi^{\prime},$ then there exists a series
 graphic degree sequences  $\pi_1, \cdots, \pi_k$ such that
 $\pi \triangleleft \pi_1\triangleleft \cdots \triangleleft
\pi_k\triangleleft \pi^{\prime},$ and only two components of $\pi_i$
and $\pi_{i+1}$
  are different from 1.
  \end{proposition}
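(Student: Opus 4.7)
The plan is to induct on the majorization gap
\[
D(\pi,\pi') \;=\; \sum_{k=0}^{n-2}\Bigl(\sum_{i=0}^{k} d_i' - \sum_{i=0}^{k} d_i\Bigr),
\]
a nonnegative integer that vanishes if and only if $\pi=\pi'$. The inductive step will exhibit an intermediate graphic sequence $\pi^{*}$ that differs from $\pi'$ in exactly two coordinates (by $-1$ and $+1$), satisfies $\pi\triangleleft\pi^{*}\triangleleft\pi'$, and has $D(\pi,\pi^{*})<D(\pi,\pi')$; iterating produces the required chain.

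To pick the transfer, let $p$ be the smallest index with $d_p'>d_p$; this exists because $\pi\neq\pi'$ together with $\pi\triangleleft\pi'$ forces the first coordinate of disagreement to be of this sign. Let $q$ be the smallest index larger than $p$ with $d_q'<d_q$; this exists because the total sums of $\pi$ and $\pi'$ coincide. One then has $d_p'\ge d_p+1 \ge d_q+1 \ge d_q'+2$. To preserve the nonincreasing order after the transfer, I replace $p$ by the last index $\tilde p$ of its plateau in $\pi'$ and $q$ by the first index $\tilde q$ of its plateau in $\pi'$; short checks show $\tilde p<\tilde q$ and the inequalities $d_{\tilde p}'>d_{\tilde p}$ and $d_{\tilde q}'<d_{\tilde q}$ still hold. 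Now set $d_{\tilde p}^{*}=d_{\tilde p}'-1$, $d_{\tilde q}^{*}=d_{\tilde q}'+1$, and $d_i^{*}=d_i'$ for the other indices; the plateau-endpoint choice guarantees that $\pi^{*}$ remains nonincreasing.

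To confirm that $\pi^{*}$ is graphic, I invoke a single edge swap. Fix any realization $G'$ of $\pi'$ with vertex $u_i$ of degree $d_i'$. Since $d_{\tilde p}'\ge d_{\tilde q}'+2$, one has $|N_{G'}(u_{\tilde p})|>|N_{G'}(u_{\tilde q})\cup\{u_{\tilde q}\}|$, so there is a neighbour $v$ of $u_{\tilde p}$ with $v\neq u_{\tilde q}$ and $v\notin N_{G'}(u_{\tilde q})$. Deleting the edge $u_{\tilde p}v$ and adding the edge $u_{\tilde q}v$ yields a simple graph realizing $\pi^{*}$. The majorization $\pi^{*}\triangleleft\pi'$ is immediate from partial-sum comparison (the sums agree outside $[\tilde p,\tilde q-1]$ and drop by $1$ on that range), and $\pi\triangleleft\pi^{*}$ follows because the $\pi'$-versus-$\pi$ partial-sum slack is already at least $1$ throughout $[\tilde p,\tilde q-1]$, thanks to the contribution $d_p'-d_p\ge 1$ at index $p$ combined with $d_i'\ge d_i$ for $p<i<q$.

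The main obstacle is exactly the indexing in the preceding paragraph: a naive Robin Hood transfer applied at $(p,q)$ can destroy the nonincreasing order of $\pi^{*}$. Shifting to the plateau boundaries $(\tilde p,\tilde q)$ repairs this while keeping every other inequality needed for graphicness and for the majorization chain intact; once this bookkeeping is set up, the edge-swap construction and the partial-sum verifications are routine, and the induction closes.
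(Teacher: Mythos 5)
Your argument is correct. The paper itself states this proposition without proof, citing Erd\H{o}s--Gallai and Wei, and what you have written is essentially the classical argument from those sources: descend from $\pi'$ to $\pi$ by elementary (unit) transfers chosen at plateau boundaries so as to preserve monotonicity and stay majorization-sandwiched between $\pi$ and $\pi'$, and certify graphicness of each intermediate sequence by a single edge switch in a realization. All the key verifications are in place --- the existence of the indices $p<q$, the inequality $d_p'\ge d_q'+2$, the fact that the slack $\sum_{i\le k}(d_i'-d_i)\ge 1$ on $[\tilde p,\tilde q-1]$ because $[\tilde p,\tilde q-1]\subseteq[p,q-1]$, and the strict decrease of your potential $D$ --- so the induction closes.
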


 Let $G = (V,~E)$ be a simple graph with vertex set
$V(G)=\{v_1,\cdots, v_n\}$ and edge set $E(G)$. There are many matrices associated with
a graph.  Let $A(G)=(a_{ij})$ be the
$(0,1)$ {\it  adjacency matrix} of $G$, where $a_{ij}=1$ if $v_i$ and $v_j$ are adjacent, and 0 elsewhere.  The spectral radius of $A(G)$ is denoted by $\rho(G)$ and called the
{\it spectral radius} of $G$. Moreover, let $f$ be the unit eigenvector of $A(G)$ corresponding to $\rho(G)$. By the Perron-Frobenius theorem, $f$ is unique and positive. So $f$ is called  {\it Perron  vector} of $G$.
 In addition, denote by $d(v_i)$
the {\it degree} of vertex $v_i$ and  $D(G)=diag(d(u), u\in V)$
the diagonal matrix of vertex degrees of $G$, thus the matrix
$L(G)=D(G)-A(G)$ is called the {\it Laplacian matrix} of a graph
$G$. The spectral radius of $L(G)$ is equal to the largest eigenvalue of
$L(G)$  and denoted by $\lambda(G)$.  Moreover, $\lambda(G)$ is
called the {\it Laplacian spectral radius} of $G$. $\mathcal{L}(G)=D^{-1/2}L(G)D^{-1/2}$
is call the {\it normal Laplacian matrix} of a graph. The spectral radius of $\mathcal{L}(G)$ is denoted by $\mu(G)$ and called the {\it normal Laplacian spectral radius}.
In addition, the matrix $Q(G)=D(G)+A(G)$ is called the {\it signless Laplacian matrix} of $G$. The spectral radius of $Q(G)$ is denoted by $q(G)$ and called the {\it signless Laplacian spectra radius}. Let $g$ be the unit eigenvector of $Q(G)$ corresponding to eigenvalue $q(G)$ and be called {\it signless Laplacian Perron vector} of $G$.
The adjacency, Laplacian, normal Laplacian  and signless Laplacian matrices play a key role in spectral graph theory. The book `` Spectra of graph" \cite{Cvetkovicdoobsachs1995} by Cvetkovi\'{c}, Doob abd Sachs focuses on the adjacency matrix. The book `` Applications of combinatorial matrix theory to Laplacian matrices of graphs" \cite{Molitierno2012} by Molitierno  studies many properties of Laplacian matrices.  The book `` Spectral graph theory" \cite{chung1997} by  Chung focuses on normal Laplacian matrix. Moreover, there are several surveys on the topic of signless Lapacian matrices (see \cite{cvetkovic2007} and the references therein).

\section{Eigenvalues with given  tree degree sequences}

If a nonnegative sequence  $\pi=(d_0, \cdots, d_{n-1})$ is graphic  and $\sum_{i=0}^{n-1}d_i=2(n-1)$,  $\pi$ is called {\it tree degree sequence} and denoted by $\mathcal{T}_{\pi}$ the set of all trees having degree sequence $\pi$.

For a tree degree sequence $\pi=(d_0, d_1,\cdots,
d_{n-1}) $ with $d_0\ge d_1\ge\cdots\ge d_{n-1}$,  With the aid of breadth-first search
method, we can define a special tree $T_{\pi}^*$ with degree sequence $\pi$ as
follows. Assume that $d_m>1$ and $d_{m+1}=\cdots= d_{n-1}=1$ for $
0\le  m< n-1$. Put $s_0=0$. Select a vertex $v_{01}$ as a root and
begin with $v_{01}$ in layer 0. Put $s_1=d_0$ and select $s_1$
vertices $\{v_{11},\cdots, v_{1,s_1}\}$ in layer 1 such that they
are adjacent to $v_{01}$. Thus $d(v_{01})=s_1=d_0$. We continue to
construct all other layers by recursion. In general, put
$s_t=d_{s_0+s_1+\cdots+s_{t-2}+1}+\cdots+d_{s_0+s_1+\cdots
+s_{t-2}+s_{t-1}}-s_{t-1}$  for $t\ge 2$ and assume that all
vertices in layer $t$ have been constructed and are denoted by
$\{v_{t1},\cdots, v_{ts_t}\}$  with
$d(v_{t-1,1})=d_{s_0+\cdots+s_{t-2}+1},\cdots,
d(v_{t-1,s_{t-1}})=d_{s_0+\cdots+s_{t-1}}.$ Now using the induction
hypothesis, we construct all vertices in layer $t+1$. Put
$s_{t+1}=d_{s_0+\cdots+s_{t-1}+1}+\cdots+d_{s_0+\cdots +s_t}-s_t.$
Select $s_{t+1}$ vertices $\{ v_{t+1, 1},\cdots v_{t+1, s_{t+1}}\}$
in layer $t+1$ such that $v_{t+1,i}$ is adjacent to $v_{tr}$ for
 $r=1$ and $1\le i\le d_{s_0+\cdots+s_{t-1}+1}-1$ and for
$2\le r\le s_t$ and $d_{s_0+\cdots + s_{t-1}+1}+ d_{s_0+\cdots
+s_{t-1}+2}+\cdots+d_{ s_0+\cdots s_{t-1}+r-1}-r+2\le i\le
 d_{s_0+\cdots +s_{t-1}+1}+
d_{s_0+\cdots + s_{t-1}+2}+\cdots+d_{ s_0+\cdots +s_{t-1}+r}-r$.
Thus $d(v_{tr})=d_{s_0+\cdots+s_{t-1}+r}$ for $1\le r\le s_t$.
 Assume
that $m=s_0+\cdots +s_{p-1}+q$. Put $s_{p+1}=
d_{s_0+\cdots+s_{p-1}+1}+\cdots+d_{s_0+\cdots +s_{p-1}+q}-q$
 and select $s_{p+1}$
vertices $\{v_{p+1,1},\cdots, v_{p+1,s_{p+1}}\}$  in layer $p+1$
such that $v_{p+1,i}$ is adjacent to $v_{pr}$ for $1\le r\le q$
and $d_{s_0+\cdots + s_{t-1}+1}+\cdots+ d_{s_0+\cdots
+s_{p-1}+2}+\cdots+d_{ s_0+\cdots s_{p-1}+r-1}-r+2\le i\le
 d_{s_0+\cdots +s_{p-1}+1}+\cdots+
d_{s_0+\cdots + s_{p-1}+2}+\cdots+d_{ s_0+\cdots +s_{p-1}+r}-r$.
Thus $d(v_{p, i})=d_{s_0+\cdots+ s_{p-1}+i}$ for $1\le i\le q$. In
this way, we obtain a tree $T_{\pi}^*$. It is easy to see that $T_{\pi}^*$ has degree sequence $\pi$.
Moreover, this special tree $T_{\pi}^*$ is called {\it greedy tree} with a given  tree degree sequence $\pi$.
In  addition, it is easy to show the following assertion holds.

\begin{proposition}(\cite{Biyikoglu2008}, \cite{zhang2008})\label{BFRuni}
Given a tree degree sequence $\pi$,  there exists a
unique tree $T_{\pi}^*$ with degree  sequence $\pi$ having a BFS-ordering. In other words,
the greedy tree with $\pi$ is only tree having a BFS-ordering in the set $\mathcal{T}_{\pi}$.
Moreover, any two trees with the same degree sequences and having
BFS-ordering are isomorphic.
\end{proposition}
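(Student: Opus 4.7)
The plan is to split the proposition into two assertions and handle each separately: (a) the greedy tree $T_\pi^*$ admits a BFS-ordering; (b) any tree $T\in \mathcal{T}_\pi$ that admits a BFS-ordering is isomorphic to $T_\pi^*$. Part (a) gives existence, while part (b) simultaneously gives uniqueness of $T_\pi^*$ as a BFS-graph inside $\mathcal{T}_\pi$ and the final ``any two BFS-trees with the same degree sequence are isomorphic'' claim.

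For part (a), I would take the ordering in which the greedy construction enumerates the vertices, namely $v_{01}\prec v_{11}\prec\cdots\prec v_{1,s_1}\prec v_{21}\prec\cdots$, and verify Definition~\ref{BFS}. Condition (1) is immediate because vertices are enumerated layer by layer. Condition (2) is built in: in the construction the degree assigned to the $k$-th vertex in this enumeration is exactly $d_{k-1}$, and $\pi$ is non-increasing. Condition (3) holds because in the construction the children of $v_{t,r}$ occupy a consecutive block of layer $t+1$, and these blocks appear in the order $r=1,2,\ldots,s_t$; hence if $u=v_{t,r}\prec x=v_{t,r'}$ with $r<r'$ and if $v$, $y$ are children of $u$, $x$ respectively, then $v\prec y$.

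For part (b), suppose $T\in\mathcal{T}_\pi$ has a BFS-ordering $u_0\prec u_1\prec\cdots\prec u_{n-1}$. I would proceed by induction on the layer index $t$, showing at each stage that the induced subgraph on the vertices in layers $0,\ldots,t$ is isomorphic, via the order-preserving bijection $u_i\mapsto$ ($i$-th vertex in the greedy enumeration), to the corresponding portion of $T_\pi^*$. The base case $t=0$ follows from (1) and (2): $u_0$ has maximal height $0$ and maximal degree, hence $d(u_0)=d_0$. For the step, the inductive hypothesis identifies layers $0,\ldots,t$ with those of $T_\pi^*$ and in particular identifies the vertex $u=v_{t,r}$ in both trees. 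Condition (3) now forbids any ``interleaving'' of the children of distinct layer-$t$ vertices: for $u\prec x$ in layer $t$, every layer-$(t+1)$ neighbour of $u$ precedes every layer-$(t+1)$ neighbour of $x$. Consequently layer $t+1$ is partitioned into consecutive blocks of sizes $d(v_{t,1})-1,d(v_{t,2})-1,\ldots,d(v_{t,s_t})-1$, and condition (2) forces the degrees read along $u_0,u_1,\ldots$ to continue to agree with $\pi$ entry by entry. This matches the greedy assignment in $T_\pi^*$ exactly, closing the induction.

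The main obstacle is the careful use of (3) in the inductive step, because a priori nothing prevents a child of a later layer-$t$ vertex from being listed before a child of an earlier one; I would handle this by applying (3) to each pair $(u,x)$ in layer $t$ with $u\prec x$ simultaneously, obtaining the block structure on layer $t+1$. A secondary subtlety is at the boundary between consecutive blocks: one must check that the degree of the first child of $v_{t,r+1}$ is at most the degree of the last child of $v_{t,r}$. But this is precisely (2) applied to these two consecutive vertices in the BFS-ordering, combined with the fact that the degrees listed so far form an initial segment of $\pi$, which is non-increasing. Once these two points are handled, the inductive isomorphism $u_i\mapsto v_{\cdot,\cdot}$ is immediate, yielding both existence and uniqueness up to isomorphism.
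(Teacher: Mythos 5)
Your proof is correct and takes the approach that the cited sources \cite{Biyikoglu2008,zhang2008} use (the survey itself states the proposition without proof): existence via checking that the greedy enumeration satisfies (1)--(3), and uniqueness via layer-by-layer induction in which condition (3) forces the children of successive layer-$t$ vertices into consecutive blocks. The only simplification worth noting is that your ``boundary between blocks'' worry is subsumed by observing that condition (2) applied to the entire well-ordering already forces $d(u_i)=d_i$ for every $i$, since the degree sequence read along the ordering is a non-increasing permutation of $\pi$ and hence equals $\pi$ entry by entry.
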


\subsection{The  (Laplacian, signless Laplacian) spectral radius}
Biyiko\u{g}lu and Leydold \cite{Biyikoglu2008}characterized all extremal trees having the largest spectral radius in the set $\mathcal{T}_{\pi}$ consisting  of all trees with a given  degree sequence $\pi$. While, Zhang \cite{zhang2008}  characterized all extremal trees having the largest (signless) Laplacian spectral radius in the set $\mathcal{T}_{\pi}$ consisting  of all trees with a given degree sequence $\pi$, since the signless Laplacian radius of a tree $T$  is always equal to its the Laplacian spectral radius.  It is not a surprise that the two extremal graphs are coincident.
\begin{theorem}(\cite{Biyikoglu2008}, \cite{zhang2008})\label{adjeigupp}
For a given  tree degree sequence $\pi$, the greedy tree $T_{\pi}^*$ is the only tree having
the largest spectral radius (resp. signless Laplacian radius) in $\mathcal{T}_{\pi}$. Moreover,
 The BFS-ordering of the greedy tree  is consistent with the Perron vector $f$ of $ G$ in such a way that $f(u) > f(v)$  (resp. $g(u)>g(v)$) implies $u\prec v.$
\end{theorem}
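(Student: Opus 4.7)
The plan is to take any tree $T\in\mathcal{T}_\pi$ attaining the maximum spectral radius and exhibit a well-ordering of its vertices that is simultaneously a BFS-ordering and compatible with its Perron vector $f$. By Proposition \ref{BFRuni} this forces $T\cong T_\pi^*$, and the compatibility is then precisely the ``Moreover'' clause. The signless Laplacian case is parallel: bipartiteness of $T$ gives $q(T)=\lambda(T)$ and, more importantly, allows the same rearrangement argument to be run with the signless Laplacian Perron vector $g$ in place of $f$, so I focus the discussion on the adjacency case.

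The main engine is a Kelmans-type edge-switching inequality. If $uv,xy\in E(T)$, $uy,xv\notin E(T)$, and one forms
\begin{equation*}
T'=T-\{uv,xy\}+\{uy,xv\},
\end{equation*}
then, since $f$ is a unit vector, the Rayleigh quotient gives
\begin{equation*}
\rho(T')\ge f^{T}A(T')f=\rho(T)+2\bigl(f(u)-f(x)\bigr)\bigl(f(y)-f(v)\bigr).
\end{equation*}
Hence whenever $f(u)\ge f(x)$ and $f(y)\ge f(v)$ we obtain $\rho(T')\ge\rho(T)$, with strict inequality if both differences are nonzero. In the tree setting a raw edge-swap may disconnect $T$, so in practice one performs the switch as a subtree transfer: one detaches the subtree rooted at the ``lower'' end of an edge and reattaches it at a vertex of larger $f$-value. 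Such a move preserves the degree sequence (hence $T'\in\mathcal{T}_\pi$) and still falls under the inequality above.

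Concretely, I would first root $T$ at a vertex $v_{01}$ of maximum $f$-value and order the vertices $v_{01}\prec v_{02}\prec\cdots$ by decreasing $f$-value, breaking ties by degree. Then I would verify the three BFS-conditions of Definition \ref{BFS}. For condition (1), if some vertex $v$ with $f(v)<f(u)$ satisfies $h(v)<h(u)$, a subtree transfer that moves $u$'s branch closer to the root produces a tree in $\mathcal{T}_\pi$ of strictly larger spectral radius, contradicting extremality. For condition (2), suppose $f(u)>f(v)$ but $d(u)<d(v)$; transfer a pendant subtree hanging off $v$ (chosen away from the $u$--$v$ path) so that it becomes attached to $u$. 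The resulting tree lies in $\mathcal{T}_\pi$ and the switching inequality gives a strict increase of $\rho$. For condition (3), one applies the switch directly to the two edges $uv$ and $xy$ specified there: the hypotheses $u\prec x$ together with $y\prec v$ (the negation of the conclusion) give $f(u)\ge f(x)$ and $f(y)>f(v)$, again contradicting maximality.

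The main obstacle is twofold: (i) arranging each swap so that the resulting graph remains a connected tree with the prescribed degree sequence, and (ii) handling the equality cases of the Rayleigh-quotient inequality carefully, because uniqueness, and not merely maximality, is claimed. Point (i) is precisely why one transfers whole subtrees rather than naive pairs of edges; point (ii) is addressed by pushing the equality case through the eigenvalue equation $\rho(T)f(w)=\sum_{w'\sim w}f(w')$ to conclude that equal Perron entries at the swapped vertices force a graph isomorphism rather than a genuinely different extremizer. Once the three BFS-conditions are established for the Perron ordering, Proposition \ref{BFRuni} identifies $T$ with $T_\pi^*$, and the consistency $f(u)>f(v)\Rightarrow u\prec v$ (respectively $g(u)>g(v)\Rightarrow u\prec v$) holds by construction of the ordering.
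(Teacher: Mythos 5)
Your overall strategy---take an extremal tree $T$, order its vertices by decreasing Perron value, verify the three conditions of Definition~\ref{BFS} by switching arguments, and then invoke Proposition~\ref{BFRuni} to identify $T$ with $T_{\pi}^*$---is exactly the route taken in the cited sources (B{\i}y{\i}ko\u{g}lu--Leydold for the adjacency matrix, Zhang for the Laplacian and signless Laplacian); the survey itself states the theorem without proof, so those are the right benchmark, and your displayed Rayleigh-quotient identity for the two-edge switch is correct.

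The concrete problem is your claim that detaching a single subtree from $v$ and reattaching it at $u$ ``preserves the degree sequence (hence $T'\in\mathcal{T}_\pi$).'' It does not: such a move is the one-edge modification $T-vw+uw$, which raises $d(u)$ by one and lowers $d(v)$ by one, so the new tree lies in $\mathcal{T}_\pi$ only in the special case $d(v)=d(u)+1$. This breaks your verification of condition (2) as written, and the genuine two-edge switch $T-\{uv,xy\}+\{uy,xv\}$ cannot substitute for it there, since a two-edge switch fixes every vertex degree and hence cannot repair a violation of $d(u)\ge d(v)$. The actual proofs exchange the degrees of $u$ and $v$: they move exactly $d(v)-d(u)$ branches of $v$ (all chosen off the $u$--$v$ path) over to $u$, so that afterwards $u$ has degree $d(v)$, $v$ has degree $d(u)$, and the multiset $\pi$ is restored; each individual move changes $f^{T}Af$ by $2f(w)\bigl(f(u)-f(v)\bigr)>0$ by positivity of the Perron vector, giving the contradiction. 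The same care is needed in your step for condition (1), where the correct degree-preserving move is a swap of two disjoint subtrees (one hanging at each of the two heights), not a single reattachment; in the original arguments condition (1) is in fact obtained by first proving that $f$ is non-increasing along paths from the root. Finally, the uniqueness claim---which is where most of the work in the cited papers lies---is only gestured at in your last paragraph; ``pushing the equality case through the eigenvalue equation'' needs to be carried out, since a priori a switch with $\bigl(f(u)-f(x)\bigr)\bigl(f(y)-f(v)\bigr)=0$ could produce a non-isomorphic tree with the same spectral radius.
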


\begin{theorem}(\cite{Biyikoglu2008}, \cite{zhang2008})\label{majoreig}
Let $\pi$ and $\tau$ be two different tree degree sequences
with the same order. Let $T_{\pi}^*$ and $T_{\tau}^*$ have the largest spectral radii (resp. Laplacian, signless Laplacian spectral radii) in ${\mathcal T}_{ \pi}$ and ${\mathcal
T}_{\tau}$, respectively.
 If
$\pi\triangleleft \tau$, then
$\rho(T_{\pi}^*)<\rho(T_{\tau}^*)$, $\lambda(T_{\pi}^*)<\lambda(T_{\tau}^*)$  and  $q(T_{\pi}^*)<q(T_{\tau}^*)$.
\end{theorem}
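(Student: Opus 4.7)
The plan is to reduce the general majorization relation to the elementary two-component case via Proposition~\ref{prop}, and to compare the spectral radii of successive greedy trees by constructing an edge-switching bridge between them.

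First, by Proposition~\ref{prop} it suffices to treat the case in which $\pi$ and $\tau$ differ in exactly two coordinates: say $d_i' = d_i+1$, $d_j' = d_j-1$, and $d_k' = d_k$ for $k \neq i,j$, where $i<j$. Since both sequences are tree degree sequences of order $n$, every entry is at least $1$, so in particular $d_j \ge 2$. Label the vertices of $T_\pi^*$ by its BFS-ordering $v_0,v_1,\ldots,v_{n-1}$ (so $d(v_k)=d_k$), and construct an auxiliary tree $T'\in\mathcal{T}_\tau$ from $T_\pi^*$ by a single edge switch. Let $w$ be the unique neighbor of $v_j$ on the $v_j$--$v_i$ path in $T_\pi^*$, and pick any $u\in N(v_j)\setminus\{w\}$, nonempty since $d_j\ge 2$. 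Using the tree structure one checks that $u\neq v_i$, that $uv_i\notin E(T_\pi^*)$, and that $T':=T_\pi^*-v_ju+v_iu$ is again a tree, whose degree sequence is exactly $\tau$. In particular $T'\in\mathcal{T}_\tau$, so $\rho(T_\tau^*)\ge \rho(T')$ by Theorem~\ref{adjeigupp}, and analogously for the signless Laplacian spectral radius.

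The main step is then a Rayleigh-quotient comparison. Let $f$ be the unit positive Perron vector of $A(T_\pi^*)$. Because $A(T')-A(T_\pi^*)$ has support only on the entries $(v_i,u),(u,v_i),(v_j,u),(u,v_j)$,
\[
\rho(T')\ \ge\ f^{T}A(T')f\ =\ \rho(T_\pi^*)+2f(u)\bigl(f(v_i)-f(v_j)\bigr).
\]
Theorem~\ref{adjeigupp} asserts that the BFS-ordering is consistent with $f$, so $i<j$ forces $f(v_i)\ge f(v_j)$ and the right-hand side is at least $\rho(T_\pi^*)$. Suppose equality held throughout; then $f$ would also be a Perron eigenvector of $T'$ at the same eigenvalue, so $(A(T')-A(T_\pi^*))f=0$, but its $v_i$-coordinate equals $f(u)>0$, a contradiction. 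Hence $\rho(T_\pi^*)<\rho(T')\le \rho(T_\tau^*)$. The signless Laplacian case runs identically with the Perron vector $g$ of $Q(T_\pi^*)$ in place of $f$; the extra diagonal contribution only adds the nonnegative term $g(v_i)^2-g(v_j)^2$, giving
\[
g^{T}Q(T')g-g^{T}Q(T_\pi^*)g\ =\ (g(v_i)-g(v_j))\bigl(g(v_i)+g(v_j)+2g(u)\bigr)\ \ge\ 0,
\]
strictly by the same Perron-uniqueness argument. Finally, since every tree is bipartite, $L(T)$ and $Q(T)$ are cospectral via a signed diagonal conjugation, so $\lambda(T_\pi^*)=q(T_\pi^*)<q(T_\tau^*)=\lambda(T_\tau^*)$.

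The main obstacle is the bookkeeping of the edge switch: one must choose $u$ so that $T'$ is still a simple connected graph with exactly the two prescribed degree changes, and this hinges on the tree fact that $N(v_i)\cap N(v_j)\subseteq\{w\}$. Once this local surgery is justified, the BFS/Perron alignment supplied by Theorem~\ref{adjeigupp} together with the Rayleigh-quotient estimate converts the elementary majorization step into a strict inequality of spectral radii, and iterating along the chain furnished by Proposition~\ref{prop} completes the proof.
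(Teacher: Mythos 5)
Your argument is correct, and it is essentially the approach behind this theorem in the cited sources \cite{Biyikoglu2008} and \cite{zhang2008} (the survey itself states the result without proof): reduce to a unit transfer via Proposition~\ref{prop}, perform a single edge switch from $T_\pi^*$ to a tree with degree sequence $\tau$, and combine the Rayleigh-quotient estimate with the Perron/BFS consistency of Theorem~\ref{adjeigupp} to get strictness, then pass from $Q$ to $L$ by bipartiteness. The only point worth making explicit is that the intermediate sequences in the chain from Proposition~\ref{prop} are again tree degree sequences, which holds because each is majorized by $\tau$ (so its minimum entry is at least $1$) and has sum $2(n-1)$.
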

 With the aid of Theorems~\ref{adjeigupp} and \ref{majoreig},
   we may deduce extremal
graphs with the largest  (resp. signless Laplacian) spectral  radius in some class of
graphs.
\begin{corollary}(\cite{Biyikoglu2008},\cite{zhang2008})\label{leaves}
Let  ${\mathcal T}_{n, s}^{(1)}$ be the set of
all trees of order $n$ with $s$ leaves and let $T_{\pi}^*$ be the greedy tree  with $\pi=(t, 2, \cdots, 2, 1, \cdots, 1)$ (i.e., if $n-1=sq+t, 0\le t<s$, then $T_{\pi}^*$ is obtained from $t$ paths of order
$q+2$ and $s-t$ paths of order $q+1$ by identifying one end of the
$s$ paths.). Then   $\rho(T)\le \rho(T_{\pi}^*)$  (resp. $\lambda(T)\le \lambda(T_{\pi}^*)$, $q(T)\le q(T_{\pi}^*)$) for any tree $T\in {\mathcal
T}_{n, s}^{(1)}$, with equality if and only if $T=T_{\pi}^*$.
\end{corollary}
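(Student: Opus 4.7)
The plan is to reduce this corollary to the majorization theorem (Theorem~\ref{majoreig}). Partition $\mathcal{T}_{n,s}^{(1)}$ by degree sequence as $\mathcal{T}_{n,s}^{(1)} = \bigcup_\tau \mathcal{T}_\tau$, where $\tau$ runs over tree degree sequences on $n$ vertices with exactly $s$ entries equal to $1$. By Theorem~\ref{adjeigupp}, the greedy tree $T_\tau^*$ is the unique maximizer of $\rho$, $\lambda$, and $q$ inside each $\mathcal{T}_\tau$, so it suffices to exhibit one tree degree sequence $\pi^*$ in the class that majorizes every other $\tau$ in the class; Theorem~\ref{majoreig} will then deliver $\rho(T_\tau^*) < \rho(T_{\pi^*}^*)$ and analogous strict inequalities for $\lambda$ and $q$, for every $\tau \ne \pi^*$.

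The candidate is $\pi^* = (s, 2, 2, \ldots, 2, 1, 1, \ldots, 1)$, with one entry equal to $s$, then $n-s-1$ twos, then $s$ ones; its entries sum to $2(n-1)$. To show $\tau \triangleleft \pi^*$, I would verify the partial-sum comparison. First, in any tree with $s$ leaves the maximum degree is at most $s$: deleting a vertex $v$ of degree $d$ yields $d$ subtrees, each of which contains at least one leaf of the original tree, so $d \le s$. Second, for any tree degree sequence $\tau = (d_{[1]} \ge \cdots \ge d_{[n]})$ with $s$ ones and for $1 \le k \le n-s-1$,
\begin{equation*}
\sum_{i=1}^k d_{[i]} \;=\; 2(n-1) - \sum_{i=k+1}^n d_{[i]} \;\le\; 2(n-1) - \bigl[2(n-s-k) + s\bigr] \;=\; s + 2(k-1),
\end{equation*}
since the last $s$ entries equal $1$ and the intermediate $n-s-k$ entries are each at least $2$; and $s + 2(k-1) = \sum_{i=1}^k \pi^*_{[i]}$. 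For $k \ge n-s$ both partial sums equal $n+k-2$ automatically, and the totals coincide at $k=n$, so $\tau \triangleleft \pi^*$.

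It remains to check that the greedy tree $T_{\pi^*}^*$ is the spider described in the statement. Running the BFS construction on $\pi^*$, the root has degree $s$, layer~$1$ consists of its $s$ children, and each subsequent non-leaf has degree~$2$, so each child grows a single path of degree-$2$ vertices downward. With $n-1 = sq+t$ and $0 \le t < s$, the greedy rule distributes the $n-s-1$ interior vertices by giving one extra vertex to each of the first $t$ branches, producing exactly $t$ paths of order $q+2$ and $s-t$ paths of order $q+1$ meeting at the common endpoint. Combining with Theorems~\ref{adjeigupp} and~\ref{majoreig} then yields the claimed extremality and uniqueness for $\rho$, $\lambda$, and $q$ simultaneously. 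The delicate step is the partial-sum bookkeeping at the boundary $k = n-s$; once this is in place there is no further analytic content beyond the two cited theorems.
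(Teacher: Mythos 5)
Your proof is correct and follows the same route the paper intends: the corollary is derived directly from Theorems~\ref{adjeigupp} and~\ref{majoreig} by observing that the spider's degree sequence $(s,2,\dots,2,1,\dots,1)$ majorizes every tree degree sequence with exactly $s$ entries equal to $1$, which is exactly your partial-sum computation. You also correctly read the paper's ``$\pi=(t,2,\dots,2,1,\dots,1)$'' as having first entry $s$ (the centre of the spider has degree $s$), and your identification of the greedy tree for this sequence with the balanced spider matches the construction of $T_{\pi}^*$.
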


\begin{corollary}(\cite{zhang2008})\label{maxdegree}
Let ${\mathcal
T}_{n,\Delta}^{(2)}$ be the set of all trees of order $n$ with the
 maximum degree $\Delta\ge 3$ and let $T_{\pi}^*$  be the greedy tree with $\pi$ which is defined as
follows:
 Denote
$p=\lceil\log_{(\Delta-1)}\frac{n(\Delta-2)+2}{\Delta}\rceil-1$ and
$n-\frac{\Delta(\Delta-1)^p-2}{\Delta-2}=(\Delta-1)r+q$ for $0\le
q<\Delta-1$. If $q=0$, put $\pi=(\Delta,\cdots,\Delta,1,
\cdots,1)$ with the number
$\frac{\Delta(\Delta-1)^{p-1}-2}{\Delta-2}+r$ of degree $\Delta$. If
$1\le q$, put $\pi=(\Delta,\cdots,\Delta,q, 1,\cdots,1)$ with the
number $\frac{\Delta(\Delta-1)^{p-1}-2}{\Delta-2}+r$ of degree
$\Delta$. Then $\rho(T)\le \rho(T_{\pi}^*)$  (resp. $\lambda(T)\le \lambda(T_{\pi}^*)$ , $q(T)\le q(T_{\pi}^*)$) for any tree $T\in {\mathcal
T}_{n, \Delta}^{(2)}$, with equality if and only if $T=T_{\pi}^*$.
\end{corollary}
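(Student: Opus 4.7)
The plan is to reduce Corollary~\ref{maxdegree} to the majorization machinery already established. Each $T\in\mathcal{T}_{n,\Delta}^{(2)}$ has a tree degree sequence $\tau$ of order $n$ with $\tau_0=\Delta$, and by Theorem~\ref{adjeigupp} we have $\rho(T)\le\rho(T_\tau^*)$ (analogously for $\lambda$ and $q$), with equality if and only if $T=T_\tau^*$. Hence the corollary follows as soon as we show that the prescribed $\pi$ is the unique majorization-maximum tree degree sequence of order $n$ with first entry at most $\Delta$; for then any other $\tau$ satisfies $\tau\triangleleft \pi$, and Theorem~\ref{majoreig} yields $\rho(T_\tau^*)<\rho(T_\pi^*)$, $\lambda(T_\tau^*)<\lambda(T_\pi^*)$, and $q(T_\tau^*)<q(T_\pi^*)$.

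First I would verify that $\pi$ is a valid tree degree sequence. Abbreviate $N_p=(\Delta(\Delta-1)^p-2)/(\Delta-2)$, which is the order of the complete $\Delta$-ary tree of depth $p$. The identity $N_p-N_{p-1}=\Delta(\Delta-1)^{p-1}$ together with the defining relation $n-N_p=(\Delta-1)r+q$ lets one compute $\sum_i\pi_i=2(n-1)$ directly in both cases $q=0$ and $q\ge 1$. Since every nonincreasing positive integer sequence of length $n$ summing to $2(n-1)$ is realizable as a tree, $\pi$ lies in the feasible set.

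The main obstacle is showing majorization-maximality of $\pi$. I would proceed by partial-sum comparison: let $k=N_{p-1}+r$ denote the number of $\Delta$-entries in $\pi$, and let $\tau=(t_0,\dots,t_{n-1})$ be any competing sequence with $t_0\le\Delta$. For $j\le k-1$ we have $\sum_{i=0}^{j}\pi_i=(j+1)\Delta\ge\sum_{i=0}^{j}t_i$, since each $t_i\le\Delta$. For $j\ge k$ it is cleaner to use the equivalent tail inequality $\sum_{i=j+1}^{n-1}t_i\ge\sum_{i=j+1}^{n-1}\pi_i$, which holds because the tail of $\pi$ beyond the $\Delta$-block consists almost entirely of $1$'s (plus possibly the single intermediate entry), while every $t_i\ge 1$; the arithmetic built into $p$, $r$, $q$ is exactly what guarantees that this tail attains the minimal value $1$ as early as possible. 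The interpretation that $T_\pi^*$ is the ``as-balanced-as-possible'' complete $\Delta$-ary tree on $n$ vertices---the complete $\Delta$-ary tree of depth $p$ with an additional $n-N_p$ leaves packed greedily at depth $p+1$---motivates the intricate definition of $\pi$ and makes the majorization transparent: any competing sequence either reduces the number of $\Delta$'s at the top or shifts weight to strictly later positions. The hardest technical point is checking that no ``rebalancing'' of the tail across several positions can beat $\pi$, which is handled by induction using Proposition~\ref{prop}: if $\tau\ne\pi$, a finite chain of majorization steps moves $\tau$ strictly up toward $\pi$ without ever exceeding the cap $d_0\le\Delta$, so $\tau\triangleleft\pi$ strictly.

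Combining these ingredients, any $T\in\mathcal{T}_{n,\Delta}^{(2)}$ with $T\ne T_\pi^*$ either has degree sequence $\pi$ (in which case Theorem~\ref{adjeigupp} gives $\rho(T)<\rho(T_\pi^*)$) or has a different degree sequence $\tau\triangleleft\pi$ (in which case Theorems~\ref{adjeigupp} and~\ref{majoreig} combine to give $\rho(T)\le\rho(T_\tau^*)<\rho(T_\pi^*)$). The identical argument works for $\lambda(G)$ and $q(G)$, completing the proof.
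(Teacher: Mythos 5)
Your proposal is correct and follows essentially the same route the paper intends: the survey obtains this corollary by combining Theorem~\ref{adjeigupp} with Theorem~\ref{majoreig} after checking that the prescribed $\pi$ majorizes every tree degree sequence of order $n$ with all entries at most $\Delta$, which is exactly your partial-sum argument. Two minor remarks: your degree-sum verification in the case $q\ge 1$ in fact shows the intermediate entry must be $q+1$ rather than $q$ as printed (otherwise the sum is $2n-3$), and the closing appeal to Proposition~\ref{prop} is superfluous, since your direct partial-sum comparison already yields $\tau\triangleleft\pi$ without any chain argument.
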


\begin{corollary}(\cite{zhang2004},\cite{zhang2008})\label{inde}
Let ${\mathcal T}_{n,\alpha}^{(3)}$ be the set
of all trees of order $n$ with the independence number $\alpha$  and let $T_{\pi}^*$  be the greedy tree with $\pi=(\alpha, 2,\cdots,2,1,\cdots,1)$ the numbers $n-\alpha-1$ of 2 and  $\alpha$ of 1.
 Then $\rho(T)\le \rho(T_{\pi}^*)$  (resp. $\lambda(T)\le \lambda(T_{\pi}^*)$ , $q(T)\le q(T_{\pi}^*)$) for any tree $T\in {\mathcal
T}_{n, \alpha}^{(3)}$, with equality if and only if $T=T_{\pi}^*$.
  \end{corollary}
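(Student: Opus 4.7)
The plan is to reduce the corollary to Theorems~\ref{adjeigupp} and~\ref{majoreig} via a single majorization fact on degree sequences: for every $T\in\mathcal{T}_{n,\alpha}^{(3)}$, the degree sequence $\sigma(T)=(d_0,\ldots,d_{n-1})$ is majorized by $\pi=(\alpha,2,\ldots,2,1,\ldots,1)$. Once this is in hand, the corollary follows in two cases. If $\sigma(T)=\pi$, then $T\in\mathcal{T}_\pi$ and Theorem~\ref{adjeigupp} yields $\rho(T)\le\rho(T_\pi^*)$ (and the analogous inequalities for $\lambda$ and $q$) with equality only when $T=T_\pi^*$. If $\sigma(T)\ne\pi$, let $T^{\star}$ be the greedy tree with degree sequence $\sigma(T)$: Theorem~\ref{adjeigupp} gives $\rho(T)\le\rho(T^{\star})$ and Theorem~\ref{majoreig} gives $\rho(T^{\star})<\rho(T_\pi^*)$, so $\rho(T)<\rho(T_\pi^*)$ strictly, and similarly for $\lambda$ and $q$.

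Two inputs coming from the independence number feed the majorization. First, the neighborhood of any maximum-degree vertex of a tree is independent, so $\Delta(T)\le\alpha$ and hence $d_0\le\alpha$. Second, the set of leaves is independent for $n\ge 3$, so the number of leaves $\ell(T)$ is at most $\alpha$. Combined with the handshake identity $\sum_i d_i=2(n-1)$, a short count yields the bookkeeping identity
\[
\sum_{i:\,d_i\ge 2}(d_i-2)\;=\;\ell(T)-2\;\le\;\alpha-2.
\]

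I would then verify $\sigma(T)\triangleleft\pi$ by comparing the partial sums $S_k=\sum_{i=0}^k d_i$ against those of $\pi$ in three ranges. The case $k=0$ is just $d_0\le\alpha$; the case $k\ge n-\alpha$ follows from $d_i\ge1$, since the discarded tail weighs at least $n-1-k$, giving $S_k\le n-1+k$ which matches the corresponding partial sum of $\pi$. The main technical step is the middle range $1\le k\le n-\alpha-1$: the leaf bound $\ell(T)\le\alpha$ forces $k+1\le n-\ell(T)$, hence $d_k\ge 2$ throughout, so every summand of $\sum_{i=0}^k(d_i-2)$ is nonnegative and is bounded by the total excess $\ell(T)-2\le\alpha-2$, giving $S_k\le\alpha+2k$. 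Because $\sigma(T)$ and $\pi$ share the total $2(n-1)$, this establishes $\sigma(T)\triangleleft\pi$, and equality of sequences forces $d_0=\alpha$, $\ell(T)=\alpha$, and every non-leaf other than the maximum-degree vertex having degree exactly~$2$.

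The main obstacle is precisely this middle-range verification, which requires invoking the degree bound and the leaf bound simultaneously; once one notices the identity $\sum_{d_i\ge 2}(d_i-2)=\ell-2$, the accounting collapses to a single inequality. The remainder is a direct application of Theorems~\ref{adjeigupp} and~\ref{majoreig}, exactly following the template already used for the preceding Corollaries~\ref{leaves} and~\ref{maxdegree}.
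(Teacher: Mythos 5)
Your proposal is correct and follows exactly the route the paper intends for this corollary: show that the degree sequence of every tree in ${\mathcal T}_{n,\alpha}^{(3)}$ is majorized by $\pi=(\alpha,2,\ldots,2,1,\ldots,1)$ and then apply Theorems~\ref{adjeigupp} and~\ref{majoreig}, and your middle-range partial-sum bound via the identity $\sum_{d_i\ge 2}(d_i-2)=\ell(T)-2\le\alpha-2$ together with $d_0\le\alpha$ is a valid way to obtain that majorization. The only detail you leave implicit is that $T_{\pi}^*$ itself lies in ${\mathcal T}_{n,\alpha}^{(3)}$ (needed for the ``if'' direction of the equality case); this holds because $\alpha\ge\lceil n/2\rceil$ for any tree, so the greedy tree is a spider whose legs have length at most $2$ and whose independence number is exactly $\alpha$.
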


\begin{corollary}(\cite{guo2003}, \cite{zhang2008})\label{match}
 Let ${\mathcal T}_{n,\beta}^{(4)}$ be the set of all trees of order $n$
with the matching number $\beta$ and $T_{\pi}^*$  be the greedy tree with $\pi=(n-\beta, 2,\cdots,2,1,\cdots,1)$ and the  number $n-\beta$
of 1. Then $\rho(T)\le \rho(T_{\pi}^*)$  (resp. $\lambda(T)\le \lambda(T_{\pi}^*)$ , $q(T)\le q(T_{\pi}^*)$) for any tree $T\in {\mathcal
T}_{n, \beta}^{(4)}$, with equality if and only if $T=T_{\pi}^*$.
  \end{corollary}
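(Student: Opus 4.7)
\medskip
\noindent\textbf{Proof proposal.} The plan is to derive this corollary directly from Corollary~\ref{inde} via K\"onig's theorem, avoiding any new degree-sequence analysis.

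First, I would observe that every tree on $n\ge 2$ vertices is bipartite and has no isolated vertex, so K\"onig's theorem together with the Gallai identity $\alpha+\tau=n$ (where $\tau$ is the vertex cover number) gives $\alpha(T)+\beta(T)=n$ for every such tree. In particular, $T\in\mathcal{T}_{n,\beta}^{(4)}$ if and only if $T\in\mathcal{T}_{n,n-\beta}^{(3)}$.

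Next, I would check that the two extremal sequences coincide. Substituting $\alpha=n-\beta$ into the sequence of Corollary~\ref{inde} yields $(n-\beta,2,\ldots,2,1,\ldots,1)$ with $n-\alpha-1=\beta-1$ entries equal to $2$ and $\alpha=n-\beta$ entries equal to $1$, which is exactly the sequence $\pi$ in the present corollary. Consequently $T_\pi^*$ refers to the same greedy tree in both statements, and invoking Corollary~\ref{inde} yields $\rho(T)\le\rho(T_\pi^*)$, $\lambda(T)\le\lambda(T_\pi^*)$, and $q(T)\le q(T_\pi^*)$ with equality iff $T=T_\pi^*$, as required.

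I do not anticipate any serious obstacle; the whole argument is essentially a one-line reduction via K\"onig's theorem. Had Corollary~\ref{inde} not been available, I would instead prove the majorization $\pi(T)\triangleleft^{w}\pi$ directly and then invoke Theorems~\ref{adjeigupp} and~\ref{majoreig}; the technical heart of that alternative route would be the inequality $\Delta(T)\le n-\beta$, which follows because the leaves of a tree of order at least $3$ form an independent set, so $\Delta(T)\le\ell(T)\le\alpha(T)=n-\beta$, together with the analogous partial-sum bounds $\sum_{i=0}^{r}d_i(T)\le n-\beta+2r$ for $1\le r\le\beta-1$, the upper bounds for $r\ge\beta$ being automatic from $d_{n-1}\ge 1$.
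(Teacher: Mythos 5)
Your reduction is correct: for every tree $T$ of order $n\ge 2$, K\"onig's theorem ($\beta=\tau$ for bipartite graphs) combined with Gallai's identity $\alpha+\tau=n$ gives $\alpha(T)=n-\beta(T)$, so ${\mathcal T}_{n,\beta}^{(4)}={\mathcal T}_{n,n-\beta}^{(3)}$, and your bookkeeping confirming that the sequence of Corollary~\ref{inde} with $\alpha=n-\beta$ is exactly $(n-\beta,2,\ldots,2,1,\ldots,1)$ with $\beta-1$ twos and $n-\beta$ ones is right, so the statement is literally Corollary~\ref{inde} restated. This is a genuinely different route from the one the survey intends: the paper presents all four corollaries as consequences of Theorems~\ref{adjeigupp} and~\ref{majoreig}, i.e.\ one shows directly that the degree sequence of any $T\in{\mathcal T}_{n,\beta}^{(4)}$ is majorized by $\pi$, that $T_{\pi}^*$ itself lies in the class, and then chains $\rho(T)\le\rho(T^*_{\pi(T)})<\rho(T^*_{\pi})$ (and likewise for $\lambda$ and $q$); your fallback sketch is essentially that argument, and there you should note explicitly that the weak majorization upgrades to majorization because both sequences sum to $2n-2$, which is what Theorem~\ref{majoreig} actually requires. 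What your primary route buys is brevity and the automatic inheritance of the membership check $T_\pi^*\in{\mathcal T}_{n,\beta}^{(4)}$ from Corollary~\ref{inde}; what it costs is self-containedness, since it only works because the independence-number case happens to have been established first, whereas the majorization template treats all four corollaries (leaves, maximum degree, independence number, matching number) uniformly.
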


On the minimum (Laplacian, signless Laplacian) spectral radius of  with a given tree degree sequence, it seems to be more difficult. The related results are referred to \cite{Biyikoglu2010}.

\subsection{The spectral radius of p-Laplacian of weighted trees}
 Now we turn to consider p-Laplacian eigenvalues of weighted graphs.
Let $G^W=(V(G), E(G), W(G))$ be a {\it weighted graph}
with vertex set $V(G)=\{v_0, v_1, \cdots, v_{n-1}\}$, edge set
$E(G)$ and weight set $W(G)=\{w_k>0,$ $k=1,$ $2,$ $\cdots,$ $|E(G)|\}$.
If $uv\in E(G), $ denote by $w_G(uv)$  the weight of an edge $uv$; if $uv \notin E(G)$,
define $w_G(uv)=0.$
The weight of a vertex $u$, denoted by $w_G(u)$, is the sum of weights
of all edges incident to $u$ in $G$.

For $p>1$,  the {\it discrete p-Laplacian} $\triangle_p(G)$ of a
function $f$ on $V(G)$ is defined to be
\begin{equation}
\triangle_p(G)f(u)=\sum\limits_{v, uv \in E(G)}(f(u)-f(v))^{[p-1]}w_G(uv),
\end{equation}
where $x^{[q]}=sign(x)|x|^q.$ When $p=2,$ $\triangle_2(G)$ is the
well-known   graph Laplacian (see \cite{Biyikoglu2009}), i.e.,
$\Delta_2(G)=L(G)=D(G)-A(G),$ where
 $A(G)=(w_G(v_iv_j))_{n \times n}$ denotes the  weighted adjacency
 matrix of $G$ and $D(G)=diag(w_G(v_0),$ $w_G(v_1),$ $\cdots,$ $w_G(v_{n-1}))$
 denotes the weighted diagonal matrix of $G$.

A real number $\lambda$ is called an {\it eigenvalue} of
$\triangle_p(G)$ if there exists a function $f \neq 0$ on $V(G)$
such that for $u \in V(G)$,
\begin{equation}
\Delta_p(G)f(u)=\lambda f(u)^{[p-1]}.\end{equation}
The function $f$ is called the {\it eigenfunction} corresponding to
$\lambda$.
 The largest eigenvalue of $\Delta_p(G)$, denoted by  $\lambda^{p}(G)$, is called the
   {\it $p$-Laplacian spectral radius}.

 Given a tree degree sequence $\pi$ and a positive set $W$, denote by $\mathcal{T}_{\pi, W}$  the set
of trees with a given  tree degree sequence $\pi$ and a positive
weight set $W$. Thus the un-weighted greedy tree $T_{\pi}^* $ has a BFS-ordering
$v_0\prec v_2\cdots\prec v_{n-1}$. Hence the unique weighted greedy tree $T_{{\pi}, W}^*$
is obtained from  the un-weighted greedy tree $T_{\pi}^* $  whose edge weighted has the following property`` for any two edges $v_iv_j$ and $v_kv_l$  with $i<j, k<l$, weight
$w(v_iv_j)\ge w(v_kv_l)$ if $i<k$ or $i=k$ and $j<l$."
Zhang and Zhang \cite{zhanggjzhang2011} proved the following
results
\begin{theorem}\cite{zhanggjzhang2011}\label{p-L-w-1} For a given  tree degree sequence $\pi$  and a positive weight set $W$, the weighted greedy tree
$T_{\pi, W}^{\ast}$  is the unique weighted tree with the largest $p$-Laplacian
spectral radius  in $\mathcal{T}_{\pi, W}$, which is independent of
$p$.
\end{theorem}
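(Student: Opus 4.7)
The plan is to mirror the two-step extremal strategy underlying Theorem~\ref{adjeigupp}, with the Perron-Frobenius reasoning replaced by a variational characterization suited to the $p$-Laplacian. The key identity is
$$\lambda^p(G^W) \;=\; \max_{f\not\equiv 0}\; \frac{\sum_{uv\in E(G)} w_G(uv)\,|f(u)-f(v)|^p}{\sum_{u\in V(G)} |f(u)|^p},$$
with equality precisely when $f$ is an eigenfunction for $\lambda^p(G^W)$. Because $G$ is connected there is a positive extremal $f$, and this yields the comparison principle needed throughout: for $T, T' \in \mathcal{T}_{\pi,W}$, if $f$ is the extremal eigenfunction of $T$ and the Rayleigh quotient for $T'$ evaluated at $f$ strictly exceeds $\lambda^p(T)$, then $\lambda^p(T') > \lambda^p(T)$.

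Fix an extremal $T \in \mathcal{T}_{\pi,W}$ with positive eigenfunction $f$. \textbf{Step 1 (structure).} I would show that the underlying unweighted graph of $T$ equals $T_\pi^*$. Suppose the BFS-ordering induced by $f$ is violated; then there exist vertices $u$, $v$ and a subtree attached to $v$ through some child $x$ with $f(u) \geq f(v)$ but $x$ not adjacent to $u$, even though the degree sequence allows such a configuration. Perform the local exchange that detaches the subtree at $x$ from $v$ and reattaches it to $u$, keeping the weight $w(vx)$ on the new edge $ux$; a symmetric degree-balancing swap yields a valid member of $\mathcal{T}_{\pi,W}$. The change in the numerator of the Rayleigh quotient applied to $f$ is
$$w(vx)\bigl(|f(u)-f(x)|^p - |f(v)-f(x)|^p\bigr) + (\text{compensating lower-order terms}),$$
which is strictly positive under the sign pattern $f(u) \geq f(v) \geq f(x)$ and the monotonicity of $t \mapsto t^p$ on $[0,\infty)$. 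By Proposition~\ref{BFRuni}, this forces the unweighted graph of $T$ to coincide with $T_\pi^*$.

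\textbf{Step 2 (weights).} Having fixed the underlying graph as $T_\pi^*$ with a BFS-ordering $v_0 \prec v_1 \prec \cdots \prec v_{n-1}$ consistent with $f$, I would argue by pairwise weight-swap that the weight assignment must match that of $T_{\pi,W}^*$. If two edges $e = v_iv_j$ and $e' = v_kv_l$ (ordered lexicographically with $i<j$, $k<l$ and $(i,j)$ preceding $(k,l)$) carry weights $w(e) < w(e')$, interchanging the two weights keeps the tree in $\mathcal{T}_{\pi,W}$ and changes the numerator at $f$ by
$$\bigl(w(e')-w(e)\bigr)\bigl(|f(v_i)-f(v_j)|^p - |f(v_k)-f(v_l)|^p\bigr),$$
which is nonnegative by the rearrangement inequality, provided one has verified that $|f(v_i)-f(v_j)|^p$ is monotone in the lexicographic edge-ordering. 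This monotonicity is extracted from the eigenvalue equation $\lambda^p(T) f(u)^{[p-1]} = \sum_{v \sim u} (f(u)-f(v))^{[p-1]} w(uv)$ together with the BFS-consistency of $f$, which forces the differences $f(\text{parent})-f(\text{child})$ to respect the BFS hierarchy on $T_\pi^*$.

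The principal obstacle is the nonlinearity of $t\mapsto t^{[p-1]}$ when $p\neq 2$: in the quadratic case one can propagate order relations from positivity of $f$ essentially for free via Perron-Frobenius, whereas for general $p>1$ each propagation step needs a careful case analysis on the signs of $f(u)-f(v)$ across neighbors, likely done by induction on the height of the offending vertex. Once the BFS-monotonicity of $f$ is established, the exchange arguments in both steps are uniform across all $p>1$ and use only the monotonicity of $t\mapsto t^p$ and the rearrangement inequality; this uniformity is precisely what delivers the claimed $p$-independence of the extremal tree.
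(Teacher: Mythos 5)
First, note that this survey states Theorem~\ref{p-L-w-1} with a citation to \cite{zhanggjzhang2011} and contains no proof of it, so the comparison can only be with the strategy of the cited source; your two-step plan (Rayleigh quotient plus local exchange to force the greedy shape, then a weight-rearrangement step) is indeed the standard architecture used there and in \cite{Biyikoglu2009} and \cite{Tan2010}.

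There is, however, a genuine gap at the foundation of your argument: the claim that ``because $G$ is connected there is a positive extremal $f$'' is false for the \emph{largest} $p$-Laplacian eigenvalue. The maximizer of $\sum_{uv\in E} w(uv)\lvert f(u)-f(v)\rvert^p \big/ \sum_u \lvert f(u)\rvert^p$ necessarily changes sign: already for a single weighted edge and $p=2$ the supremum over positive $f$ is strictly smaller than the eigenvalue, and on any connected bipartite graph flipping the sign of $f$ on one colour class strictly increases the numerator of any nonnegative nonconstant $f$. Consequently the sign pattern $f(u)\ge f(v)\ge f(x)$ on which both of your exchange computations rest is not available, and the displayed increments need not have the sign you assert. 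The correct device --- which is exactly what the cited proofs use and what your sketch omits --- is to exploit the bipartiteness of trees to rewrite the Rayleigh quotient as $\sum_{uv\in E} w(uv)\bigl(\lvert f(u)\rvert+\lvert f(v)\rvert\bigr)^p \big/ \sum_u \lvert f(u)\rvert^p$ (the $p$-analogue of passing from $L$ to the signless Laplacian $Q$, cf.\ Theorem~\ref{adjeigupp}), and then to run all exchange and rearrangement arguments on the nonnegative function $\lvert f\rvert$, for which monotone increase in each argument of $(s,t)\mapsto (s+t)^p$ makes the inequalities immediate and uniform in $p$. Your closing paragraph correctly identifies that sign propagation is the principal obstacle for $p\neq 2$, but mis-states that positivity of $f$ handles $p=2$ ``for free''; it does not, since even there the relevant eigenfunction alternates in sign. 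Once this substitution is made, your Steps 1 and 2 go through essentially as written (with the usual extra care in the equality case to obtain uniqueness).
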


\begin{theorem}\cite{zhanggjzhang2011}\label{P-L-w-2}
Let $\pi$ and $\tau$ be two different tree degree sequences.  Let ${T}_{\pi, W}^*$ and ${T}_{\tau, W}^*
$ be two weighted greedy trees in $\mathcal{T}_{\pi, W}$ and $\mathcal{T}_{\tau, W}$ respectively.  If $\pi
\triangleleft \tau $,  then $\lambda^p(T_{\pi, W}^{\ast})<
\lambda^p(T_{\tau, W}^{\ast})$.
\end{theorem}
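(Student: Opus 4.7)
The plan is to reduce the majorization inequality to an ``adjacent'' single-step case via Proposition~\ref{prop}, build a witness tree by a local edge-shift, and compare Rayleigh quotients, then invoke Theorem~\ref{p-L-w-1} to finish.

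\emph{Reduction.} By Proposition~\ref{prop} there is a chain $\pi \triangleleft \pi_1 \triangleleft \cdots \triangleleft \pi_k \triangleleft \tau$ of tree degree sequences in which consecutive members differ in exactly two components, so it suffices to treat the single step $\tau_i = \pi_i + 1$, $\tau_j = \pi_j - 1$ with $i<j$ and $\tau_r = \pi_r$ for $r \neq i,j$. Note $\pi_j \ge 2$, since $\tau_j \ge 1$ as $\tau$ is a tree degree sequence.

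\emph{Edge-shift and Rayleigh quotient.} Denote by $R_p(f;G)$ the $p$-Laplacian Rayleigh quotient
\[
R_p(f;G) = \frac{\sum_{xy \in E(G)} w_G(xy)\,|f(x)-f(y)|^p}{\sum_{x \in V(G)} |f(x)|^p},
\]
so that $\lambda^p(G^W)= \max_{f \not\equiv 0} R_p(f;G)$ and this maximum is attained at the Perron-type eigenfunction $f$ of $T_{\pi,W}^*$. The proof of Theorem~\ref{p-L-w-1} yields the BFS-consistency of $f$: $f(x) > f(y)$ whenever $x \prec y$ in the BFS-order of $T_{\pi,W}^*$. Let $u,v$ be the vertices in BFS positions $i,j$ of $T_{\pi,W}^*$. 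Since $\pi_j \ge 2$, $v$ has a child $w$ in a deeper BFS-layer, joined to $v$ by an edge of weight $\omega=w_G(vw)$. Form $T$ by deleting $vw$ and attaching the subtree hanging at $w$ to $u$ via a new edge of weight $\omega$. Then $T$ is a tree using the weight multiset $W$ whose (unordered) degree sequence is $\tau$, so $T \in \mathcal{T}_{\tau,W}$.

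\emph{Strict increase and closing.} Comparing $R_p(f;T)$ with $R_p(f;T_{\pi,W}^*)$, the denominators coincide and the numerators differ only in the single rewired edge: $\omega|f(v)-f(w)|^p$ is replaced by $\omega|f(u)-f(w)|^p$. Since $f(u)>f(v)>f(w)>0$ by BFS-consistency and positivity, $|f(u)-f(w)|^p > |f(v)-f(w)|^p$, whence $R_p(f;T) > R_p(f;T_{\pi,W}^*) = \lambda^p(T_{\pi,W}^*)$. Therefore $\lambda^p(T) \ge R_p(f;T) > \lambda^p(T_{\pi,W}^*)$, and Theorem~\ref{p-L-w-1} applied to $\tau$ gives $\lambda^p(T_{\tau,W}^*) \ge \lambda^p(T)$. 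Iterating along the chain completes the proof.

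\emph{Main obstacle.} The principal technical point is securing the strict BFS-monotonicity $f(u)>f(v)$ for the Perron-type eigenfunction of the $p$-Laplacian: unlike the linear ($p=2$) case, Perron-Frobenius only yields positivity, and strict ordering must be obtained by the same local rearrangement argument that powers the proof of Theorem~\ref{p-L-w-1}. A secondary concern is verifying that the edge-shift lands inside $\mathcal{T}_{\tau,W}$, i.e.\ that transplanting the subtree rooted at $w$ from $v$ to $u$ yields another tree with exactly the same weight multiset; this is immediate because only a single edge is rewired and its weight is preserved.
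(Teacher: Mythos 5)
The survey states this theorem only with a citation to \cite{zhanggjzhang2011} and contains no proof of it, so I can only judge your argument on its own terms; your skeleton (reduce to a unit transfer via Proposition~\ref{prop}, shift one edge from $v$ to $u$, compare Rayleigh quotients, close with Theorem~\ref{p-L-w-1}) is indeed the standard route for this family of results. But there is a genuine gap in your ``strict increase'' step. For $\Delta_p$ as defined in the paper, the relevant quotient is $\sum_{xy\in E}w(xy)|f(x)-f(y)|^p/\sum_x|f(x)|^p$, and the maximizer for the \emph{largest} eigenvalue of a tree is not positive: a tree is bipartite, and replacing $f$ by $\pm|f|$ according to the bipartition can only increase the numerator, so the extremal eigenfunction alternates in sign with the parity of the layer (already for $p=2$ the path $P_3$ has top Laplacian eigenvector $(1,-2,1)$). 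Hence your chain $f(u)>f(v)>f(w)>0$ is false, and the key inequality $|f(u)-f(w)|^p>|f(v)-f(w)|^p$ is unjustified: if $f(u)$ and $f(w)$ share a sign while $f(v)$ and $f(w)$ do not, the rewired term can actually \emph{decrease}. The repair is to exploit bipartiteness and work with the equivalent signless quotient $\sum_{xy\in E}w(xy)\bigl(g(x)+g(y)\bigr)^p/\sum_x g(x)^p$ over $g=|f|\ge 0$, which is valid for every tree including the rewired one, and for which BFS-monotonicity of $g$ does give $\bigl(g(u)+g(w)\bigr)^p\ge\bigl(g(v)+g(w)\bigr)^p$.

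Second, even after that repair you only obtain $\lambda^p(T_{\tau,W}^*)\ge\lambda^p(T)\ge\lambda^p(T_{\pi,W}^*)$ unless you establish either strict monotonicity $g(u)>g(v)$ or an independent argument excluding equality (for instance, that equality would force $g$ to be an extremal eigenfunction of both trees simultaneously, contradicting the eigenvalue equation at $v$, whose degree changes under the shift). You explicitly flag this as ``the principal technical point'' and defer it to ``the same local rearrangement argument that powers Theorem~\ref{p-L-w-1},'' but that theorem as quoted gives uniqueness of the extremal tree, not strict monotonicity of its eigenfunction, so the decisive step is asserted rather than proved. The combinatorial parts of your argument (the transfer $\tau_i=\pi_i+1$, $\tau_j=\pi_j-1$ with $\pi_j\ge 2$, the existence of the child $w$, and the fact that the rewired graph is a tree in $\mathcal{T}_{\tau,W}$) are fine; both analytic gaps are fixable, but as written the proof does not go through.
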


{\bf Remark} If the positive set $W$ consisting  of all 1,  Theorems~\ref{p-L-w-1} and \ref{P-L-w-2}  become Biyiko\u{g}lu and Leydold's result \cite{Biyikoglu2009}.  If $p=2$, Theorems~\ref{p-L-w-1} and \ref{P-L-w-2}  become Tan's result \cite{Tan2010}.  If  the positive set $W$ consisting  of all 1 and $p=2$,  Theorems~\ref{p-L-w-1} and \ref{P-L-w-2} become Zhang' result\cite{zhang2008}. In addition, Corollaries \ref{leaves}, \ref{maxdegree}, \ref{inde} and \ref{match} can be easily generalized to the weighted trees, respectively. Moreover, the p-adjacency eigenvalue may be similarly defined and Theorems~\ref{p-L-w-1} and \ref{P-L-w-2} can be translated to the p-adjacency eigenvalue.
\subsection{The Dirichlet eigenvalue of trees with boundary}

In this subsection, we consider the Dirichlet eigenvalues of  trees with boundary.
  A {\it graph with boundary} $G=(V_0\cup \partial V$,
 $E_0\cup \partial E)$ consists of interior vertex set
 $V_0$,
boundary vertex set $\partial V$, interior edge set $E_0$ that connect
interior vertices, and boundary edge set $\partial E$ that join
interior vertices with boundary vertices  (for example, see
\cite{chung1997} or \cite{friedman1993}). Throughout this subsection, we
always assume that the degree of any boundary vertex is 1 and the
degree of any interior vertex is at least 2.

A real number $\lambda^D$ is called a {\it Dirichlet eigenvalue} of $G$
  if there exists a function $f \neq 0$
such that
they satisfy the Dirichlet eigenvalue problem:
 \begin{displaymath}
\left \{ \begin{array}{ll}
L(G)f(u)=\lambda f(u) & u \in V_0;\\
f(u)=0 & u \in \partial V.\\
\end{array} \right.
\end{displaymath}
The function $f$ is called an {\it eigenfunction} corresponding to
$\lambda^D$.
 Recently, B${\i}$y${\i}$ko\u{g}lu and
Leydold \cite{biykoglu2007}  proposed the following problem:
\begin{problem}(\cite{biykoglu2007})
 Give a characterization of all graphs in a given class $\mathcal{C}$
with the Faber-Krahn property, i.e., characterize those graphs in
$\mathcal{C}$ which have minimal first Dirichlet eigenvalue for a
given ``volume''.
\end{problem}
In order to study the above problem, B${\i}$y${\i}$ko\u{g}lu and Leydold  \cite{biykoglu2007}
 extended the concept of an SLO*-ordering for describing the trees with
the Faber-Krahn property, which was introduced by Pruss (see
\cite{pruss1998}). The notation of an SLO*-ordering may be extended
for
 any connected graphs.

\begin{definition}\label{definition3.1} (\cite{biykoglu2007}) Let $G=(V_0\cup \partial V, E_0\cup
\partial E)$ be a connected graph with root $v_0$. Then a
well-ordering $\prec$ of the vertices is called spiral-like
(SLO*-ordering for short) if
 the following holds for all vertices $u, v, x, y \in V_0\cup \partial V$:

 (1) $ v\prec u $ implies $h(v)\leq h(u)$, where $h(v)$ denotes the distance between $v$ and $v_0$;

 (2) Let $uv \in E(G)$, $xy \in E(G), uy \notin E(G)$,
 $xv \notin E(G)$
with $h(u)=h(v)-1$ and $h(x)=h(y)-1$. If $u \prec x$, then $v \prec
y$ ;

 (3) If $ v \prec u$ and $ v\in \partial V$, then $u \in \partial V$.

 (4) If $ v \prec u$ and $v, u\in V_0$, then $d(v)\le d(u)$.
\end{definition}
Clearly,  if $G$ is a tree, an SLO*-ordering of $G$  is consistent
with the definition of an SLO*-ordering in \cite{biykoglu2007}.
Moreover, if there exists a positive integer $r$ such that  the
number of
 vertices $v$ with $h(v)=i+1$ is not less than the number of  vertices
$v$ with  $h(v)=i$ for $i=1, \cdots, r-1$,  and $h(u) \in \{r,
r+1\}$ for any boundary vertex $u \in
\partial V$,  $G$ is called a {\it ball approximation}.
Given a tree degree sequence $\pi$, denote by $\mathcal{T}_{\pi, B}$ the set consisting of all trees with boundary and degree sequence $\pi$.  B${\i}$y${\i}$ko\u{g}lu and Leydold  \cite{biykoglu2007} proved that  $\mathcal{T}_{\pi, B}$   contains an SLO*-tree that is uniquely determined up to isomorphism. This special tree in $\mathcal{T}_{\pi, B}$   is denoted by $T_{\pi, B}^*$.

\begin{theorem}\cite{biykoglu2007}\label{kf-tree}
For a given  tree degree sequence $\pi$,
$$\lambda^D(T)\ge \lambda^D(T_{\pi, B}^*)\  {\rm for}\  T\in \mathcal{T}_{\pi, B}$$
with equality if and only if $T=T_{\pi, B}^*$.\end{theorem}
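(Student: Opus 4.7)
The plan is to follow the variational strategy that underlies Theorems~\ref{adjeigupp}--\ref{majoreig}, adapted to the Dirichlet spectrum. Let $L_0$ denote the principal submatrix of $L(T)$ indexed by the interior vertices $V_0$; then the Dirichlet eigenvalues of $T$ coincide with the eigenvalues of $L_0$, and the Rayleigh quotient characterization gives
\begin{equation*}
\lambda^D(T)=\min_{f\not\equiv 0,\ f|_{\partial V}=0}\ \frac{\sum_{uv\in E(T)}(f(u)-f(v))^2}{\sum_{u\in V_0}f(u)^2}.
\end{equation*}
Because every boundary vertex is a leaf, the subgraph induced on $V_0$ is a subtree of $T$, so $L_0$ is an irreducible $M$-matrix whose inverse has strictly positive entries; consequently the first Dirichlet eigenfunction $f$ is unique up to scaling and may be taken strictly positive on $V_0$.

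Fix an extremal tree $T\in\mathcal{T}_{\pi,B}$ attaining the minimum of $\lambda^D$ over the class, and let $f$ be its positive first Dirichlet eigenfunction. Choose a root $v_0\in V_0$ at which $f$ is maximal and list the vertices as $v_0\prec v_1\prec\cdots\prec v_{n-1}$, refining the ordering ``larger $f$-value first'' by height and then by degree. The goal is to verify the four defining conditions of an SLO*-ordering for this ordering. Condition~(3) is immediate because $f=0$ on $\partial V$ and $f>0$ on $V_0$, so all boundary vertices appear at the end. Conditions~(1), (2) and~(4) are established by contradiction: a violation of any of them allows a local reconfiguration of $T$ that strictly decreases $\lambda^D$, contradicting extremality.

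The core computation driving the exchange is the elementary identity
\begin{equation*}
(f(x)-f(u))^{2}+(f(y)-f(v))^{2}-(f(x)-f(v))^{2}-(f(y)-f(u))^{2}=2\bigl(f(x)-f(y)\bigr)\bigl(f(v)-f(u)\bigr).
\end{equation*}
Suppose $xu,yv\in E(T)$ with $u$ a child of $x$ and $v$ a child of $y$, and suppose $f(x)>f(y)$ while $f(v)>f(u)$. Swap the pendant subtrees rooted at $u$ and $v$ by deleting the edges $xu,yv$ and inserting $xv,yu$; all other edges remain in place, so $T'$ is a tree with the same degree sequence $\pi$, hence $T'\in\mathcal{T}_{\pi,B}$. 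Feeding $f$ into the Rayleigh quotient on $T'$ and applying the identity yields
\begin{equation*}
\lambda^D(T')\le\frac{\sum_{ab\in E(T')}(f(a)-f(b))^{2}}{\sum_{a\in V_0}f(a)^{2}}<\lambda^D(T),
\end{equation*}
contradicting the choice of $T$. The same subtree exchange handles condition~(2); a closely related exchange, in which a subtree hanging at a higher-degree vertex is moved closer to the root, deals with conditions~(1) and~(4).

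The main technical obstacle is ensuring that each exchange really produces a valid element of $\mathcal{T}_{\pi,B}$, i.e., that it preserves tree-connectedness, the degree sequence $\pi$, and the condition that every boundary vertex is a leaf at an admissible height. This forces the moves to be performed on whole pendant subtrees rather than on individual edges, and forces the verification of conditions~(1) and~(4) to proceed layer by layer from the root outwards, invoking condition~(2) at each level. Once all four SLO* conditions have been verified for $T$, the uniqueness up to isomorphism of the SLO*-tree in $\mathcal{T}_{\pi,B}$, asserted just before the theorem, forces $T\cong T_{\pi,B}^{\ast}$, which simultaneously yields $\lambda^D(T)\ge\lambda^D(T_{\pi,B}^{\ast})$ and shows that equality holds only when $T\cong T_{\pi,B}^{\ast}$.
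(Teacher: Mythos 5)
First, note that the survey does not actually reproduce a proof of this theorem: it is quoted from B{\i}y{\i}ko\u{g}lu and Leydold \cite{biykoglu2007}, so your proposal can only be measured against the strategy of that cited proof, which is indeed the rearrangement-of-the-Rayleigh-quotient argument you outline (every minimizer of $\lambda^D$ in $\mathcal{T}_{\pi,B}$ must carry an SLO*-ordering compatible with its first Dirichlet eigenfunction, and the SLO*-tree is unique up to isomorphism). Your setup (positivity and uniqueness of the first Dirichlet eigenfunction via the irreducible principal submatrix $L_0$, the two-edge swap identity, and the reduction of the equality case to the uniqueness of the SLO*-tree) is sound and is essentially the right architecture.

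There is, however, a concrete directional error in the step that is supposed to establish conditions (1) and (4) of Definition~\ref{definition3.1}. Condition (4) requires $d(v)\le d(u)$ whenever $v\prec u$: in the Faber--Krahn extremal tree the degrees \emph{increase} away from the root, which is the opposite of the BFS/greedy tree of Theorem~\ref{adjeigupp}. Accordingly, a violation of (4) is an early (large-$f$) interior vertex $v$ with $d(v)>d(u)$ for some later (small-$f$) vertex $u$, and the repairing move must push $d(v)-d(u)$ pendant branches \emph{away} from the root, from $v$ to $u$: each relocated edge $vw\mapsto uw$ changes the numerator by $(f(u)-f(w))^2-(f(v)-f(w))^2\le 0$ once one knows $f(w)\le f(u)\le f(v)$, and moving exactly $d(v)-d(u)$ children exchanges the degrees of $v$ and $u$ so that the degree sequence is preserved. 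Your sketch instead says the subtree at the higher-degree vertex is ``moved closer to the root''; that move strictly \emph{increases} the Rayleigh quotient for the Dirichlet minimization problem and yields no contradiction with minimality (it is the correct move for the spectral-radius \emph{maximization} of Theorem~\ref{adjeigupp}, from which you appear to have imported it). A secondary gap is circularity in the construction of the ordering: you define $\prec$ by sorting on $f$-value, height and degree simultaneously, but the compatibility of $f$ with height (that $f$ decreases along every path away from its maximum in an extremal tree) is itself one of the lemmas that must be proved first, via the same exchange technique, before the ordering is well defined.
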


\begin{theorem}\cite{biykoglu2007}\label{kf-tree-com}
Let $\pi$ and $\tau $  be two different tree degree sequences. If $\pi\triangleleft \tau$, then  $ \lambda^D(T_{\pi, B}^*)> \lambda^D(T_{\tau, B}^*)$.
\end{theorem}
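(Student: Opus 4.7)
The plan is to mirror the strategy of Theorem~\ref{majoreig}: first reduce the majorization $\pi\triangleleft\tau$ to a chain of elementary two-coordinate shifts via Proposition~\ref{prop}, then for each elementary shift construct an explicit tree in $\mathcal{T}_{\tau,B}$ by a local edge-swap on $T_{\pi,B}^{*}$, and compare first Dirichlet eigenvalues via the Rayleigh--Ritz formula
\[
\lambda^D(G)\;=\;\min_{\substack{f\ne0\\ f|_{\partial V}=0}}\;\frac{\sum_{uv\in E}(f(u)-f(v))^{2}}{\sum_{v} f(v)^{2}}.
\]
Theorem~\ref{kf-tree} then bridges the local comparison to the global strict inequality.

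By Proposition~\ref{prop} it suffices to handle the elementary case $\tau_j=\pi_j+1,\;\tau_k=\pi_k-1$ with $j<k$ (so $\pi_j\ge\pi_k$); the general case follows by chaining the elementary strict inequalities along $\pi=\pi_0\triangleleft\pi_1\triangleleft\cdots\triangleleft\pi_m=\tau$. Fix such a unit shift, and in $T_{\pi,B}^{*}$ identify a ``receiver'' $u\in V_0$ of degree $\pi_j$ and a ``donor'' $u'\in V_0$ of degree $\pi_k$. Because Definition~\ref{definition3.1}(4) places interior vertices of smaller degree earlier in the SLO*-ordering, the donor precedes the receiver: $u'\prec u$. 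Pick a pendant boundary vertex $w$ adjacent to $u'$ and set
\[
T''\;=\;T_{\pi,B}^{*}-u'w+uw.
\]
Then $T''$ is a tree, and (provided $\pi_k\ge 3$ so that $u'$ remains interior) has the same vertex set and interior/boundary partition as $T_{\pi,B}^{*}$ together with degree sequence $\tau$, so $T''\in\mathcal{T}_{\tau,B}$.

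Let $f$ be the first Dirichlet eigenfunction of $T_{\pi,B}^{*}$, so $f>0$ on $V_0$, $f|_{\partial V}=0$, and $R_{T_{\pi,B}^{*}}(f)=\lambda^D(T_{\pi,B}^{*})$. Used as a test function on $T''$, the denominator of $R_{T''}(f)$ is unchanged, while the numerator changes by $f(u)^{2}-f(u')^{2}$, since $w\in\partial V$ gives $f(w)=0$. Invoking the SLO*-monotonicity of the Dirichlet ground state on the Faber--Krahn extremal tree (a Dirichlet analog of Theorem~\ref{adjeigupp} established in \cite{biykoglu2007}), $u'\prec u$ implies $f(u')\ge f(u)$, hence $f(u)^{2}-f(u')^{2}\le 0$. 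Therefore $\lambda^D(T'')\le R_{T''}(f)\le R_{T_{\pi,B}^{*}}(f)=\lambda^D(T_{\pi,B}^{*})$, and combining with Theorem~\ref{kf-tree} applied to $\tau$ yields $\lambda^D(T_{\tau,B}^{*})\le\lambda^D(T'')\le\lambda^D(T_{\pi,B}^{*})$. Strict inequality follows either from the uniqueness clause of Theorem~\ref{kf-tree} (forcing $\lambda^D(T_{\tau,B}^{*})<\lambda^D(T'')$ when $T''\ne T_{\tau,B}^{*}$), or from the observation that $f$ cannot simultaneously satisfy the Dirichlet eigenvalue equation on two trees whose Laplacians differ in two rows (forcing $\lambda^D(T'')<\lambda^D(T_{\pi,B}^{*})$ when $T''=T_{\tau,B}^{*}$).

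The principal obstacle is establishing the SLO*-monotonicity $f(u')\ge f(u)$ for $u'\prec u$: unlike the adjacency Perron vector (Theorem~\ref{adjeigupp}), whose ordering is dictated by vertex degree via Perron--Frobenius, the Dirichlet ground state is also sensitive to boundary structure, so the desired comparison is a nontrivial consequence of the compactification encoded in the SLO*-construction rather than a direct spectral argument. A secondary technicality concerns the degenerate case $\pi_k=2$, where the swap demotes $u'$ to a degree-$1$ vertex and hence shifts the interior/boundary partition; in this case one refines the chain in Proposition~\ref{prop} so as to avoid crossing interior--boundary thresholds at individual steps, and then obtains the general inequality by further chaining the admissible elementary strict inequalities.
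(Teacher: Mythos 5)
Your skeleton is the expected one: this survey states the theorem without proof (it is quoted from \cite{biykoglu2007}), and the strategy you describe --- reduce $\pi\triangleleft\tau$ to unit transfers via Proposition~\ref{prop}, rewire one edge of $T_{\pi,B}^{*}$ to realize the new degree sequence, test the old first Dirichlet eigenfunction in the Rayleigh quotient, and close with Theorem~\ref{kf-tree} --- is exactly the scheme behind the analogous Theorems~\ref{majoreig}, \ref{P-L-w-2} and \ref{wienr-com}. Your strictness discussion is also sound: if all inequalities were equalities, $f$ would be a Dirichlet eigenfunction of $T''$ as well, yet the two Laplacians differ at $u'$ by exactly $f(u')\neq 0$.

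The gap is the sentence ``Pick a pendant boundary vertex $w$ adjacent to $u'$.'' The SLO*-ordering is the reverse of the BFS-ordering with respect to degree: by Definition~\ref{definition3.1}(4) the \emph{smallest} interior degrees sit nearest the root, while by (3) all boundary vertices come last, i.e.\ in the outermost layers. Your donor $u'$ carries the smaller degree $\pi_k$, so it lives in an early layer and its outward neighbours are typically interior vertices of larger degree, not leaves. Concretely, take interior degrees $(3,4,4,4)$ with nine leaves: $T_{\pi,B}^{*}$ is the degree-$3$ root joined to three degree-$4$ vertices, each carrying three leaves, and the unit transfer $3\mapsto 2$, $4\mapsto 5$ forces the donor to be the root, which has no pendant neighbour. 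Then $w$ must be an interior child with $f(w)\neq 0$, and the change in the Dirichlet form becomes $(f(u)-f(w))^{2}-(f(u')-f(w))^{2}=(f(u)-f(u'))\,(f(u)+f(u')-2f(w))$, whose sign is no longer controlled by $f(u')\ge f(u)$ alone: you additionally need $2f(w)\le f(u)+f(u')$, and the SLO*-monotonicity you invoke only yields $f(w)\le f(u')$, saying nothing about $f(w)$ versus $f(u)$. Choosing $u$, $u'$ and $w$ so that this extra inequality holds (and so that $w$ is off the $u'$--$u$ path) is precisely the nontrivial part of the argument in \cite{biykoglu2007}, and it is missing here. Finally, your fallback for $\pi_k=2$ does not work: for $\pi=(2,2,2,1,1)\triangleleft(3,2,1,1,1)=\tau$ every majorization chain must lower some entry from $2$ to $1$, so ``refining the chain to avoid crossing the interior--boundary threshold'' is impossible; that case changes the boundary set, makes $f$ inadmissible as a test function without modification, and has to be treated head-on.
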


\section{Chemical indices  for tree degree sequences}

There are many indices which are used to describe molecules and molecular compounds in chemical graph theory.
 One of the most widely known topological descriptor
 is the {\it Wiener index} which is named after chemist Wiener
 \cite{wiener1947}  who firstly considered  it.
The {\it Wiener index} of a graph $G$ is
defined as
\begin{equation}W(G)=\sum_{\{v_i, v_j\}\subseteq V(G)}d(v_i, v_j).\end{equation}
 Let $T=(V, E)$ be a rooted tree with root $r$.  For each vertex $u$, let $T(u)$ be   the subtree of
the rooted tree $T$ induced  by $u$ and all its  successors in $T$.
In other words, if $u$ is not the root $r$ of tree $T$ and $v$ is
the parent of $u$,  then $T(u)$ is the connected  component   of $T$
obtained from $T$ by deleting the edge $uv$ such that the component
does not contain the root $r$; if $u$ is the root $r$, then $T(u)$
is the tree $T$. Let $\phi_T(u)=|T(u)|$ be the number of vertices in
$T(u)$ and denote  $\phi(T)=(\phi_T(u), u\in V(T))$. We prove the following results
\begin{theorem}\cite{zhangxiang2008}\label{lar-smal}
Let $T$ be a rooted tree in ${\cal T}_{\pi}$. Then the following
conditions are equivalent:

(1) $T$ has a BFS-ordering;

(2) $\phi(T)_{\downarrow}=\phi(T_{\pi}^*)_{\downarrow}$;

(3) $T$ is isomorphic to $T_{\pi}^*$.
\end{theorem}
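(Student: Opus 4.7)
The plan is to prove the cycle $(1)\Rightarrow(3)\Rightarrow(2)\Rightarrow(1)$. The first two implications are essentially free. $(1)\Rightarrow(3)$ is a direct application of Proposition~\ref{BFRuni}, which already asserts that within $\mathcal{T}_{\pi}$ the only tree admitting a BFS-ordering is $T_\pi^*$ (up to isomorphism). $(3)\Rightarrow(2)$ is equally immediate, since a rooted isomorphism $T\cong T_\pi^*$ sends each rooted subtree to a rooted subtree of the same cardinality and therefore preserves the multiset $\phi(T)$.

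The substantive content is $(2)\Rightarrow(1)$, where the information in $\phi(T)$ must be used to build a BFS-ordering on $T$. My first step is a structural observation about $T_\pi^*$ itself: under its canonical ordering $v_{01}\prec v_{11}\prec\cdots$, the sequence $\phi_{T_\pi^*}(v_{01}),\phi_{T_\pi^*}(v_{11}),\ldots$ is non-increasing, because the greedy recursion always assigns the largest remaining degree to the earliest unfilled BFS-slot and thereby pushes the largest future subtree to that slot as well. Granting this, I would list the vertices of $T$ in non-increasing order of $\phi_T$, breaking ties so that children of earlier-listed vertices come first, and verify conditions~(1)--(3) of Definition~\ref{BFS} for the resulting order. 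The height condition follows from $\phi_T(u)>\phi_T(w)$ for every proper descendant $w$ of $u$, which, combined with the multiset equality, forces the level sizes of $T$ to coincide with the level sizes $s_0,s_1,\ldots$ of $T_\pi^*$. The degree condition then falls out of the recursion $\phi_T(u)=1+\sum_{w}\phi_T(w)$ taken over the children $w$ of $u$: knowing the $\phi$-value of every vertex together with the level structure recovers the multiset of children's $\phi$-values at each parent, hence the parent's degree, and matches the degree profile level by level to that of $T_\pi^*$.

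The main obstacle is the adjacency condition~(3). Here my plan is induction on the level $t$: assuming that the restriction of $\prec$ to levels $0,\ldots,t$ already agrees with the greedy assignment of $T_\pi^*$, I would compare the distribution of level-$(t+1)$ children among the level-$t$ parents $v_{t,1}\prec v_{t,2}\prec\cdots$ in $T$ with that in $T_\pi^*$. Any deviation produces parents $v_{t,r}\prec v_{t,r'}$ and children $w$ of $v_{t,r}$, $w'$ of $v_{t,r'}$ with $\phi_T(w)<\phi_T(w')$; swapping $w$ and $w'$ is a Kelmans-type local move that raises $\phi$ along the chain of ancestors above $v_{t,r}$ and lowers it along the chain above $v_{t,r'}$, and therefore spreads the multiset $\phi(T)$ in the sense of majorization. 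An auxiliary extremal property of the greedy tree --- that $\phi(T_\pi^*)_\downarrow$ lexicographically dominates $\phi(T')_\downarrow$ for every $T'\in\mathcal{T}_\pi$, strictly so after such a swap --- then contradicts the hypothesis of~(2). Establishing this extremal property and handling equalities among the $\phi$-values while tracking the two-sided propagation of the swap up the tree is where the real work lies; once it is in hand, $\prec$ is a BFS-ordering on $T$ and the cycle closes.
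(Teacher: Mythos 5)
Your two easy implications are fine: $(1)\Rightarrow(3)$ is exactly Proposition~\ref{BFRuni}, and $(3)\Rightarrow(2)$ is immediate once the isomorphism is taken to be root-preserving (which you should say explicitly, since $\phi$ depends on the root). Note also that this survey states Theorem~\ref{lar-smal} without printing a proof — it is quoted from \cite{zhangxiang2008} — so the comparison below is with what a complete argument requires rather than with a proof appearing in this text.

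The genuine gap is in $(2)\Rightarrow(1)$, and it is not only the "real work" you defer at the end. First, your intermediate structural claims do not follow from the hypotheses as stated. Knowing $\phi(T)_{\downarrow}=\phi(T_{\pi}^*)_{\downarrow}$ together with the fact that $\phi_T$ strictly decreases along root-to-leaf paths does not force the level sizes of $T$ to equal $s_0,s_1,\ldots$; and even granting the level sizes, the multisets of $\phi$-values on consecutive levels do not determine how the children's values are grouped under parents: different groupings with the same group sums give the same parent $\phi$-values but different parent degrees (children's values $\{3,1\}\mid\{2,1\}$ versus $\{2,1,1\}\mid\{3\}$ both yield parents with $\phi=5$ and $\phi=4$). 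So "the degree condition then falls out of the recursion" is an assertion, not a deduction. Second, the lemma everything hinges on — that $\phi(T_{\pi}^*)_{\downarrow}$ \emph{lexicographically} dominates $\phi(T')_{\downarrow}$ for all $T'\in\mathcal{T}_{\pi}$, strictly after a swap — is both unproven and cast in the wrong dominance order: a Kelmans-type exchange raises $\phi$ along one ancestor chain and lowers it along another, which is naturally tracked by the weak majorization inequality $\sum_{i=1}^{k}\phi(T')_{[i]}\le\sum_{i=1}^{k}\phi(T_{\pi}^*)_{[i]}$ for all $k$, with equality for every $k$ only for the greedy tree; a single swap need not produce a lexicographic improvement at a controlled position, and chaining swaps while preserving strict lexicographic decrease can stall exactly at the ties you acknowledge. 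That weak-majorization inequality with its equality case is the actual content of the theorem (and of the source's proof); once it is in hand, $(2)\Rightarrow(3)$ is immediate and $(2)\Rightarrow(1)$ follows via $(3)$, making your attempt to build a BFS-ordering on $T$ directly an unnecessary and harder detour. As written, the central lemma is assumed rather than established, so the argument is incomplete.
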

Further, this result can be used to characterize the trees having  the minimum
  Wiener index among all trees  with a given degree sequence.  Wang \cite{wanghua2009} independently proved the same result by a different approach.
  \begin{theorem}(\cite{wanghua2009},\cite{zhangxiang2008})\label{wienrmain}
Given a tree degree sequence $\pi$,  the greedy tree $T_{\pi}^*$  is a unique
tree with the minimum Wiener index in ${\cal T}_{ \pi}$.
\end{theorem}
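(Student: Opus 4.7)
The starting point is the classical subtree-size representation of the Wiener index. For any tree $T$ on $n$ vertices rooted at a chosen vertex $r$, each non-root vertex $v$ is incident to a unique edge $e_v$ to its parent, whose removal splits $T$ into components of sizes $\phi_T(v)$ and $n-\phi_T(v)$. Since the number of unordered pairs whose joining path uses $e_v$ is exactly $\phi_T(v)(n-\phi_T(v))$, one has
\[
W(T)\;=\;\sum_{v\in V(T)\setminus\{r\}} \phi_T(v)\bigl(n-\phi_T(v)\bigr)\;=\;\sum_{v\ne r}g\bigl(\phi_T(v)\bigr),
\]
where $g(x)=x(n-x)$ is strictly concave on $[0,n]$. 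I would establish this identity first, since it converts the theorem into a statement about the multiset $\{\phi_T(v):v\ne r\}$.

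The plan is to combine this identity with Theorem~\ref{lar-smal}. Root every $T\in\mathcal{T}_\pi$ at a vertex of maximum degree (for $T_{\pi}^*$ this is $v_{01}$). I would show that the decreasing rearrangement $\phi(T_{\pi}^*)_\downarrow$ is weakly majorized from above by $\phi(T)_\downarrow$, i.e.\ $\phi(T_{\pi}^*)_\downarrow\triangleleft^w\phi(T)_\downarrow$ in the notation of Section~2, by a layer-by-layer BFS induction: at each depth $k$, the greedy construction packs as many descendants as possible near the root, so among all realizations of $\pi$ the partial sums $\sum_{i\le k}\phi_\downarrow$ are minimized by $T_\pi^*$. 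Applying the Hardy--Littlewood--P\'olya inequality to $g$ via this majorization then yields $W(T_\pi^*)\le W(T)$. Equality would force $\phi(T)_\downarrow=\phi(T_\pi^*)_\downarrow$, whereupon the equivalence of (2) and (3) in Theorem~\ref{lar-smal} gives $T\cong T_\pi^*$, which proves uniqueness.

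The main obstacle is precisely the majorization step and the fact that $g(x)=x(n-x)$ is concave but \emph{not} monotone on $(0,n)$, while the weak majorization arises because $\sum_{v\ne r}\phi_T(v)=\sum_v h(v)$ depends on the tree. I would handle this by writing $g(x)=nx-x^2$ and controlling the two pieces separately: the linear piece contributes $n\sum_v h(v)$, which the greedy construction minimizes by its BFS structure, and the quadratic piece $\sum_v\phi_T(v)^2$ is then controlled in the opposite direction using the layer-by-layer argument; both effects combine favorably because the excess depth in a non-greedy tree produces a larger linear penalty than quadratic gain.

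As a technically cleaner alternative (and the one pursued by Wang in \cite{wanghua2009}), the entire argument can be replaced by a direct edge-swap lemma: if $T\in\mathcal{T}_\pi$ fails to admit a BFS-ordering, then exhibit two edges whose swap preserves the degree sequence and strictly decreases $W(T)$, using the edge-sum formula to bookkeep the change. Iterating terminates at the unique BFS-tree $T_\pi^*$ by Proposition~\ref{BFRuni}, simultaneously giving the minimum and its uniqueness.
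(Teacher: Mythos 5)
Your reduction of $W(T)$ to $\sum_{v\ne r}\phi_T(v)(n-\phi_T(v))$ is the right starting point and is exactly the decomposition behind Theorem~\ref{lar-smal}. The gap is the step ``applying the Hardy--Littlewood--P\'olya inequality to $g$ via this majorization.'' Weak majorization $x\triangleleft^w y$ yields $\sum_i g(x_i)\le\sum_i g(y_i)$ only for \emph{nondecreasing convex} $g$; your $g(x)=x(n-x)$ is concave and not monotone on $(0,n)$, and the implication is simply false. Concretely, with $n=6$ the vectors $x=(2,2)$ and $y=(4,1)$ satisfy $x\triangleleft^w y$, yet $g(2)+g(2)=16>g(4)+g(1)=13$, so the weakly majorized vector can have the \emph{larger} value of $\sum g$ --- the opposite of what you need. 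Thus even if you could establish $\phi(T_{\pi}^*)_{\downarrow}\triangleleft^w\phi(T)_{\downarrow}$ (itself unproved, and delicate because the multiset $\{\phi_T(v)\}$ and its total $\sum_v h(v)$ depend on the choice of root while $W(T)$ does not), no majorization principle closes the argument. Your proposed repair --- that the linear penalty $n\sum_v\phi_T(v)$ always outweighs the quadratic gain $\sum_v\phi_T(v)^2$ --- is precisely the content of the theorem and is asserted rather than proved.

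What the cited proofs actually do is closer to your final paragraph: \cite{zhangxiang2008} and \cite{wanghua2009} show by explicit exchange arguments on $W(T)=\sum_{e}n_1(e)n_2(e)$ that a minimizer must admit a BFS-ordering, and then uniqueness follows from the equivalence of (1) and (3) in Theorem~\ref{lar-smal}. As written, though, your swap lemma is only named, not carried out, so it cannot stand in for the missing step. If you want a global (non-exchange) argument in the spirit of your majorization attempt, the correct substitute is Theorem~\ref{thm:nina}: from $p_r(T)\le p_r(T_{\pi}^*)$ for all $r$ and $W(T)=\sum_{r\ge 0}\left(\frac{n(n-1)}{2}-p_r(T)\right)$ one sums a family of inequalities that are individually monotone in the right direction, which is exactly what the concavity of $g$ prevents you from doing with the $\phi$-multiset.
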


\begin{theorem}\cite{zhangxiang2008}\label{wienr-com}
Let $\pi$ and $\tau$ be two different tree degree sequences. Let $T_{\pi}^*$ and $T_{\tau}^* $ be two
 the greedy trees in ${\mathcal T}_{\pi}$ and ${\mathcal
T}_{\tau}$,  respectively.
 If
$\pi\triangleleft \tau$, then $W(T_{\pi}^*)<
W(T_{\tau}^*).$
\end{theorem}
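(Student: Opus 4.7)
The plan is to reduce the global majorization comparison to a one-step elementary exchange and then compare Wiener indices by an explicit local modification, finally chaining through Theorem~\ref{wienrmain}. Concretely, by Proposition~\ref{prop} one obtains a chain
$$
\pi = \pi^{(0)} \triangleleft \pi^{(1)} \triangleleft \cdots \triangleleft \pi^{(k)} = \tau
$$
of tree degree sequences in which consecutive terms differ in only two coordinates (by $\pm 1$). By transitivity of a strict inequality it then suffices to establish the theorem in the base case where $\pi$ and $\tau$ themselves differ in exactly two entries.

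In that base case, say $\tau$ is obtained from $\pi$ by replacing a pair of entries $(d_a, d_b)$ with $(d_a+1, d_b-1)$ at positions $a$ and $b$. The idea is to construct, from $T_\tau^*$, a tree $\widetilde{T} \in \mathcal{T}_\pi$ by a single leaf-swap guided by the BFS-ordering: locate a pendant vertex $u$ in the subtree rooted at the $a$-th vertex of $T_\tau^*$ (the vertex carrying the extra unit of degree) and reattach it below the $b$-th vertex (the one missing a unit). The BFS structure of $T_\tau^*$ together with the careful choice of $u$ makes $\widetilde{T}$ realize $\pi$.

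Then I would compare $W(\widetilde{T})$ and $W(T_\tau^*)$ through the edge-contribution identity $W(T)=\sum_{e}n_1(e)\,n_2(e)$, where $n_1(e)$ and $n_2(e)$ are the component sizes of $T-e$. Only the edges on the path between the $a$-th and $b$-th vertices of $T_\tau^*$ together with the relocated edge contribute to $W(\widetilde{T}) - W(T_\tau^*)$; tallying these via the BFS subtree-size vector $\phi$ yields a closed-form expression with a definite sign, producing the required strict inequality between $W(\widetilde{T})$ and $W(T_\tau^*)$. Combining this with Theorem~\ref{wienrmain}, which guarantees $W(T_\pi^*) \le W(\widetilde{T})$, closes the argument.

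The main obstacle will be the combinatorial bookkeeping in the base case: choosing the leaf-swap so that $\widetilde{T}$ lands in $\mathcal{T}_\pi$ and the Wiener-index change has an unambiguous sign, while also handling the edge case when $\widetilde{T}$ happens to coincide with $T_\pi^*$ (so that the entire strict gap must come from the leaf-swap alone). The criterion in Theorem~\ref{lar-smal} for $\widetilde{T}$ to be isomorphic to $T_\pi^*$, expressed through the $\phi$ vector, is a convenient tool for detecting when this degenerate sub-case occurs and for confirming that the contribution produced by the swap is never cancelled by a subsequent adjustment.
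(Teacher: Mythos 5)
Your skeleton (reduce to a single unit transfer via Proposition~\ref{prop}, perform one local exchange, and invoke the minimality of the greedy tree from Theorem~\ref{wienrmain}) is the right one, but the exchange is oriented the wrong way, and this is fatal. First note that the inequality as printed in Theorem~\ref{wienr-com} is reversed: taking $\psi(x)=x$ in the unnumbered theorem following Theorem~\ref{generalfun} gives $W(T_{\pi}^*)>W(T_{\tau}^*)$ when $\pi\triangleleft\tau$, and the example $\pi=(2,2,1,1)\triangleleft\tau=(3,1,1,1)$ with $W(P_4)=10>9=W(K_{1,3})$ confirms that this is the correct direction. Your chain $W(T_{\pi}^*)\le W(\widetilde{T})<W(T_{\tau}^*)$ therefore cannot be closed: since $\widetilde{T}\in\mathcal{T}_{\pi}$, the (true) theorem already forces $W(\widetilde{T})\ge W(T_{\pi}^*)>W(T_{\tau}^*)$, so the step you defer to ``combinatorial bookkeeping'' --- the claim that relocating a pendant vertex from the $a$-th vertex of $T_{\tau}^*$ to the $b$-th one \emph{decreases} $\sum_{e}n_1(e)n_2(e)$ --- is false; the computation comes out with the opposite sign, because you are moving a branch away from the BFS-earlier (more central) vertex toward a later one.

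The repair is to run the construction in the other direction, which is essentially what \cite{zhangxiang2008} does: start from $T_{\pi}^*$, detach a pendant subtree from the vertex that is to lose a unit of degree and reattach it to the vertex that is to gain one (a vertex earlier in the BFS-ordering, hence closer on the whole to the rest of the tree), obtaining $\widetilde{T}\in\mathcal{T}_{\tau}$ with $W(\widetilde{T})<W(T_{\pi}^*)$; then Theorem~\ref{wienrmain} gives $W(T_{\tau}^*)\le W(\widetilde{T})<W(T_{\pi}^*)$. Alternatively, the survey itself derives the correct inequality without any exchange argument, as an immediate consequence of the monotonicity $p_r(T_{\pi}^*)\le p_r(T_{\tau}^*)$ for all $r$ with strict inequality for at least one $r$, followed by Abel summation --- this is the displayed proof after Theorem~\ref{generalfun}, and with $\psi(x)=x$ it settles Theorem~\ref{wienr-com} (with the corrected direction) in two lines.
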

 On the other hand, it is natural to  investigate the extremal trees which attain the maximum
Wiener index among all trees with given degree sequences. But this
problem seems to be difficult, since it is discovered that the extremal
trees are not unique and depend on the values of components of degree sequences.
However, there are some partial results. A {\it caterpillar} is a tree with the property that a path remains
if all leaves are deleted and this path is called {\it spine}.
\begin{theorem}\cite{shi1993}\label{shi}
If a tree $T_{\pi}^{\sharp}$ has the maximum Wiener index among all trees in $\mathcal{T_{\pi}}$   for a given tree degree sequence $\pi$, then
$T_{\pi}^{\sharp}$  is a caterpillar.
\end{theorem}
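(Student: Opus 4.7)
The plan is a proof by contradiction via a degree-preserving $2$-switch, analysed with the classical cut formula
\[
W(T) \;=\; \sum_{e \in E(T)} n_1(e)\, n_2(e),
\]
where $n_1(e), n_2(e)$ are the sizes of the two components of $T-e$. Suppose $T_{\pi}^{\sharp}$ maximizes $W$ on $\mathcal{T}_{\pi}$ but is not a caterpillar. Then the subtree of $T_{\pi}^{\sharp}$ obtained by deleting all leaves is not a path, hence has a vertex of degree at least $3$; equivalently, some non-leaf vertex $v$ of $T_{\pi}^{\sharp}$ has at least three non-leaf neighbors $a_1,\ldots,a_k$ with $k\ge 3$. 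Root $T_{\pi}^{\sharp}$ at $v$ and order the $a_i$ so that $A_i:=|T(a_i)|$ is nondecreasing; set $a=a_1$, $b=a_2$, $A=A_1$, $B=A_2$. Counting vertices gives $n=1+A_1+\cdots+A_k+m$, where $m\ge 0$ is the number of leaf neighbors of $v$, so
\[
n-A-B \;\ge\; 1+A_3 \;\ge\; 1+A_1 \;=\; A+1,
\]
i.e.\ the key inequality $2A+B\le n-1$.

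Next I would pick any leaf $\ell$ of $T_{\pi}^{\sharp}$ lying inside $T(a)$ (one exists since $|T(a)|\ge 2$), let $p$ be its parent in the rooting at $v$, and define $T'$ by the $2$-switch that deletes $\{v,b\},\{p,\ell\}$ and adds $\{p,b\},\{v,\ell\}$. Then $T'$ has $n-1$ edges, remains connected, and each of the four affected vertices has unchanged degree, so $T'\in\mathcal{T}_{\pi}$.

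To compute $W(T')-W(T_{\pi}^{\sharp})$, observe that the swap has exactly two structural effects: the subtree $T(b)$ is detached from $v$ and reattached at $p$, while the leaf $\ell$ is detached from $p$ and reattached at $v$. For any edge $e$ off the $v$–$p$ path, $v$ and $p$ lie on the same side of $e$, so both displaced pieces remain on that same side and the cut sizes are unaffected; and the removed/added edge pairs $\{v,b\}\!\leftrightarrow\!\{p,b\}$, $\{p,\ell\}\!\leftrightarrow\!\{v,\ell\}$ contribute the same values $B(n-B)$ and $n-1$ respectively in $T_{\pi}^{\sharp}$ and $T'$. Hence only edges on the $v$–$p$ path contribute to $\Delta W$. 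On each such edge the $p$-side size grows from $s_p(e)\le A$ in $T_{\pi}^{\sharp}$ to $s_p(e)+B-1$ in $T'$ (gain $T(b)$, lose $\ell$), and the identity $f(x+d)-f(x)=d(n-2x-d)$ applied to $f(x)=x(n-x)$ with $d=B-1$ gives
\[
\Delta_e \;=\; (B-1)\bigl(n-2s_p(e)-B+1\bigr) \;\ge\; (B-1)(n-2A-B+1) \;\ge\; 2 \;>\; 0.
\]
Summing over the path yields $W(T')>W(T_{\pi}^{\sharp})$, contradicting the extremality of $T_{\pi}^{\sharp}$.

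The main obstacle is the cut-size bookkeeping for the $2$-switch: one must verify that the edges whose cut values differ between $T_{\pi}^{\sharp}$ and $T'$ are exactly those on the $v$–$p$ path, and that the two displaced pieces cross that path in opposite directions, so the net $p$-side change is the uniform value $B-1$ that produces the clean formula for $\Delta_e$. The hypothesis $k\ge 3$, which is exactly the witness that $v$ sits at a branching of the leaf-deleted subtree, is essential: it is what forces $2A+B\le n-1$ and thereby the strict positivity of every $\Delta_e$.
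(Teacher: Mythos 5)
Your argument is correct and complete: the degree-preserving $2$-switch, the verification that only the cut values of edges on the $v$--$p$ path change (each $p$-side growing by exactly $B-1$), and the inequality $2A+B\le n-1$ forced by the third branch all check out, and they yield a strict increase in $W$, contradicting maximality. The survey states this theorem without proof (citing Shi), so there is no in-paper argument to compare against, but your exchange argument is essentially the standard one used for this result and for its convex-$\psi$ generalization (Theorem \ref{generalmax}).
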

\begin{theorem}\cite{zhangliu2010}\label{optimal-equal}
Let $\pi=(d_0, \cdots, d_{n-1})$ with $d_0\ge\cdots \ge d_{k-1}\ge 2\ge
d_{k+1}=\cdots=d_{n-1}=1$. If a caterpillar $T_{\pi}^\sharp $ has the maximum Wiener index among all trees in $\mathcal{T_{\pi}}$ and the spine is $v_0v_1\cdots v_{k-1}$ with $d(v_0)\ge d(v_{k-1})$, then  there exists a $1\le t\le k-2$ such that
$d(v_0)\ge d(v_1)\ge \cdots\ge d(v_{t})$ and $ d(v_t)\le d(v_{t+1})\le \cdots\le d(v_{k-1})$.
\end{theorem}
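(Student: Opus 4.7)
The proof is a swap argument built on the edge-partition formula
$W(T)=\sum_{e\in E(T)} n_1(e)\,n_2(e)$,
where $n_1(e),n_2(e)$ are the orders of the two components of $T-e$. For a caterpillar $T\in\mathcal{T}_\pi$ with spine $v_0v_1\cdots v_{k-1}$, write $a_i$ for the number of leaves at $v_i$ (so $a_0=d(v_0)-1$, $a_{k-1}=d(v_{k-1})-1$, and $a_i=d(v_i)-2$ for $1\le i\le k-2$). Each leaf edge contributes the constant $n-1$, and the spine edge $v_iv_{i+1}$ contributes $L_i(n-L_i)$ with $L_i=(i+1)+\sum_{j=0}^{i}a_j$; these $L_i$ are strictly increasing in $i$ with $L_i-L_{i-1}=1+a_i$. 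Thus, modulo a constant, $W(T)=\sum_{i=0}^{k-2} L_i(n-L_i)$, and the task reduces to showing that a V-shape arrangement of the spine degrees is forced by extremality of $T^\sharp$.

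I first prove an interior swap lemma: for $1\le i<j\le k-2$, exchanging $d(v_i)$ and $d(v_j)$ yields another caterpillar in $\mathcal{T}_\pi$ in which only $L_l$ for $i\le l\le j-1$ changes, each by exactly $\delta:=a_j-a_i$. A direct expansion gives
\[
W(T')-W(T) \;=\; \delta\Bigl[(j-i)(n-\delta) - 2\sum_{l=i}^{j-1} L_l\Bigr],
\]
and extremality forces this to be $\le 0$ for every interior pair. Next I handle the two endpoint-adjacent swaps; the swap $(v_0,v_1)$ changes only $L_0$ and gives $\Delta W=(d(v_1)-d(v_0))(n-d(v_0)-d(v_1))$. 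Since the spine degrees sum to $n+k-2$ and every spine degree is at least $2$, we get $n>d(v_0)+d(v_1)$ for $k\ge 3$; hence extremality forces $d(v_0)\ge d(v_1)$, and by symmetry $d(v_{k-1})\ge d(v_{k-2})$. It then suffices to show the interior sequence $(d(v_1),\ldots,d(v_{k-2}))$ is V-shaped.

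Assume for contradiction that this interior sequence is not V-shaped; then there exist $1\le i<j<l\le k-2$ with $a_i<a_j$ and $a_l<a_j$. Applying the interior swap lemma to $(i,j)$ (with $\delta>0$) and to $(j,l)$ (with $\delta<0$, so the bracket must be $\ge 0$), then dividing by the interval lengths, gives
\[
\frac{1}{j-i}\sum_{m=i}^{j-1} L_m \;\ge\; \frac{n-a_j+a_i}{2}, \qquad \frac{1}{l-j}\sum_{m=j}^{l-1} L_m \;\le\; \frac{n+a_j-a_l}{2}.
\]
Since $L_m$ is strictly increasing, the left average is at most $L_{j-1}$ while the right average is at least $L_j=L_{j-1}+1+a_j$, so their difference is at least $1+a_j$; but subtracting the two displayed bounds shows this difference is at most $a_j-(a_i+a_l)/2$. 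Hence $1+a_j\le a_j-(a_i+a_l)/2$, which forces $a_i+a_l\le -2$, contradicting $a_i,a_l\ge 0$. Combined with the endpoint step, this gives the V-shape.

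The main obstacles are twofold: first, carefully verifying the bookkeeping in the interior swap lemma (the exact shift pattern of the $L_l$'s and the sign of $\Delta W$ after the telescoping); second, making the combinatorial passage from ``not V-shaped'' to the existence of a strict triple peak $(i,j,l)$, i.e.\ checking that any sequence which is not weakly decreasing-then-increasing contains three indices whose middle value strictly exceeds both outer values. Once these are in place, the interplay between the two extremality inequalities and the intrinsic step $L_j-L_{j-1}=1+a_j$ yields the required contradiction.
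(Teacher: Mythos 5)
Your argument is correct: the edge decomposition $W(T)=\sum_e n_1(e)n_2(e)$, the computation $W(T')-W(T)=\delta[(j-i)(n-\delta)-2\sum_{l=i}^{j-1}L_l]$ for an interior degree swap, the endpoint swap $\Delta W=(d(v_1)-d(v_0))(n-d(v_0)-d(v_1))$ together with $d(v_0)+d(v_1)\le n-k+2<n$, and the final averaging step (left average $\le L_{j-1}$, right average $\ge L_j=L_{j-1}+1+a_j$, forcing $a_i+a_l\le -2$) all check out, and the reduction of ``not V-shaped'' to a strict triple peak is a routine verification. Note that this survey paper states the theorem only with a citation to Zhang--Liu--Han and contains no proof of its own, so the comparison is against that source; your proof is the same kind of exchange argument on the spine-edge contributions used there, executed here as a clean two-interval swap plus averaging rather than a purely local move, and it stands on its own as a complete proof.
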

  Further, Schmuck, Wagner and Wang \cite{schmuckwagner2012}  proposed  and studied a new graph invariant which are based on distance.
\begin{theorem}\label{thm:nina}\cite{schmuckwagner2012}
Let $\pi=(d_0, d_1, \cdots, d_{n-1})$  be a tree degree sequence let $r$ be an arbitrary positive integer. If $p_r(T)$ is the number of pairs $(u,v)$ of vertices such that $d(u,v) \leq r$ for a tree $T\in \mathcal{T}_{\pi}$, then
$p_r(T)\le p_r(T_{\pi}^*)$ with equality  for all $r=1, \cdots, n-1$ if and only if
$T$ is the greedy tree $T_{\pi}^*$.
\end{theorem}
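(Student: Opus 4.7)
The plan is to adapt the greedy-tree switching argument behind Theorems~\ref{lar-smal} and~\ref{wienrmain} to the finer statistic $p_r$. Observe first that if $N_k(T)$ denotes the number of pairs at distance exactly $k$ in $T$, then $p_r(T)=\sum_{k=1}^{r}N_k(T)$, and an Abel summation gives
\[
W(T)=\sum_{k=1}^{n-1}kN_k(T)=(n-1)\binom{n}{2}-\sum_{r=1}^{n-2}p_r(T),
\]
so the statement to be proved is strictly stronger than Theorem~\ref{wienrmain}: a uniform inequality $p_r(T)\le p_r(T_\pi^*)$ for every $r$ implies $W(T)\ge W(T_\pi^*)$. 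This is the target.

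Next I would set up a switch. Root $T$ at a vertex of maximum degree; if $T\ne T_\pi^*$, then by Proposition~\ref{BFRuni} the tree $T$ does not admit a BFS-ordering, so the level-by-level degree multisets of $T$ do not agree with those of $T_\pi^*$. From the lowest level where the mismatch appears one can extract two vertices $u,v$ and child-subtrees $A$ rooted under $u$, $B$ rooted under $v$, for which the swap $S$ that detaches $A$ from $u$ and $B$ from $v$ and reattaches $A$ to $v$ and $B$ to $u$ preserves every vertex degree, hence keeps the resulting tree $T'$ in $\mathcal T_\pi$. The parameters $u,v,A,B$ can be chosen so that $T'$ moves strictly closer to $T_\pi^*$ in a natural partial order on $\mathcal T_\pi$ (for instance the lexicographic order on the level-by-level sequence of subtree-size multisets), and finitely many iterations of $S$ yield $T_\pi^*$.

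The heart of the argument is the claim that $p_r(T')\ge p_r(T)$ for every $r$. Partitioning the pairs $\{x,y\}$ according to the membership of $x,y$ in $A$, $B$, or $R:=V\setminus(A\cup B)$, a direct computation shows that pairs inside $A$, inside $B$, inside $R$, and crossing $A$--$B$ have identical distances in $T$ and $T'$; the only pairs whose distance changes are those with one endpoint in $A\cup B$ and the other in $R$. For $x\in A,\ y\in R$ the distance shifts by $d_R(v,y)-d_R(u,y)$, and for $x\in B,\ y\in R$ by exactly the opposite amount. The main obstacle --- proving the inequality uniformly in $r$ rather than only for the weighted sum that gives $W$ --- amounts to producing, for each fixed $r$, an injection from the set of $T$-pairs of distance $\le r$ into the set of $T'$-pairs of distance $\le r$. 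I would build this injection by matching each $A$-to-$R$ pair lost after the swap with a canonical $B$-to-$R$ pair gained after the swap, using $|B|\ge|A|$ and $d_R(u,y)\le d_R(v,y)$ pointwise in $y$ (both guaranteed by choosing $B$ as the larger subtree attached at the more central vertex $u$). The same rearrangement principle underlies the arguments for Theorems~\ref{adjeigupp} and~\ref{lar-smal}, and should transfer to $p_r$ once the matching is engineered carefully. Strict inequality at some $r$ whenever $T\ne T_\pi^*$ follows because the injection fails to be surjective at the critical radius where some specific lost pair has no matching gained pair, which iterated over the swap sequence yields $p_r(T)<p_r(T_\pi^*)$ for at least one $r$ and so gives the equality characterization.
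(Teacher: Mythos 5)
First, a remark on the comparison itself: the survey states Theorem~\ref{thm:nina} without proof, citing \cite{schmuckwagner2012}, so your proposal can only be measured against the argument in that reference. Your setup is sound as far as it goes: the Abel-summation identity linking $W(T)$ to the $p_r(T)$ is correct, the classification of which pairs change distance under a one-edge branch swap is correct (only $A$--$R$ and $B$--$R$ pairs move, by $\pm\bigl(d_T(v,y)-d_T(u,y)\bigr)$), and the overall shape --- a degree-preserving switch that weakly increases every $p_r$ and strictly advances some potential until the greedy tree is reached --- is indeed the shape of the proof in \cite{schmuckwagner2012}.

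The gap is in the step you yourself identify as the heart of the argument. The claim that $d_R(u,y)\le d_R(v,y)$ holds \emph{pointwise} in $y\in R$ is false in general: every vertex $y$ of $R$ whose path to $u$ passes through $v$ satisfies $d(v,y)<d(u,y)$, so unless $v$ is essentially a leaf of $R$ the set $R$ splits into a part $R_u$ nearer to $u$ and a part $R_v$ nearer to $v$. The swap moves $A$ away from $R_u$ and towards $R_v$ while doing the opposite to $B$, so whether $p_r$ increases depends on comparing the full distance \emph{profiles} of $A$ against $B$ and of $R_u$ against $R_v$; the condition $|B|\ge|A|$ is far too weak. What is actually needed --- and what the key lemma of \cite{schmuckwagner2012} supplies --- is a dominance condition of the form $|\{x\in B: d(x,b)\le i\}|\ge|\{x\in A: d(x,a)\le i\}|$ for every $i$ (with $a,b$ the attachment roots), together with the analogous comparison between the two sides of $R$; only then does an injection from lost pairs to gained pairs exist uniformly in $r$. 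One must further show that for every non-greedy $T\in\mathcal{T}_{\pi}$ a switch satisfying these profile comparisons is available and that the process terminates, which \cite{schmuckwagner2012} achieves by an induction built on rooted level-greedy trees rather than by the lexicographic potential you sketch. As written, your injection cannot be constructed, a swap chosen only by the size criterion can decrease $p_r$ for some $r$, and the equality characterization inherits the same defect.
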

Recently, Schmuck, Wagner and Wang \cite{schmuckwagner2012} proposed a general graph invariant
\begin{equation}W_{\psi}(T)=\sum_{\{u, v\}\subseteq V(G)}\psi(d(u,v))\end{equation}
for any nonnegative function $\psi$.
Further, they proved the following result.
\begin{theorem}\label{generalfun}\cite{schmuckwagner2012}\cite{wagnerwangzhang2013}
Let $\pi$ be a tree degree sequence and $\psi(x)$ be any nonnegative and nondecreasing (resp. nonincreasing)  function on $x$. Then for any tree $T\in \mathcal{T}_{\pi}$,
\begin{equation}W_{\psi}(T)\ge ({\rm resp.}\ \le)\  W_{\psi}(T_{\pi}^*).\end{equation}
 Further, if  $\psi(x)$ is strictly increasing function, then equality holds if and only if $T$ is the greedy tree $T_{\pi}^*$.
\end{theorem}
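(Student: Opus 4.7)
The plan is to reduce Theorem \ref{generalfun} to Theorem \ref{thm:nina} by an Abel (summation-by-parts) rearrangement of $W_\psi$. For any tree $T\in\mathcal{T}_\pi$, every pair of distinct vertices lies at distance at most $n-1$; letting $p_r(T)$ count the unordered pairs with $d(u,v)\le r$ (so $p_0(T)=0$ and $p_{n-1}(T)=\binom{n}{2}$), the number of pairs at distance exactly $r$ is $p_r(T)-p_{r-1}(T)$. I would first rewrite
$$W_\psi(T)=\sum_{r=1}^{n-1}\bigl(p_r(T)-p_{r-1}(T)\bigr)\psi(r)=\binom{n}{2}\psi(n-1)-\sum_{r=1}^{n-2}p_r(T)\bigl(\psi(r+1)-\psi(r)\bigr).$$
The leading term $\binom{n}{2}\psi(n-1)$ is the same for every tree in $\mathcal{T}_\pi$, so only the remaining sum depends on $T$.

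Next I would invoke Theorem \ref{thm:nina}, which gives $p_r(T)\le p_r(T_\pi^*)$ for every $r$. If $\psi$ is nondecreasing, every increment $\psi(r+1)-\psi(r)$ is nonnegative, so the term $-p_r(T)\bigl(\psi(r+1)-\psi(r)\bigr)$ is at least $-p_r(T_\pi^*)\bigl(\psi(r+1)-\psi(r)\bigr)$, and summing yields $W_\psi(T)\ge W_\psi(T_\pi^*)$. If $\psi$ is nonincreasing, the increments are nonpositive and the direction of the inequality reverses to $W_\psi(T)\le W_\psi(T_\pi^*)$.

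For the equality clause, assume $\psi$ is strictly increasing; then every increment $\psi(r+1)-\psi(r)$ is strictly positive, so the identity $W_\psi(T)=W_\psi(T_\pi^*)$ forces $p_r(T)=p_r(T_\pi^*)$ for each $r=1,\dots,n-2$. Combined with the automatic equality at $r=n-1$, this is precisely the hypothesis of the equality clause of Theorem \ref{thm:nina}, which then yields $T\cong T_\pi^*$.

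No serious obstacle arises: once Theorem \ref{thm:nina} is in hand the entire argument is a transparent Abel-summation reduction. The only small point to verify is that $p_{n-1}(T)$ is genuinely $T$-independent (which holds because the diameter of any tree of order $n$ is at most $n-1$), and to observe that the nonnegativity of $\psi$, though part of the hypothesis, plays no role in the argument beyond ensuring that $W_\psi$ is a meaningful graph invariant.
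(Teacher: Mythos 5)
Your proof is correct and is essentially the argument the paper itself relies on: the survey states Theorem \ref{generalfun} as a cited result, but the identity $W_{\psi}(T)=\sum_{k\ge 0}(\psi(k+1)-\psi(k))\left(\frac{n(n-1)}{2}-p_k(T)\right)$ displayed in its proof of the immediately following majorization theorem is exactly your Abel-summation reduction to the $p_r$-comparison of Theorem \ref{thm:nina}, including the observation that the $r=n-1$ term is tree-independent. The handling of the monotone, antitone, and strict-equality cases is all in order, so nothing further is needed.
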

{\bf Remark.~~} If $\psi(x)=x$, then $W_{\psi}(T)$ is the classical Wiener index. If
$\psi(x)=\frac{x(x+1)}{2}$, then $W_{\psi}(T)$  is the hyper-Wiener index \cite{kleinluk1995}. If $\psi(x)=\frac{1}{x}$, then $W_{\psi}(T)$ is the Harary index.
Moreover, it is easy to deduce the following result from \cite{schmuckwagner2012} and
\cite{wagnerwangzhang2013}.
\begin{theorem}
Let $\pi$ and $\tau$ be two different tree degree sequences with $\pi\triangleleft \tau$.
 If  $\psi(x)$ is any nonnegative and strictly increasing (resp. decreasing)  function on $x$, then
 \begin{equation}W_{\psi}(T_{\pi}^*)> ({\rm resp.}\ <)\  W_{\psi}(T_{\tau}^*).\end{equation}
\end{theorem}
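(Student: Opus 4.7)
The plan is to reduce the statement to a single elementary majorization step and then compare $W_\psi$ through the cumulative distance counts already exploited in Theorem~\ref{thm:nina}. By Proposition~\ref{prop}, the relation $\pi \triangleleft \tau$ can be broken into a finite chain of elementary transfers in which two entries of the sequence change by $\pm 1$; since a strict inequality for $W_\psi$ telescopes along such a chain, it suffices to handle the case where $\tau$ arises from $\pi$ by replacing the pair $(d_i,d_j)$ with $(d_i+1,d_j-1)$ for a single pair of indices $i<j$.

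Introduce the cumulative count $p_r(T)=|\{\{u,v\}\subseteq V(T):d_T(u,v)\le r\}|$. A routine Abel summation, using $p_0(T)=0$ and $p_R(T)={n\choose 2}$ for any integer $R$ at least as large as $\max(\mathrm{diam}(T_\pi^*),\mathrm{diam}(T_\tau^*))$, yields $W_\psi(T)=\psi(R){n\choose 2}-\sum_{r=1}^{R-1}(\psi(r+1)-\psi(r))\,p_r(T)$. When $\psi$ is strictly increasing every coefficient $\psi(r+1)-\psi(r)$ is positive, so proving $W_\psi(T_\pi^*)>W_\psi(T_\tau^*)$ reduces to the cumulative-count statement that $p_r(T_\pi^*)\le p_r(T_\tau^*)$ for every $r\ge 1$, with strict inequality for at least one $r$.

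To establish the cumulative-count inequality I would construct a bridging tree $T'\in\mathcal{T}_\tau$ from $T_\pi^*$ by a local edge swap. Label the vertices of $T_\pi^*$ by the BFS-ordering provided by Proposition~\ref{BFRuni}, so that the $i$-th vertex $v_i$ carries degree $d_i$; then choose a pendant neighbor $w$ of $v_j$ in the deepest possible layer, detach the edge $v_jw$, and insert the edge $v_iw$. Only distances involving $w$ change; since $v_i$ precedes $v_j$ in the BFS-ordering, each ball of radius $r$ around $v_i$ in $T_\pi^*$ should contain at least as many vertices as the corresponding ball around $v_j$, which gives $p_r(T_\pi^*)\le p_r(T')$ for every $r$. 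Applying Theorem~\ref{thm:nina} to $T'\in\mathcal{T}_\tau$ then gives $p_r(T')\le p_r(T_\tau^*)$, and chaining the two yields the target comparison. Strictness at some $r$ follows because either $T'\not\cong T_\tau^*$, so Theorem~\ref{thm:nina} is strict somewhere, or at least one distance from $w$ decreases strictly when passing from $T_\pi^*$ to $T'$.

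The main obstacle is the ball-size comparison inside $T_\pi^*$: turning the intuition that ``$v_i$ is more central than $v_j$'' into the domination statement that the ball of radius $r$ around $v_i$ in $T_\pi^*$ contains at least as many vertices as the ball of radius $r$ around $v_j$ for every $r\ge 1$ is the combinatorial heart of the argument, and requires a careful induction on $r$ driven by the BFS-ordering axioms in Definition~\ref{BFS}. A secondary technical point is to guarantee that $v_j$ admits a pendant leaf $w$ to be moved, which may require a preliminary rearrangement within the isomorphism class of $T_\pi^*$. With those facts in hand the strictly decreasing case is immediate, since the coefficients $\psi(r+1)-\psi(r)$ simply flip sign in the Abel identity and the same cumulative-count comparison now yields $W_\psi(T_\pi^*)<W_\psi(T_\tau^*)$.
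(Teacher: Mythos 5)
Your reduction is exactly the one the paper uses: express $W_\psi$ by Abel summation as $\psi(R)\frac{n(n-1)}{2}-\sum_{r}(\psi(r+1)-\psi(r))\,p_r(T)$ and observe that the whole theorem then rests on the single comparison $p_r(T_\pi^*)\le p_r(T_\tau^*)$ for all $r$, with strict inequality for at least one $r$. The difference lies in how that comparison is obtained. The paper does not prove it; it quotes it as a known theorem of \cite{wagnerwangzhang2013} (the majorization companion of Theorem~\ref{thm:nina}) and finishes in three lines. You instead try to derive it from scratch via Proposition~\ref{prop} and a leaf-relocation argument, and that derivation is where your proposal has a genuine gap rather than being a complete alternative route.

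Two steps are asserted but not established, and they are precisely the hard part. First, the ball-domination claim --- that $v_i\prec v_j$ in the BFS-ordering of $T_\pi^*$ forces $|B_r(v_i)|\ge|B_r(v_j)|$ for every $r$ --- is essentially the entire content of the lemma you are trying to prove; you yourself label it ``the main obstacle'' and offer only the intention of ``a careful induction on $r$.'' Until that induction is carried out, nothing is proved. Second, the relocation step is not well defined as stated: the vertex whose degree must drop need not have a pendant neighbour (already for $\pi=(3,3,3,3,3,3,3,1,\dots,1)$ the second BFS vertex of $T_\pi^*$ has only degree-$3$ neighbours), and since Proposition~\ref{BFRuni} says the greedy tree is unique up to isomorphism, a ``preliminary rearrangement within the isomorphism class'' cannot create a leaf where there is none. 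You would instead have to move a leaf between two other vertices realizing the degrees $d_j$ and $d_i$, and then the ball comparison you need is between those two vertices, not between $v_i$ and $v_j$, so the statement to be proved has to be reformulated accordingly. If you want a self-contained argument, the honest course is to prove the monotonicity of every $p_r$ under a single unit transfer in full (this is in substance what \cite{schmuckwagner2012} and \cite{wagnerwangzhang2013} do); otherwise do what the paper does and cite it. As written, your argument establishes the theorem only modulo the very inequality it was meant to supply.
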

\begin{proof} Let $p_r(T)$ be the number of pairs $\{u, v\}$ of vertices with $d(u, v)\le r$. By Theorem in \cite{wagnerwangzhang2013}, $p_r(T_{\pi}^*)\le p_r(T_{\tau}^*)$ for $r=1, \cdots, n$ with
strict  inequality for at least one $ r$.
Observe that
$$W_{\psi}(T_{\pi}^*)=\sum_{k\ge 0}(\psi(k+1)-\psi(k))(\frac{n(n-1)}{2}-p_k(T_{\pi}^*))$$
and $$W_{\psi}(T_{\tau}^*)=\sum_{k\ge 0}(\psi(k+1)-\psi(k))(\frac{n(n-1)}{2}-p_k(T_{\tau}^*)).$$
If  $\psi(x)$ is  nonnegative and strictly increasing, then $W_{\psi}(T_{\pi}^*)>   W_{\psi}(T_{\tau}^*).$
\end{proof}

\begin{theorem}\label{generalmax}\cite{schmuckwagner2012}
Let $\pi$ be a tree degree sequence and   $\psi(x)$ be
 strictly increasing and convex function  on $x$. If $T_{\pi}^\sharp$ is a tree that
maximizes $W_{\psi}(T)$ among all trees in the set $\mathcal{T}_{\pi}$, then $T_{\pi}^\sharp$ is a caterpillar.
\end{theorem}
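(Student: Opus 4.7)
The plan is to argue by contradiction: suppose $T^\sharp\in\mathcal{T}_\pi$ achieves $\max W_\psi$ but is not a caterpillar, and produce $T'\in\mathcal{T}_\pi$ with $W_\psi(T')>W_\psi(T^\sharp)$. Fix a longest path $P=v_0v_1\cdots v_d$ of $T^\sharp$; both $v_0$ and $v_d$ are leaves. Since $T^\sharp$ is not a caterpillar, the subgraph obtained by deleting every leaf is a tree that is not a path, so some non-leaf vertex $u$ of $T^\sharp$ lies off $P$. Selecting such $u$ at minimum distance from $P$, the vertex $u$ is adjacent to an interior vertex $v_i$ of $P$; let $S$ denote the component of $T^\sharp-v_iu$ containing $u$, so $|V(S)|\ge 2$. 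After swapping endpoints of $P$ if necessary, assume $i\le d/2$. The longest-path hypothesis then forces every off-$P$ branch at $v_1$ to be a single pendant leaf and the depth of $S$ to be at most $i$.

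The swap I use is the following: delete the edges $v_iu$ and $v_0v_1$ from $T^\sharp$ and insert the edges $v_iv_0$ and $v_1u$. Each of the four affected vertices loses and gains exactly one incident edge, so the degree sequence is preserved; the three components $\{v_0\}$, $V(S)$, and $A:=V(T^\sharp)\setminus(\{v_0\}\cup V(S))$ produced by the deletion are reconnected by the two new edges into a tree $T'\in\mathcal{T}_\pi$. I then tabulate which pairs of vertices have changed distance. Pairs entirely inside $A$ or entirely inside $V(S)\cup\{v_0\}$ are unchanged, and pairs $(v_0,y)$ with $y\in V(S)$ happen to have the same distance $i+1+d_S(u,y)$ in both trees. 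The only changed pairs are $(v_0,a)$ and $(a,y)$ with $a\in A$, $y\in V(S)$. Writing $\alpha_a:=d_A(v_1,a)+1$ and $\beta_a:=d_A(v_i,a)+1$, a short cancellation (in which the $y=u$ term of the $(a,y)$-sum cancels the $(v_0,a)$-sum) yields
\begin{equation*}
W_\psi(T')-W_\psi(T^\sharp)=\sum_{y\in V(S)\setminus\{u\}}\sum_{a\in A}\bigl[\psi(\alpha_a+d_S(u,y))-\psi(\beta_a+d_S(u,y))\bigr].
\end{equation*}

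It remains to show each inner sum is strictly positive. Decompose $A$ along the $v_1$-to-$v_i$ path (of length $k=i-1$) into pieces $B_0,B_1,\ldots,B_k$, with $B_j$ attached at $v_{j+1}$. Every vertex in $B_j$ satisfies $\alpha-\beta=2j-k$ independently of its internal depth, which lets one pair a vertex of $B_j$ at depth $t$ with a vertex of $B_{k-j}$ at depth $t$: each such pair contributes zero to the inner sum, because the $(\alpha,\beta)$ values are merely transposed. Any unpaired vertex on the $v_i$-side (index $j>k/2$) contributes a strictly positive term $\psi((k-j)+t+1+d_S(u,y))-\psi(j+t+1+d_S(u,y))>0$ by strict monotonicity of $\psi$; for depth-mismatched remainders inside matched $B_j,B_{k-j}$, convexity of $\psi$ together with a Karamata-type rearrangement yields the right inequality. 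The longest-path constraint together with $i\le d/2$ guarantees $|B_k|>|B_0|$ (since $B_0$ contains only $v_1$ and its pendants while $B_k$ contains $v_i,v_{i+1},\ldots,v_d$ and all their off-$P$ branches in $A$), so at least one strictly positive unpaired term is always present. Summing over the nonempty set $V(S)\setminus\{u\}$ gives $W_\psi(T')>W_\psi(T^\sharp)$, contradicting the extremality of $T^\sharp$.

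The main obstacle is the final step: the pairing/majorization argument. Although matched pairs cancel cleanly when $B_j$ and $B_{k-j}$ share identical depth profiles, handling intermediate pieces $B_1,\ldots,B_{k-1}$ (which exist only when $i\ge 3$) requires uniform control via the longest-path inequalities, in particular the bound that every branch at $v_{j+1}$ has depth at most $\min(j+1,d-j-1)$. The role of convexity of $\psi$ is to tame residual depth mismatches between matched $B_j,B_{k-j}$, while strict monotonicity converts the size excess $|B_k|>|B_0|$ into a strict inequality. Convexity is exactly what is needed to extend beyond Shi's original linear ($\psi(x)=x$) case, where all paired contributions vanish identically and only the unpaired surplus drives the comparison.
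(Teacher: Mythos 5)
The paper itself contains no proof of this theorem; it is a survey statement cited from \cite{schmuckwagner2012}, so your argument has to stand on its own. Your setup is correct up to and including the displayed formula: the swap preserves the degree sequence and connectivity, and the bookkeeping of changed pairs (including the neat cancellation of the $(v_0,y)$ pairs against the $y=u$ terms) is right. The fatal problem is the final claim that each inner sum $\sum_{a\in A}\bigl[\psi(\alpha_a+c)-\psi(\beta_a+c)\bigr]$ is positive. This is false, and the specific assertion you use to force a positive surplus --- that $|B_k|>|B_0|$ because ``$B_0$ contains only $v_1$ and its pendants'' --- does not follow from anything: $v_1$ may carry arbitrarily many pendant leaves. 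Concretely, take $P=v_0v_1\cdots v_6$, attach ten pendant leaves to $v_1$ and a path $u,u'$ to $v_3$ (so $i=3\le d/2$, $P$ is still a longest path, and $u$ is the unique non-leaf vertex off $P$). Then $|B_0|=11>4=|B_k|$, the set $A$ has $16$ vertices with $\sum_a d_A(v_1,a)=25$ while $\sum_a d_A(v_3,a)=39$, so already for $\psi(x)=x$ your inner sum equals $25-39=-14<0$ and the swap strictly \emph{decreases} the Wiener index. Since $\sum_a\alpha_a<\sum_a\beta_a$, no weak-majorization or Karamata-type argument can rescue the inequality either. (There is also a sign slip: your ``strictly positive unpaired term'' $\psi((k-j)+t+1+c)-\psi(j+t+1+c)$ is in fact negative for $j>k/2$, which suggests the direction of the comparison got inverted somewhere.)

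The underlying issue is that the direction of the exchange cannot be fixed in advance: whether the off-path branch $S$ should migrate toward $v_1$ or toward the other end depends on the distance profiles of $v_1$ and $v_i$ inside $A$, which your longest-path constraints do not control. The published arguments (Shi for $\psi(x)=x$, and \cite{schmuckwagner2012} in general) proceed differently: at a vertex witnessing non-caterpillarity one considers a \emph{pair} of opposite branch-exchanges and uses convexity of $\psi$ to show that the two resulting changes in $W_{\psi}$ cannot both be negative, so at least one move can be performed without decreasing $W_{\psi}$ while strictly reducing a potential that is zero exactly on caterpillars. To repair your proof you would at minimum have to make the choice of move adaptive in this way; the single fixed move you propose does not suffice.
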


In order to study total $\pi$-electron energy
on molecular structure, Gutman and Trinajsti\'{c} \cite{gutmantrin1972} proposed the {\it second Zagreb index}, which is defined to be
\begin{equation} ZA(T)=\sum_{uv\in E(T)}d(u)d(v),\end{equation}
where $d(u)$ and $d(v)$ are  the degrees of vertices $u$ and $v$, respectively.
\begin{theorem}\label{zagreb}\cite{schmuckwagner2012}
Let $\pi$ be a tree degree sequence. Then the greedy tree $T_{\pi}^*$ is the only tree having the maximum second zagreb index among all trees in $T\in \mathcal{T}_{\pi}$, i.e., for any tree $T\in \mathcal{T}_{\pi}$,
\begin{equation}ZA(T)\le ZA(T_{\pi}^*)\end{equation}
with equality if and only if $T$ is the greedy tree $T_{\pi}^*$.
\end{theorem}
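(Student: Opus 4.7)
The plan is to adapt the local-switching methodology used for the spectral radius bound in Theorem~\ref{adjeigupp}. The key observation is that $ZA$ is an edge-sum of degree products, so only the multiset of pairs $(d(u),d(v))$ appearing at edges matters; the greedy tree should maximise this via a rearrangement-inequality effect, pushing high-degree vertices next to other high-degree vertices.

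First, I would set up a degree-sequence-preserving subtree swap. Root a tree $T\in\mathcal{T}_{\pi}$ at a vertex of maximum degree, and let $u,v$ be the roots of two vertex-disjoint rooted subtrees $T_u, T_v$ attached at distinct parents $p,q$. The operation deleting the edges $pu,qv$ and inserting the edges $pv,qu$ keeps the graph a tree (because $T_u$ and $T_v$ are disjoint) and leaves every vertex degree unchanged, while the second Zagreb index changes by
\begin{equation*}
\Delta ZA \;=\; d(p)d(v)+d(q)d(u)-d(p)d(u)-d(q)d(v) \;=\; (d(p)-d(q))(d(v)-d(u)),
\end{equation*}
which is strictly positive whenever $d(p)>d(q)$ and $d(v)>d(u)$.

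Next, I would observe that if $T\in\mathcal{T}_{\pi}$ is not already swap-terminal, then we can iteratively apply such improving swaps; the process must terminate because $ZA$ takes only finitely many values on $\mathcal{T}_{\pi}$ and strictly increases at each step. The terminal tree $T^{\sharp}$ satisfies the condition that, for every pair of disjoint rooted subtrees with roots $u,v$ and parents $p,q$, one has $(d(p)-d(q))(d(v)-d(u))\le 0$; that is, children of a higher-degree parent have no smaller degree than children of a lower-degree parent. A layer-by-layer induction, starting at the root and ordering vertices inside each layer by decreasing degree, then shows that $T^{\sharp}$ admits a well-ordering satisfying conditions (1)--(3) of Definition~\ref{BFS}, i.e.\ a BFS-ordering. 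Invoking Proposition~\ref{BFRuni}, $T^{\sharp}$ is isomorphic to the greedy tree $T_{\pi}^{\ast}$, so $ZA(T)\le ZA(T_{\pi}^{\ast})$.

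The main obstacle is the layer-by-layer induction producing a BFS-ordering from swap-terminality: one must simultaneously handle ties in degree (where a swap yields $\Delta ZA=0$ and therefore does not help directly, yet some structural rearrangement may still be required) and verify condition (3) of Definition~\ref{BFS} concerning crossing edges between consecutive layers. To squeeze out the ``only if'' part of the equality case, for each degree-tied swap with $\Delta ZA = 0$ one needs a secondary invariant (for instance the lexicographic degree sequence read along a fixed BFS traversal) that strictly improves under every nontrivial reconfiguration, forcing the terminal tree to be the unique greedy tree up to isomorphism. This last delicate step parallels the tie-breaking argument used in \cite{Biyikoglu2008} for the spectral-radius extremal case and is where most of the bookkeeping lives.
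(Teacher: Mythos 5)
Your swap operation and the computation $\Delta ZA=(d(p)-d(q))(d(v)-d(u))$ are correct, and this exchange is indeed the natural engine here; note also that the survey gives no proof of this theorem (it only cites \cite{schmuckwagner2012}), so there is no internal argument to compare against. The genuine gap is your step from swap-terminality to a BFS-ordering. Take $\pi=(3,3,3,3,1,1,1,1,1,1)$ and let $C$ be the caterpillar whose spine $u_1u_2u_3u_4$ consists of the four degree-$3$ vertices, with two pendant leaves at $u_1$, one at $u_2$, one at $u_3$, and two at $u_4$. In any admissible swap in $C$ the two parents $p,q$ must be internal vertices, hence both have degree $3$, so every swap gives $\Delta ZA=(3-3)(d(v)-d(u))=0$ and $C$ is swap-terminal. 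Yet $C$ admits no BFS-ordering: whichever maximum-degree vertex is taken as root, layer $1$ contains a leaf while layer $2$ contains a degree-$3$ vertex, violating conditions (1)--(2) of Definition~\ref{BFS}. So $C$ is not the greedy tree and your layer-by-layer induction cannot go through. Worse, $ZA(C)=3\cdot 9+6\cdot 3=45=ZA(T_{\pi}^*)$; indeed the internal vertices of any tree induce a subtree, so every $T\in\mathcal{T}_{\pi}$ has exactly $k-1$ internal--internal edges and $n-k$ leaf edges, and when all internal degrees are equal $ZA$ is constant on $\mathcal{T}_{\pi}$. Consequently the uniqueness assertion, as reproduced in the survey, is false for such $\pi$, and no secondary tie-breaking invariant can rescue the ``only if'' direction you sketch at the end.

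The inequality itself can be salvaged by your method, but not by iterating strictly improving swaps and then classifying the terminal trees. Instead, for an arbitrary $T\in\mathcal{T}_{\pi}$ you should exhibit an explicit sequence of degree-preserving subtree swaps that terminates at $T_{\pi}^*$ and in which every step has $\Delta ZA\ge 0$: build the greedy tree level by level, at each stage using a swap to make the largest-degree unplaced vertex a child of the earliest eligible parent; since that parent has degree at least that of the parent it is swapped against, and the moved subtree's root has degree at least that of the subtree it displaces, each step satisfies $(d(p)-d(q))(d(v)-d(u))\ge 0$. This yields $ZA(T)\le ZA(T_{\pi}^*)$ for all $T\in\mathcal{T}_{\pi}$, which is the part of the statement that is actually true; the equality characterization must be weakened rather than proved.
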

Let $G$ be a simple graph. Denoted by $m(G, k)$ the number of $k-$matchings  in $G$ and $m(G, 0)=1$. The {\it Hosoya index}, named after Haruo Hosoya, is defined to be
\begin{equation}Z(G)=\sum_{k\ge 0}m(G, k)\end{equation}
Denote by $i(G, k)$ the number of $k-$independent sets in $G$  and $i(G, 0)=1$. The {\it Merrifield-Simmons index} is defined to be
\begin{equation}MS(G)=\sum_{k\ge 0}i(G, k)\end{equation}
The {\it energy of graph} is defined by the sum of the absolute values of all the eigenvalues of $A(G)$, and denoted by $En(G)$.
Andriantiana \cite{andriantiana2013} proved that the greedy tree has minimum energy, Hosoya index and maximum Merrifield-Simmons index.

\begin{theorem}\label{en-Z-MS}\cite{andriantiana2013}
Let $\pi$ be a tree degree sequence. Then the greedy tree $T_{\pi}^*$ is the only tree having  minimum energy, minimum Hosoya index and maximum Merrifield-Simmons indices, respectively  among all trees in $T\in \mathcal{T}_{\pi}$, i.e., for any tree $T\in \mathcal{T}_{\pi}$,
\begin{equation} Z(T)\ge Z(T_{\pi}^*),\ \ En(T)\ge En(T_{\pi}^*),\ \
 MS(T)\le MS(T_{\pi}^*),\end{equation}
with  any one equality
if and only if $T$ is the greedy tree $T_{\pi}^*$.
\end{theorem}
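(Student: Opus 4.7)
The plan is to derive all three inequalities from a single underlying pointwise comparison: for every $T \in \mathcal{T}_{\pi}$ and every $k \ge 0$,
$$m(T_{\pi}^*, k) \le m(T, k) \qquad \text{and} \qquad i(T_{\pi}^*, k) \ge i(T, k),$$
with equality for all $k$ only when $T = T_{\pi}^*$. The Hosoya and Merrifield--Simmons statements then follow immediately by summing over $k$. For the energy, one invokes the Coulson integral representation, which for a tree expresses $En(T)$ as a strictly monotone functional of the sequence $(m(T,k))_{k\ge 0}$; this converts the pointwise inequalities on matching numbers into $En(T) \ge En(T_{\pi}^*)$, strictly when the matching sequences differ.

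To set this up, I would introduce two quasi-orders on trees of the same size: $T' \preceq_m T$ iff $m(T',k) \le m(T,k)$ for all $k$, and $T' \preceq_i T$ iff $i(T',k) \le i(T,k)$ for all $k$. The workhorse is the pair of vertex-deletion recursions
$$m(T,k) = m(T-v,k) + \sum_{u \sim v} m(T - v - u,\, k-1),$$
$$i(T,k) = i(T-v,k) + i(T - N[v],\, k-1),$$
expressed in terms of the matching and independence generating polynomials $M_T(x)=\sum_k m(T,k)x^k$ and $I_T(x)=\sum_k i(T,k)x^k$. Using these, I would prove a \emph{switching lemma}: if $T \in \mathcal{T}_{\pi}$ fails to admit a BFS-ordering, then there is a vertex pair violating one of the clauses in Definition~\ref{BFS} (a subtree attached at a lower-degree or lower-priority vertex is larger than one attached at a higher-priority vertex). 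Swapping the two offending subtrees produces a new tree $T' \in \mathcal{T}_{\pi}$ satisfying $m(T',k) \le m(T,k)$ and $i(T',k) \ge i(T,k)$ for every $k$, with strict inequality at some $k$ as soon as $T'\neq T$.

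I would then iterate this operation. Each switch strictly decreases a suitable integer-valued potential (for instance a lexicographic score of the vertices arranged by height and degree), so after finitely many steps the process terminates at a tree admitting a BFS-ordering. By Proposition~\ref{BFRuni}, that tree must be the greedy tree $T_{\pi}^*$. Thus every $T \in \mathcal{T}_{\pi}$ is connected to $T_{\pi}^*$ by a finite chain of switches, each of which weakly improves both invariants in the claimed directions; chaining the inequalities gives $Z(T) \ge Z(T_{\pi}^*)$, $MS(T) \le MS(T_{\pi}^*)$, and, through Coulson, $En(T) \ge En(T_{\pi}^*)$. Uniqueness follows because a nontrivial switch always produces a strict gap at some $k$, and the greedy tree itself is uniquely determined by $\pi$.

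The main obstacle I anticipate is the switching lemma itself: proving \emph{simultaneously}, and with the opposite monotonicity directions, that the swap weakly decreases every $m(T,k)$ while weakly increasing every $i(T,k)$. The cleanest route is to write $M_T(x) - M_{T'}(x)$ and $I_{T'}(x) - I_T(x)$ as explicit polynomials in $x$ whose coefficients factor through the two detached branches and their ambient environments, and then to check by induction on the total size of the two swapped subtrees that all coefficients are nonnegative. Organizing this double monotonicity uniformly over both invariants, and carefully tracking where strict inequality first appears in order to pin down uniqueness, is where essentially all of the technical work lies.
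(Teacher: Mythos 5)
The survey you are reading does not actually prove this theorem: it is quoted verbatim from Andriantiana's paper with a citation and no argument, so there is no in-paper proof to compare against. That said, your outline reproduces the architecture of the known proof correctly at the level of reductions: everything does come down to the coefficientwise comparisons $m(T_{\pi}^*,k)\le m(T,k)$ and $i(T_{\pi}^*,k)\ge i(T,k)$; the Hosoya and Merrifield--Simmons claims follow by summing over $k$; and for trees the Coulson integral $En(T)=\frac{2}{\pi}\int_0^{\infty}x^{-2}\ln\bigl(\sum_{k}m(T,k)x^{2k}\bigr)\,dx$ is strictly increasing in each $m(T,k)$, so the energy claim follows from the matching comparison. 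Those steps are sound.

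The genuine gap is the switching lemma, which you state but do not prove, and which in the bald form you give it is not true without substantial further work. If $T$ violates one of the BFS conditions, swapping the two ``offending'' branches does not automatically push every $m(T,k)$ down and every $i(T,k)$ up: whether the swap helps or hurts depends on comparing the two detached branches, \emph{as rooted trees}, in a quasi-order built from pairs of polynomials (roughly, one needs the coefficients of expressions like $M_{B_1}(x)M_{B_2-w}(x)-M_{B_1-v}(x)M_{B_2}(x)$, and the analogous independence-polynomial expressions with closed neighborhoods, to have a fixed sign), and one must separately know which of the two ambient attachment sites is ``larger'' in the same quasi-order. Establishing that the greedy arrangement is extremal for this quasi-order requires its own induction on rooted branches; this is precisely the content of Andriantiana's exchange lemmas and is where the entire proof lives, yet your proposal explicitly defers it. A second, smaller gap: the theorem asserts that \emph{any one} equality forces $T=T_{\pi}^*$, so after a nontrivial switch you need strictness at some $k$ in the matching sequence \emph{and} strictness at some $k$ in the independence sequence; strictness in one does not imply strictness in the other, and your potential-function argument only guarantees that the tree changed. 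As written, the proposal is a correct road map with the destination unproven.
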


\begin{theorem}\cite{andriantiana2013}\label{En-Z-MS-com}
Let $\pi$ and $\tau$ be two different tree degree sequences. Then
 the greedy trees $T_{\pi}^*$ and $T_{\tau}^* $ are the  minimum energy, minimum Hosoya index and maximum Merrifield-Simmons indices
  in ${\mathcal T}_{\pi}$ and ${\mathcal
T}_{\tau}$  respectively.
 If
$\pi\triangleleft \tau$, then 
\begin{equation} En(T_{\pi}^*)>
En(T_{\tau}^*),\  \ Z(T_{\pi}^*)>
Z(T_{\tau}^*),\ \ MS(T_{\pi}^*)<
MS(T_{\tau}^*).\end{equation}
\end{theorem}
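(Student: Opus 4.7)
The plan is to reduce to an elementary majorization step via Proposition~\ref{prop} and then compare $T_{\pi}^*$ to the greedy tree $T_{\tau}^*$ through a single local branch-shift. By Proposition~\ref{prop}, I can factor $\pi \triangleleft \tau$ into a chain of one-step moves, so by transitivity it suffices to prove the three strict inequalities when $\tau$ is obtained from $\pi$ by raising one entry $d_i$ by one and lowering another entry $d_j$ by one (with $d_i \ge d_j \ge 2$ so that both sequences remain nonincreasing graphic tree sequences). I assume this henceforth.

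In the greedy tree $T_{\pi}^*$ let $u$ and $v$ be the vertices in the BFS positions that correspond to the two altered entries, so $\deg(u)=d_i$ and $\deg(v)=d_j$, and the BFS-ordering puts $u$ before $v$. Since $v$ is not a leaf, it has a neighbor $w$ lying in the component of $T_{\pi}^*-v$ not containing $u$. Let $T^{\circ}$ be the tree obtained from $T_{\pi}^*$ by deleting the edge $vw$ and inserting the edge $uw$; then $T^{\circ}$ is connected and has degree sequence $\tau$, so $T^{\circ}\in\mathcal{T}_{\tau}$. I then claim
\[
Z(T^{\circ}) < Z(T_{\pi}^*),\qquad En(T^{\circ}) < En(T_{\pi}^*),\qquad MS(T^{\circ}) > MS(T_{\pi}^*).
\]
Applying the matching polynomial recurrence at the shifted edge in each tree and noting that $T_{\pi}^*-vw$ equals $T^{\circ}-uw$ as graphs, the difference of matching polynomials reduces to a comparison of the matching polynomials of $T_{\pi}^*-u-w$ and $T_{\pi}^*-v-w$. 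The BFS structure (which places $u$ earlier and with larger degree than $v$) should give coefficient-wise $m(T_{\pi}^*-u-w,k) \ge m(T_{\pi}^*-v-w,k)$ for every $k$, with strict inequality for at least one $k$. This yields $Z(T_{\pi}^*) > Z(T^{\circ})$, and via Coulson's integral formula, $En(T_{\pi}^*) > En(T^{\circ})$. The analogous recurrence for the independence polynomial, together with the dual coefficient-wise comparison $i(T_{\pi}^*-u-w,k) \le i(T_{\pi}^*-v-w,k)$, gives $MS(T_{\pi}^*) < MS(T^{\circ})$.

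Combining with Theorem~\ref{en-Z-MS}, which asserts that $T_{\tau}^*$ minimizes $En$ and $Z$ and maximizes $MS$ on $\mathcal{T}_{\tau}$, we conclude $En(T_{\tau}^*) \le En(T^{\circ}) < En(T_{\pi}^*)$, $Z(T_{\tau}^*) \le Z(T^{\circ}) < Z(T_{\pi}^*)$, and $MS(T_{\tau}^*) \ge MS(T^{\circ}) > MS(T_{\pi}^*)$, as desired.

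The hard part is the coefficient-wise comparison of the matching and independence numbers of the subforests $T_{\pi}^*-u-w$ and $T_{\pi}^*-v-w$. The specific choice of $w$ on a branch of $v$ away from $u$, combined with the greedy BFS placement that forces $\deg(u) \ge \deg(v)$ and pushes higher-degree vertices into earlier layers, is what makes this comparison tractable; verifying it rigorously likely requires an induction on BFS-layers or a direct appeal to the polynomial quasi-orders developed in the proof of Theorem~\ref{en-Z-MS}.
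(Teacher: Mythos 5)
The paper states this theorem only as a citation to \cite{andriantiana2013} and contains no proof of its own, so I can only judge your argument on its merits. Your skeleton --- factor $\pi\triangleleft\tau$ into unit steps via Proposition~\ref{prop}, realize one unit step by a single edge shift $T^{\circ}=T_{\pi}^*-vw+uw$ landing in $\mathcal{T}_{\tau}$, and then sandwich with the extremality of $T_{\tau}^*$ from Theorem~\ref{en-Z-MS} --- is sound and is the standard route for majorization comparisons of this kind.

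However, your key inequality points the wrong way. From $m(G,k)=m(G-e,k)+m(G-x-y,k-1)$ applied at $vw$ in $T_{\pi}^*$ and at $uw$ in $T^{\circ}$, together with $T_{\pi}^*-vw=T^{\circ}-uw$ and $T^{\circ}-u-w=T_{\pi}^*-u-w$, one gets
$m(T_{\pi}^*,k)-m(T^{\circ},k)=m(T_{\pi}^*-v-w,k-1)-m(T_{\pi}^*-u-w,k-1)$.
So to conclude $Z(T_{\pi}^*)>Z(T^{\circ})$ you need $m(T_{\pi}^*-v-w,k)\ge m(T_{\pi}^*-u-w,k)$, i.e.\ deleting the \emph{higher}-degree vertex $u$ destroys \emph{more} matchings --- the exact reverse of what you assert. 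Your stated inequality is in fact false already at $k=1$: since $v$ is adjacent to $w$ but $u$ is not, $T_{\pi}^*-v-w$ has $\deg(u)-\deg(v)+1\ge 1$ more edges than $T_{\pi}^*-u-w$. As written, your lemma would yield $Z(T_{\pi}^*)\le Z(T^{\circ})$, the opposite of what you want. Beyond this sign error, two further points: (i) for the Merrifield--Simmons comparison the edge recurrence for independence polynomials leads to counting independent sets of $F=T_{\pi}^*-vw$ that \emph{contain} both endpoints of the shifted edge, so the relevant forests are $F-N[u]-N[w]$ versus $F-N[v]-N[w]$ (closed neighbourhoods), not $T_{\pi}^*-u-w$ versus $T_{\pi}^*-v-w$; and (ii) even with the directions corrected, the coefficient-wise domination between these two vertex-deleted forests is the entire substance of the theorem and you only assert it (``should give''). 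Establishing it requires the rooted-forest quasi-order machinery of \cite{andriantiana2013}; without it the proposal is a plausible outline with its central step both unproved and, as currently stated, reversed.
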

In addition, there are the following results on the number of subtrees of a tree.
 \begin{theorem}\cite{zhangxiu2013}\label{theorem2-1}
   With a given degree sequence $\pi$, $T_{\pi}^*$ is the unique
   tree with the maximum number of subtrees in ${\mathcal{T}}_{\pi}$.
  \end{theorem}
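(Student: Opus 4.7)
The plan is to adapt the rooted-subtree product formula that underlies the greedy-tree proofs for the Hosoya, Merrifield--Simmons and Wiener indices. Root every $T\in\mathcal{T}_\pi$ at a vertex $v_0$ of maximum degree and, for each vertex $v$, let $g_T(v)$ denote the number of subtrees of $T$ whose unique topmost vertex (the one closest to $v_0$) is $v$. Since every subtree has exactly one topmost vertex, one has
\begin{equation}
N(T) \;=\; \sum_{v\in V(T)} g_T(v), \qquad g_T(v) \;=\; \prod_{c\in\mathrm{child}(v)}\bigl(1+g_T(c)\bigr),
\end{equation}
with $g_T(v)=1$ at a leaf. All subsequent reasoning rests on these two identities and on the observation that $g_T(v)$ depends only on the rooted subtree hanging below $v$.

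Next I would introduce two local modifications that preserve $\pi$ and strictly increase $N(T)$ whenever they apply. The first is a \emph{sibling rearrangement}: if two vertices $u,x$ lie in the same layer with $d_T(u)<d_T(x)$ (or with $d_T(u)=d_T(x)$ but the children of $u$ carry larger $g_T$-values than those of $x$), redirect children so that the branches with the largest $g_T$-values hang under the vertex of higher degree. A direct comparison of the two products $\prod_c(1+g_T(c))$ before and after the swap, using the rearrangement/AM--GM inequality for products of the form $(1+a)(1+b)$ with $a\ge b$, shows that both $g_T(u)$ and $g_T(x)$ weakly increase and at least one strictly. The product formula then propagates a strict increase up to every ancestor of $u,x$, and hence to $N(T)$. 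The second move is a \emph{Kelmans-type edge switch} enforcing condition~(3) of Definition~\ref{BFS}: whenever $uv,xy\in E(T)$, $uy,xv\notin E(T)$, and $h(u)=h(x)=h(v)-1=h(y)-1$ with $u\prec x$ but $y\prec v$, replace $\{uv,xy\}$ by $\{uy,xv\}$. Since this transplants the branch with larger $g_T$ closer to the root, the same product identity yields a strict increase of $N(T)$ with $\pi$ unchanged.

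With these two moves in hand, the proof closes as follows. A tree that admits no improvement under either move must satisfy conditions~(1)--(3) of Definition~\ref{BFS}, i.e.\ it admits a BFS-ordering; by Proposition~\ref{BFRuni} such a tree in $\mathcal{T}_\pi$ is unique up to isomorphism and equals $T_\pi^*$. Conversely either local move strictly increases $N$, so any maximiser must coincide with $T_\pi^*$.

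The main obstacle will be the bookkeeping for the sibling rearrangement, because the two vertices being compared typically carry subtrees of different shapes rather than a clean pair of children to swap, so several exchanges may be needed before a strict gain is visible. The cleanest way to handle this is to prove by induction on the height of the rooted tree that, for any fixed multiset of rooted subtrees hanging off a vertex, the value of $g_T$ at that vertex is maximised precisely by the greedy arrangement, and to invoke Theorem~\ref{lar-smal} on the $\phi$-vector to rule out nonisomorphic ties; this delivers the uniqueness assertion $T\cong T_\pi^*$ globally.
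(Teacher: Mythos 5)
Your starting point---the decomposition $N(T)=\sum_{v}g_T(v)$ with $g_T(v)=\prod_{c}(1+g_T(c))$ over the children of $v$---is sound and is indeed the engine behind the proof in the cited source. The fatal problem is that the two local moves you build on it are too weak to finish the argument: both the same-layer redistribution of children and the Kelmans-type switch between two vertices of equal height leave every vertex at its original height and with its original degree, so the entire height--degree profile $v\mapsto(h(v),d(v))$ is invariant under them. Condition (2) of Definition~\ref{BFS}, however, forces degrees to be nonincreasing \emph{across} layers, and a tree can violate this while admitting neither of your moves, so ``no improvement possible'' does not imply a BFS-ordering. Concretely, for $\pi=(3,2,2,1,1,1)$ take the broom $B$ whose degree-$3$ root has two leaf children and one child of degree $2$ followed by a path of length $2$: in layer $1$ only one vertex has any children, so neither a redistribution nor a switch is available, yet $B$ is not the greedy tree and $N(B)=24<25=N(T_\pi^*)$. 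The missing ingredient is a transformation that exchanges branches between vertices at \emph{different} heights (guided, e.g., by comparing the numbers of subtrees containing each of the two vertices along the path joining them); this is precisely the extra operation the original proof uses to pull high-degree vertices toward the root, and without it your concluding step ``a locally unimprovable tree satisfies (1)--(3)'' is false.

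A secondary error: in the sibling rearrangement you assert that ``both $g_T(u)$ and $g_T(x)$ weakly increase.'' That cannot happen, because the product $g_T(u)\,g_T(x)$ is invariant under any reassignment of the combined multiset of branches, so if one factor strictly increases the other strictly decreases. The correct mechanism is that, with the product fixed, the lopsided (greedy) assignment maximizes the sum $g_T(u)+g_T(x)$, and every common ancestor's contribution enters through the factor $(1+g_T(u))(1+g_T(x))=1+g_T(u)+g_T(x)+g_T(u)g_T(x)$, which is increasing in that sum; hence $N$ increases. Note also that this clean invariance argument requires $u$ and $x$ to be siblings---for same-layer vertices with different parents the relevant products are no longer preserved and a separate estimate is needed---so even the within-layer step needs more care than your sketch provides.
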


  \begin{theorem}\cite{zhangxiu2013}\label{theorem2-2}
  Given two different degree sequences $\pi$ and $\tau$. If  $\pi \triangleleft
  \tau$, then the number of subtrees of $T_{\pi}^*$ is less than
  the number of subtrees of $T_{\tau}^*$.
  \end{theorem}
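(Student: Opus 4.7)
The plan is to combine Proposition~\ref{prop} with a local branch-shifting argument and then appeal to Theorem~\ref{theorem2-1}. By Proposition~\ref{prop}, if $\pi\triangleleft\tau$ there is a chain of tree degree sequences $\pi=\pi_0\triangleleft\pi_1\triangleleft\cdots\triangleleft\pi_k=\tau$ in which each consecutive pair differs in exactly two components. By transitivity of strict inequality it therefore suffices to handle the elementary case where $\tau$ arises from $\pi$ by selecting two components $a\ge b\ge 2$ and replacing them by $a+1,\ b-1$ (the hypothesis $b\ge 2$ ensures $\tau$ is still a tree degree sequence).

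In this elementary case, I would work inside the greedy tree $T_{\pi}^*$ and locate two vertices $u,v$ with $d(u)=a$ and $d(v)=b$. The BFS-ordering guaranteed by Proposition~\ref{BFRuni} lets us pick $u$ no later than $v$ in the ordering, so in particular $h(u)\le h(v)$. I would then construct an auxiliary tree $T^{\sharp}\in\mathcal{T}_{\tau}$ by a branch-shift: choose a pendant subtree $S$ hanging off $v$ at an appropriate child (one that does not contain $u$), detach its root edge from $v$, and re-attach that root to $u$. The result is a tree whose degrees agree with $T_{\pi}^*$ except that $d(v)$ drops by one and $d(u)$ rises by one, so $T^{\sharp}\in\mathcal{T}_{\tau}$.

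The heart of the argument is the strict inequality $N(T^{\sharp})>N(T_{\pi}^*)$, where $N(\cdot)$ counts subtrees. For this I would use the rooted-subtree recursion
\[
N(T;x)=\prod_{c \text{ child of } x}\bigl(1+N(T_c;c)\bigr),
\]
together with the global identity $N(T)=\sum_{x\in V(T)} N(T;x)/(\text{normalisation})$ implicit in the branch-wise counting. Only the factors associated with $u,v$, and their common ancestors along the unique $u$--$v$ path in $T_{\pi}^*$ are altered by the swap. The BFS-structure of $T_{\pi}^*$ forces $u$'s surviving child-branches to dominate those of $v$ in the multiplicative sense needed, so transferring the additional factor $\bigl(1+N(S;\mathrm{root}(S))\bigr)$ from the product at $v$ to the product at $u$ strictly increases the count, much in the spirit of the inequality $(1+X)(A+Y)<(1+Y)(A+X)$ when $X>Y$ and $A>1$.

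Once the strict increase $N(T_{\pi}^*)<N(T^{\sharp})$ is established, Theorem~\ref{theorem2-1} yields $N(T^{\sharp})\le N(T_{\tau}^*)$ since $T_{\tau}^*$ is the unique maximiser in $\mathcal{T}_{\tau}$, and the chain $N(T_{\pi}^*)<N(T^{\sharp})\le N(T_{\tau}^*)$ completes the elementary case. The main obstacle is precisely the verification of strictness under the branch-shift: although the operation is local at $u$ and $v$, the change in $N$ propagates along the path joining them, and one must exploit the BFS-hypothesis to guarantee that at every intermediate vertex the multiplicative contribution genuinely grows rather than merely staying constant. A careful choice of which pendant subtree $S$ to relocate, informed by the BFS-ordering, is what makes this propagation favourable.
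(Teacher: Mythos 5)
The survey states this theorem only as a citation to \cite{zhangxiu2013} and contains no proof of it, so there is no in-paper argument to compare against; I can only judge your plan on its own terms. Your architecture --- reduce via Proposition~\ref{prop} to a unit transfer between two components, realize that transfer inside $T_{\pi}^*$ by detaching a pendant branch from a lower-degree vertex $v$ and reattaching it at a higher-degree vertex $u$, show the subtree count strictly increases, then finish with the extremality and uniqueness in Theorem~\ref{theorem2-1} --- is exactly the template used for every other majorization theorem in this survey and is the right one here.

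The gap is the step you yourself call the heart of the argument. Classifying subtrees by whether they meet the moved branch $S$ (rooted at a child $w$ of $v$ off the $u$--$v$ path) gives the exact identity $N(T^{\sharp})-N(T_{\pi}^*)=N_S(w)\bigl(N_{T-S}(u)-N_{T-S}(v)\bigr)$, where $N_{T-S}(x)$ is the number of subtrees of $T_{\pi}^*-S$ containing $x$; so the entire elementary case reduces to the single inequality $N_{T-S}(u)>N_{T-S}(v)$. You assert this via ``$u$'s surviving child-branches dominate those of $v$ in the multiplicative sense'' and an analogy with $(1+X)(A+Y)<(1+Y)(A+X)$, but you never prove it, and it is not a formal triviality: it needs a comparison lemma (proved by induction along the $u$--$v$ path or branch by branch using the BFS order) saying that in a greedy tree the number of subtrees through a vertex decreases along the BFS ordering, and one must check that this survives the deletion of $S$ and stays strict even when $d(u)=d(v)$ and $u,v$ occupy near-symmetric positions (if strictness fails there, you must instead argue $T^{\sharp}\neq T_{\tau}^*$ to invoke the uniqueness in Theorem~\ref{theorem2-1}). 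Separately, the identity $N(T)=\sum_x N(T;x)/(\text{normalisation})$ is not correct as written; the usable decomposition counts each subtree once at its vertex nearest the root, $N(T)=\sum_x\prod_{c}\bigl(1+N(T_c;c)\bigr)$, with no normalisation, and the cleaner route is the $S$-versus-complement classification above rather than propagation of products along the path. Until the comparison lemma is actually established, what you have is a correct plan rather than a proof.
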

  \section{Unicyclic degree sequences}
   For a given nonincreasing degree sequence $\pi=(d_0,
d_1,\cdots, d_{n-1}) $  of a unicyclic graph with $n\ge 3$, we
construct a special unicyclic graph $U_{\pi}^*$ as follows: If
$d_0=2$, denote by  $U_{\pi}^*$ the cycle of order $n$. If $d_0\ge
3$, denote by $U_{\pi}^*$ is the graph obtained from a
cycle $C_3$ of order $3$ and by breadth-first search ordering, i.e.,
 $U_{\pi}^*$ is BFS graph with the first three vertices, each pair of which is adjacent.
  Clearly, there exists only one such unicyclic graph $U_{\pi}^*$ for a given unicyclic degree sequence $\pi$, which is called {\it the greedy unicyclic graph} with a given unicyclic degree sequence $\pi$.

\begin{proposition}(\cite{hakimi1962}, \cite{senior1951})\label{graphicalunicyclic}
Let  $\pi=(d_0,\cdots, d_{n-1})$ be a positive nonincreasing integer
sequence with $n\ge 3$. Then $\pi$ is unicyclic graphic
if and only if $\sum_{i=0}^{n-1}d_i=2n$ and $d_2\ge 2$.
\end{proposition}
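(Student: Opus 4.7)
The plan is to prove the two directions separately. Necessity is immediate from basic facts about unicyclic graphs, and sufficiency will be established by induction on $n$.

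For the necessity direction, suppose $G$ is a unicyclic graph realizing $\pi$. Then $G$ is connected with exactly $n$ edges (one more than a spanning tree), so $\sum_{i=0}^{n-1} d_i = 2|E(G)| = 2n$. Moreover, the unique cycle in $G$ has length at least $3$, so at least three vertices of $G$ have degree $\ge 2$. Since $\pi$ is arranged in nonincreasing order, the three largest entries $d_0, d_1, d_2$ must all be at least $2$.

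For the sufficiency direction, assume $\sum d_i = 2n$ and $d_2 \ge 2$, and proceed by induction on $n$. For the base case $n=3$, the conditions $d_0 \ge d_1 \ge d_2 \ge 2$ and $d_0+d_1+d_2 = 6$ force $d_0=d_1=d_2=2$, which is realized by the triangle $C_3$. For the induction step, I would first observe a dichotomy: either $d_0 = 2$, in which case the nonincreasing property together with $\sum d_i = 2n$ forces every $d_i = 2$ (realized by $C_n$), or $d_0 \ge 3$ and then $d_{n-1} = 1$ (since if every entry were $\ge 2$, the sum would exceed $2n$).

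In the remaining case $d_0 \ge 3$, $d_{n-1} = 1$, form the reduced sequence $\pi'$ by deleting the trailing $1$ and replacing $d_0$ by $d_0 - 1$, then re-sorting in nonincreasing order. Then $\pi'$ has $n-1 \ge 3$ positive integer entries and sums to $2n - 2 = 2(n-1)$. The crux is to verify that its third-largest entry $d_2'$ is still $\ge 2$. Since $d_0 - 1 \ge 2$ and $d_1, d_2 \ge 2$ are unchanged, the multiset of entries of $\pi'$ contains at least three values that are $\ge 2$ (namely $d_0-1, d_1, d_2$), so $d_2' \ge 2$. By the induction hypothesis, $\pi'$ has a unicyclic realization $G'$ on $n-1$ vertices. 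Pick any vertex $v \in V(G')$ with $\deg_{G'}(v) = d_0 - 1$ and attach a new pendant vertex $w$ to $v$; the resulting graph $G$ has $n$ vertices, is connected (pendant attachment preserves connectedness), contains exactly the cycle of $G'$, and has degree sequence $\pi$.

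The step I expect to require the most care is showing $d_2' \ge 2$ after re-sorting, since the value $d_0 - 1$ may land anywhere in the reordered sequence and one must be sure the top three positions are still occupied by entries $\ge 2$; the case split above (separating $d_0 = 2$ from $d_0 \ge 3$) is what makes this clean. Everything else — the edge count, the preservation of connectedness under adding a pendant, and the fact that the unique cycle is inherited from $G'$ — is routine.
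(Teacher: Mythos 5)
Your proof is correct: both directions are sound, the induction is properly grounded at $n=3$, the dichotomy $d_0=2$ versus $d_0\ge 3$ handles the reduction cleanly, and the key check that the reduced sequence still has third-largest entry at least $2$ is exactly the point that needed care and is argued correctly. The paper itself states this proposition only as a cited result of Senior and Hakimi and gives no proof, so there is nothing to compare against; your elementary pendant-deletion induction is a perfectly standard and valid way to establish it.
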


\begin{theorem}(\cite{liumuhuo2012}, \cite{zhang2009})\label{unicyclicmain}
Let $\pi=(d_0,d_1,\cdots, d_{n-1})$ be a unicyclic degree sequence. Then the greedy unicyclic graph $U_{\pi}^*$ is
the only  unicyclic graph having the largest (signless Laplacian)
spectral radius in ${\mathcal U}_{ \pi}$, i.e.,
\begin{equation}\rho(G)\le \rho(U_{\pi}^*), \ \ \ q(G)\le q(U_{\pi}^*)\end{equation}
with one equality if and only if $G$ is the greedy unicyclic graph $U_{\pi}^*$
\end{theorem}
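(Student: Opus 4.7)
The plan is to extend the tree-case argument of Theorem \ref{adjeigupp}, with the unique cycle treated as an obstruction that must be placed at the ``top'' of the BFS structure. I write the argument for $\rho(G)$; the signless Laplacian version is identical after replacing $A(G)$ by $Q(G)=D(G)+A(G)$, because for graphs in $\mathcal{U}_\pi$ the diagonal $D(G)$ has a fixed multiset of entries and so contributes nothing to the Rayleigh-quotient difference under any degree-preserving edge swap.

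Let $G\in\mathcal{U}_\pi$ attain the maximum spectral radius and let $f$ be its Perron vector. The main tool is the standard \emph{swap identity}: if $uv,xy\in E(G)$, $uy,xv\notin E(G)$, and $G'=G-uv-xy+uy+xv$, then
\begin{equation*}
\rho(G')\;\geq\;f^{T}A(G')f\;=\;\rho(G)\;+\;\tfrac{2}{\langle f,f\rangle}\bigl(f(u)-f(x)\bigr)\bigl(f(y)-f(v)\bigr).
\end{equation*}
Hence $f(u)>f(x)$ and $f(y)>f(v)$ would force $\rho(G')>\rho(G)$, contradicting extremality; this rules out all such swaps in $G$. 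First I would use this to establish Perron--degree monotonicity in $G$: namely, $d(u)>d(v)$ implies $f(u)\geq f(v)$, proved by choosing a neighbor $w$ of $u$ that is not a neighbor of $v$ (available since $d(u)>d(v)$) and applying the swap identity to the edges $uw$ and some low-Perron edge at $v$. Next I would root $G$ at a vertex $v_{0}$ of degree $d_{0}$, define heights, and verify conditions (1)--(3) of Definition \ref{BFS} by analogous local swap arguments: each violation provides a concrete pair of edges whose swap strictly raises $\rho$.

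The hard step is handling the unique cycle $C$ of $G$. If $d_{0}=2$ then $G=C_{n}=U_\pi^{\ast}$ and we are done, so assume $d_{0}\geq 3$. I claim $C$ must be a triangle whose vertices are the top three in the Perron order (equivalently, of the top three degrees), established in two substeps. (a) If $|C|=\ell\geq 4$, pick a chord-free pair of cycle vertices $x,y$ and an edge $uv$ attached to $C$ with small $f$-values; the swap deletes one cycle edge and one pendant-side edge, adding two edges that together yield a strictly shorter cycle while preserving unicyclicity, connectivity, and the degree sequence; Perron monotonicity ensures the swap is spectrum-increasing. (b) Once $C=v_0'v_1'v_2'$ is a triangle, if some off-triangle vertex has larger Perron value than a triangle vertex, a swap between a triangle edge and a tree edge relocates the top-Perron vertex onto the triangle and strictly raises $\rho$.

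Once these reductions are complete, $G$ is BFS-ordered with the cycle sitting as a triangle on the three top-degree vertices; the uniqueness-of-BFS-realizations argument behind Proposition \ref{BFRuni}, extended from trees to unicyclic graphs by the same inductive layer-by-layer construction, then forces $G\cong U_\pi^{\ast}$, giving both the extremal identification and its uniqueness. The main obstacle is substep (a) above: one must locate a degree-preserving swap that shortens the cycle without disconnecting the graph or creating a second cycle. The idea is to use the tree branches hanging off of $C$, which exist because some cycle vertex has degree $\geq 3$ (guaranteed once $d_0\geq 3$ and $\ell\geq 4$), to ``absorb'' the redirection; verifying that the resulting swap meets the Perron-vector inequality requires combining the degree-monotonicity step with the height ordering already proved for tree portions, and this is where the bookkeeping becomes most delicate.
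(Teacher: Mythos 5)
Your overall strategy---degree-preserving $2$-switches evaluated against the Perron vector, first forcing a BFS-ordering and then pinning the unique cycle down as a triangle on the top vertices---is the route taken in the cited sources \cite{zhang2009} and \cite{liumuhuo2012} (the survey itself reproduces no proof), and your observations that $D(G')=D(G)$ under a degree-preserving switch (so the $Q$-computation is the same) and that $d_0=2$ is trivial are fine. The genuine gap is your substep (a). A switch that deletes one cycle edge and one tree edge does \emph{not} in general yield a shorter cycle: if the tree edge $uv$ lies deep in a branch, the unique cycle of $G'$ is re-routed through that branch and is typically \emph{longer}; and a switch of two cycle edges against each other either preserves the cycle length or destroys unicyclicity. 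A shortening switch does exist---for instance, with cycle $w_1w_2\cdots w_g$, delete $w_1w_2$ and a pendant-side edge $w_jv$ at a cycle vertex $w_j$ not adjacent to $w_1$, and add $w_1w_j$ and $w_2v$---but its Rayleigh increment is $2\bigl(f(w_1)-f(v)\bigr)\bigl(f(w_j)-f(w_2)\bigr)$, and nothing in your sketch guarantees that some admissible choice of $w_1w_2$, $w_j$, $v$ makes the two factors agree in sign; the alternative completion $w_1v$, $w_2w_j$ has a different increment and is structurally illegal for small girth (e.g.\ $g=4$, where $w_2\sim w_j$). ``Pick an edge attached to $C$ with small $f$-values'' does not resolve this; the actual proofs run a case analysis over these sign patterns, and that analysis is the heart of the theorem.

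Two further problems. In substep (b), a single degree-preserving $2$-switch cannot replace a triangle vertex $c$ by an outside vertex $z$ (that would change $d(c)$ and $d(z)$ by two), so ``relocates the top-Perron vertex onto the triangle'' is not a legal move as described; in the literature one instead proves the BFS-ordering first, observes that in a BFS-ordered unicyclic graph the single non-tree edge forces the triangle to consist of a vertex $a$ in some layer together with two of its children, and then shows that layer is layer $0$. Finally, the uniqueness claim (``the \emph{only} unicyclic graph'') is never addressed: whenever the increment vanishes you only obtain $\rho(G')\ge\rho(G)$, and excluding a non-greedy extremal graph requires the standard equality analysis (if $\rho(G')=\rho(G)$ then $f$ is also a Perron vector of $G'$, and comparing the eigenvalue equations at the switched vertices gives a contradiction). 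Note also that Proposition~\ref{BFRuni} asserts uniqueness of BFS-realizations only for trees, and the survey explicitly warns that a graphic degree sequence may admit many BFS graphs; so without the triangle-location and equality arguments your proof establishes at best the weaker statement, already recorded in Section~6 of the survey, that \emph{some} BFS-ordered unicyclic graph is extremal.
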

\begin{theorem}(\cite{liumuhuo2012}, \cite{zhang2009})\label{uni-major}
Let $\pi$ and $\tau$ be two different degree sequences of
unicyclic graphs with the same order.
 If
$\pi\triangleleft \tau$  then $\rho(U_{\pi}^*)<
\rho(U_{\tau}^*)$ and $q(U_{\pi}^*)<
q(U_{\tau}^*)$
\end{theorem}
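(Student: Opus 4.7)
The plan is to mimic the majorization proofs for trees (Theorems 3.2 and 3.6) adapted to the unicyclic setting, combining the one-step reduction supplied by Proposition 2.5 with the extremal characterization of $U_{\pi}^{*}$ in Theorem 5.2 via a Kelmans-type edge-shift and a Rayleigh quotient comparison. First I would reduce to the single-step case: Proposition 2.5 provides a chain of graphic degree sequences $\pi=\pi_{0}\triangleleft\pi_{1}\triangleleft\cdots\triangleleft\pi_{k}=\tau$ in which consecutive sequences differ in exactly two components by $\pm 1$. Because each $\pi_{i}$ preserves the even sum $2n$ of $\pi$ and $\tau$, and one verifies (via Proposition 5.1) that each intermediate sequence still satisfies $d_{2}\ge 2$, every $\pi_{i}$ is unicyclic graphic. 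So it is enough to establish strict inequality under the assumption that $\tau$ is obtained from $\pi$ by $\tau_{p}=\pi_{p}+1$ and $\tau_{q}=\pi_{q}-1$ for some $p<q$, all other components unchanged.

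Next I would carry out the edge shift. Let $G=U_{\pi}^{*}$ with its BFS-ordering $v_{0}\prec v_{1}\prec\cdots\prec v_{n-1}$; by Theorem 5.2 the Perron vector $f$ (respectively the signless-Laplacian Perron vector $g$) is consistent with this ordering, so $f(v_{p})\ge f(v_{q})$ and $g(v_{p})\ge g(v_{q})$. Since $v_{p}$ does not dominate $v_{q}$ in $G$, I can select a neighbor $u$ of $v_{q}$ with $uv_{p}\notin E(G)$ and $u\neq v_{p}$, chosen so that $G':=G-uv_{q}+uv_{p}$ is still a connected unicyclic graph; $G'$ then realizes $\tau$. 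For the adjacency Perron vector,
\begin{equation*}
\rho(G')\;\ge\;\frac{f^{T}A(G')f}{f^{T}f}\;=\;\rho(G)+\frac{2f(u)\bigl(f(v_{p})-f(v_{q})\bigr)}{f^{T}f}\;\ge\;\rho(G),
\end{equation*}
and the inequality is strict because otherwise $f$ would be a Perron eigenvector of $G'$ satisfying the eigenvalue equation at $v_{p}$ and $v_{q}$ with the \emph{new} degrees, contradicting $\pi\neq\tau$. The analogous computation for $Q=D+A$ gives
\begin{equation*}
q(G')\;\ge\; g^{T}Q(G')g\;=\; q(G)+\bigl(g(v_{p})^{2}-g(v_{q})^{2}\bigr)+2g(u)\bigl(g(v_{p})-g(v_{q})\bigr)\;>\;q(G),
\end{equation*}
where the extra diagonal term $g(v_{p})^{2}-g(v_{q})^{2}$ arises because the degree increase happens at $v_{p}$ and the decrease at $v_{q}$. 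Applying Theorem 5.2 to the sequence $\tau$ then yields $\rho(U_{\tau}^{*})\ge\rho(G')>\rho(U_{\pi}^{*})$ and $q(U_{\tau}^{*})\ge q(G')>q(U_{\pi}^{*})$, which closes the single-step case and, by induction along the chain, the theorem.

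The main obstacle is the construction of $u$ in the edge shift: one must guarantee that $G'$ remains not merely simple but \emph{connected} and \emph{unicyclic}, that is, that the swap neither destroys the unique cycle nor creates a second one nor disconnects the graph. This requires a short case analysis according to whether $v_{p}$, $v_{q}$, or $u$ lies on the unique cycle of $G$, and it is here that the rigid structure of the greedy graph $U_{\pi}^{*}$ is essential: by the construction, the three vertices of the cycle sit at the very beginning of the BFS-ordering, and for any vertex $v_{q}$ that is not a pendant there exists a neighbor $u$ in a strictly deeper BFS-layer whose removal does not detach any component of $G-uv_{q}$ from $v_{p}$. Handling the corner cases where $v_{p}$ or $v_{q}$ itself lies on the cycle, and where $\pi_{q}=1$ (so that no such $u$ exists at $v_{q}$ and one must instead shift an edge incident to a sibling), is the technical heart of the proof.
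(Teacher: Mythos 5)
Your proposal follows essentially the same route as the proofs in the cited sources (\cite{zhang2009}, \cite{liumuhuo2012}) and as the tree analogues surveyed here: reduce to a single unit transformation via Proposition~\ref{prop} (checking the intermediates stay unicyclic graphic), perform an edge switch $G-uv_q+uv_p$ guided by the BFS-consistent Perron (resp.\ signless Laplacian Perron) vector of the greedy unicyclic graph, and conclude by a Rayleigh-quotient comparison with the eigenvalue-equation argument for strictness. The survey states the theorem without proof, and the corner cases you flag (existence of a valid $u$ when $v_q$ lies on the triangle or has degree 2, and the bookkeeping among vertices of equal degree) are precisely the routine technicalities disposed of in those references, so there is no substantive gap.
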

In order to present the results on the Dirichlet eigenvlaues, we need the following notion.
 Let $\pi=(d_0, \cdots, d_{k-1}, 1, \cdots, 1)$ be a unicyclic degree sequence  with $d_0\ge d_1\ge \dots\ge d_{k-1}$. If $d_2\ge 3$ , then let $U_{\pi, B}^*$ have SLO*-ordering  with each pair of the first three vertices  being adjacent; if $d_0=\ldots=d_{m-1}=2$ and $d_{m}=3$ for $3\le m\le k-1$, then let $U_{\pi, B}^*$ be obtained from by identifying one end vertex of a path $P_m$ with one vertex of triangle and the other end vertex of $P_m$ with the root of $T_{\tau, B}^*$ having SLO*-ordering with $\tau=(d_{m+1}-1, d_{m+2}, \cdots, d_{n-1})$; if $d_0=\ldots=d_{m-1}=2$ and $d_{m}\ge 4$ for $3\le m\le k-1$, then  let $U_{\pi, B}^*$ be obtained from by identifying one vertex of a cycle $C_m$ and the root of  $T_{\tau, B}^*$ having SLO*-ordering with $\tau=(d_{m}-2, d_{m+2}, \cdots, d_{n-1})$.    Clearly, $U_{\pi, B}^*$ is uniquely determined by $\pi$.  Then we  have the following results
\begin{theorem}\cite{zhanggj2012}\label{theorem1.1}  For a given graphic unicyclic degree sequence
$\pi=(d_0, d_1, \ldots,d_{n-1}), $ with $3\le d_0\le\ldots\le d_k $
and $d_{k+1}=\cdots=d_{n-1}=1$,
  let $G=(V_0\cup \partial V, E_0\cup \partial E)$ be a  graph with
  the Faber-Krahn property in $\mathcal{U}_{\pi}.$ Then $G$ has an
 SLO*-ordering  consistent with the first eigenfunction $f$ of
 $G$ in such a way that $ v \prec u$ implies $f(v) \geq f(u)$.
\end{theorem}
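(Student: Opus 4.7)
The plan is to mimic the rearrangement strategy used by B\i yiko\u{g}lu--Leydold for trees, adapted to the unicyclic setting, using the Rayleigh quotient characterization
\[
\lambda^D(G) \;=\; \min_{f\neq 0,\ f|_{\partial V}=0}\; \frac{\sum_{uv\in E(G)} (f(u)-f(v))^2}{\sum_{v\in V(G)} f(v)^2}.
\]
Let $f$ be the first Dirichlet eigenfunction of $G$, normalized so that $f>0$ on $V_0$ and $f=0$ on $\partial V$; uniqueness and positivity of $f$ on the interior follow from the Perron--Frobenius theorem applied to the Dirichlet Laplacian of the connected graph $G$. Pick the root $v_0$ to be an interior vertex of maximum $f$-value, and define a well-ordering $\prec$ on $V_0\cup\partial V$ by declaring $v\prec u$ whenever $f(v)>f(u)$; ties with $f(v)=f(u)$ are broken first by height $h(\cdot)$ (smaller first) and then arbitrarily. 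By construction the ordering is consistent with $f$, i.e.\ $v\prec u$ implies $f(v)\ge f(u)$. The work is to show that this $\prec$ satisfies conditions (1)--(4) of Definition~\ref{definition3.1}.

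Condition (3) is immediate, since $f=0$ precisely on $\partial V$, so every interior vertex precedes every boundary vertex. For conditions (1), (2) and (4) the strategy is a switching argument: I will assume a violation and construct $G'\in\mathcal{U}_\pi$ with strictly smaller Rayleigh quotient on the modified eigenfunction, contradicting the Faber--Krahn extremality of $G$. The basic move is the standard edge swap: if $uv,xy\in E(G)$, $uy,xv\notin E(G)$, then replacing $\{uv,xy\}$ by $\{uy,xv\}$ preserves the degree sequence, and the induced change in the Dirichlet energy of the unchanged function $f$ is
\[
\Delta = \bigl[(f(u)-f(y))^2+(f(x)-f(v))^2\bigr]-\bigl[(f(u)-f(v))^2+(f(x)-f(y))^2\bigr]
= 2\bigl(f(u)-f(x)\bigr)\bigl(f(v)-f(y)\bigr).
\]
Hence whenever $f(u)>f(x)$ and $f(v)<f(y)$ the energy strictly decreases (and the denominator is unchanged), which forces $\lambda^D(G')<\lambda^D(G)$. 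Conditions (1), (2) and (4) are each arranged so that a violation supplies exactly such a pair $(u,v,x,y)$; for instance a violation of (1) with $v\prec u$ and $h(v)>h(u)$ yields neighbors at smaller height with a strictly decreasing value, producing a qualifying switch.

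The main obstacle is ensuring each swap produces a graph in $\mathcal{U}_\pi$, i.e.\ keeps the graph connected and exactly unicyclic. In the tree case of B\i yiko\u{g}lu--Leydold both are automatic, but here a swap can either destroy the unique cycle or create a second one. I will handle this in cases according to whether the four vertices $u,v,x,y$ lie in the cycle or in one of the pendant trees hanging off the cycle: for swaps entirely inside a pendant tree the standard argument carries through verbatim; for swaps incident to cycle vertices I will either choose a parallel swap among alternative neighbors of equal $f$-value (available because the eigenfunction analysis on the cycle is symmetric), or combine the swap with a second compensating swap along the cycle to keep one unique cycle. Finally, condition (2) (the lexicographic BFS-like condition) and condition (4) (the degree condition) are both obtained by exactly the same Rayleigh-quotient switching, now applied to pairs at the same height or with incompatible degrees; the unicyclic bookkeeping is easier here because these swaps occur among siblings in the pendant forest. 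Putting all cases together shows $\prec$ must satisfy (1)--(4), so $G$ possesses the required SLO*-ordering consistent with $f$.
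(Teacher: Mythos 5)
This survey states the theorem but does not reproduce a proof (it is cited from \cite{zhanggj2012}), so I can only judge your outline against the strategy that result actually requires. Your skeleton is the right one --- Rayleigh quotient, Perron--Frobenius positivity of $f$ on $V_0$, ordering by $f$-values, and degree-preserving two-edge switches with the correct energy identity $\Delta=2(f(u)-f(x))(f(v)-f(y))$ --- and conditions like (3) do come for free. But there are two genuine gaps. The first and most serious is exactly the point you flag and then wave away: switches touching the cycle. For a connected graph with $n$ vertices and $n$ edges, unicyclicity after a swap is equivalent to connectivity, and a swap involving a cycle edge can disconnect $G'$; your two proposed remedies are assertions, not arguments. ``The eigenfunction analysis on the cycle is symmetric'' is unsupported (nothing forces the first Dirichlet eigenfunction of an arbitrary competitor, or even of the extremal graph before its structure is known, to be symmetric on the cycle), and ``combine the swap with a second compensating swap'' gives no reason the total energy change stays negative --- the second swap contributes its own term $2(f(u')-f(x'))(f(v')-f(y'))$ of uncontrolled sign. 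This is precisely where the unicyclic case differs from B{\i}y{\i}ko\u{g}lu--Leydold's tree argument, and it is why the theorem carries the hypothesis $d_0\ge 3$ (no interior vertices of degree $2$): the case $d_0=2$, where the cycle can be long, is still open (Conjecture~\ref{conjecture5.3} in this survey). A proof that never uses $d_0\ge 3$ in the cycle analysis cannot be complete.

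The second gap is condition (4) of Definition~\ref{definition3.1}. A violation $v\prec u$, $d(v)>d(u)$ cannot be repaired by the edge swap $\{uv',xy'\}\to\{uy',xv'\}$, which preserves every vertex degree and hence cannot change which of $u,v$ has the larger degree. One needs a different move --- transferring $d(v)-d(u)$ pendant branches from $v$ to $u$ so that the two degrees are exchanged and the multiset $\pi$ is preserved --- and its energy change is $\sum_w\bigl[(f(u)-f(w))^2-(f(v)-f(w))^2\bigr]=\sum_w(f(u)-f(v))(f(u)+f(v)-2f(w))$, which is not automatically negative; making it so requires first knowing which branches carry small $f$-values, i.e.\ it must be interleaved with the proofs of (1) and (2) rather than cited as ``exactly the same switching.'' Finally, even for condition (1) you should verify the non-adjacency hypotheses $uy,xv\notin E$ before invoking the swap and treat the degenerate cases where the four vertices are not distinct; in a unicyclic graph these degeneracies occur exactly around the cycle, feeding back into the first gap.
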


\begin{theorem}\cite{zhanggj2012}
\label{theorem1.2} For a given graphic unicyclic degree sequence
$\pi=(d_0, d_1, \ldots,d_{n-1})$
 with $3\le d_0\le\ldots\le d_k $ and $d_{k+1}=\ldots=d_{n-1}=1$, then
 $\lambda^D(G)\ge \lambda^D(U_{\pi, B}^*)$
with equality if and only if $G=U_{\pi, B}^*$. In other words,
 $U_{\pi, B}^{\ast}$  is the unique uncyclic  graph with the Faber-Krahn
property in $\mathcal{U}_{\pi}$, which can be regarded as  ball
approximation.
\end{theorem}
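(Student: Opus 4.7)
The plan is to combine Theorem~\ref{theorem1.1} with a structural uniqueness statement for SLO*-ordered unicyclic graphs. Let $G \in \mathcal{U}_\pi$ attain the minimum first Dirichlet eigenvalue. By Theorem~\ref{theorem1.1}, $G$ admits an SLO*-ordering $\prec$ consistent with its first Dirichlet eigenfunction, in the sense that $v \prec u$ implies $f(v) \ge f(u)$. The entire argument then reduces to showing that, up to isomorphism, $U_{\pi,B}^*$ is the unique member of $\mathcal{U}_\pi$ admitting any SLO*-ordering.

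For the uniqueness I would reconstruct $G$ layer by layer from the ordering, mirroring the construction of $U_{\pi,B}^*$. Condition~(1) of Definition~\ref{definition3.1} partitions $V(G)$ into layers by height from the root; condition~(3) forces all boundary leaves into a final block of the ordering; condition~(4) orders the interior vertices by non-decreasing degree within each layer; and condition~(2) then fixes the bipartite adjacency between consecutive layers to be a unique greedy matching. Together with the prescribed degree sequence $\pi$, these constraints determine the interior spanning-tree skeleton of $G$ uniquely up to isomorphism, exactly as in the tree case (the extension of Proposition~\ref{BFRuni} to trees with boundary in \cite{biykoglu2007}).

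It remains to locate the single extra edge that turns the spanning tree into a unicyclic graph. This is where the case split preceding the theorem enters: according to whether $d_2 \ge 3$, or $d_0 = \cdots = d_{m-1} = 2$ with $d_m = 3$, or $d_m \ge 4$ for some $m$, the SLO* conditions admit only one compatible position for the extra edge, producing respectively a triangle on the first three interior vertices, a triangle attached to the end of a path $P_m$, or a cycle $C_m$ with a BFS-tree rooted at one of its vertices. In each case the remaining subtree is forced to equal $T_{\tau,B}^*$ for the reduced degree sequence $\tau$ obtained by subtracting the cycle contribution, completing the identification $G \cong U_{\pi,B}^*$.

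The main obstacle will be the case analysis for the cycle location. One must check that any alternative placement of the cycle-forming edge either violates condition~(2) by creating a quadruple $u \prec x$, $uv, xy \in E$, $uy, xv \notin E$ with $v \not\prec y$, or violates condition~(4) by producing an interior vertex of the wrong degree at the wrong position in the ordering. The dichotomy at the first branching vertex of the spiral (degree $3$ forces a triangle dangling from the path, degree $\ge 4$ forces the full cycle $C_m$) is the most delicate step; once it is settled, the reduction to $T_{\tau,B}^*$ on the attached subtree is a routine but essential finish that yields both the inequality and the equality case.
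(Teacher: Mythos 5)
Your overall architecture (Theorem~\ref{theorem1.1} to get an SLO*-ordering of a minimizer, then a combinatorial uniqueness statement to identify the minimizer with $U_{\pi,B}^*$) is the natural one, but the reduction you rely on is false as stated: it is \emph{not} true that $U_{\pi,B}^*$ is the unique member of $\mathcal{U}_\pi$ admitting an SLO*-ordering. The four conditions of Definition~\ref{definition3.1} do determine the layer structure and the between-layer adjacencies of the underlying spanning tree, but they say nothing about the edge that closes the unique cycle: condition~(2) only applies to pairs of edges with $h(u)=h(v)-1$ and $h(x)=h(y)-1$, so a cycle-closing edge joining two vertices of the \emph{same} height escapes it entirely, and condition~(4) only compares degrees of interior vertices, which it cannot do if the alternative placement leaves the degree sequence unchanged. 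Concretely, for $\pi=(3,3,3,3,3,3,1,1,1,1,1,1)$ one can place the triangle on $\{v_0,v_1,v_2\}$ (this is $U_{\pi,B}^*$) or on $\{v_1,v_4,v_5\}$ with $v_4,v_5$ in the second layer; both graphs admit orderings satisfying (1)--(4), so your claimed uniqueness step fails and with it the whole identification. What is actually needed, and what the source \cite{zhanggj2012} supplies, is a further variational argument: using the consistency of the ordering with the first Dirichlet eigenfunction $f$ (the conclusion of Theorem~\ref{theorem1.1} that $v\prec u$ implies $f(v)\ge f(u)$) together with edge-switching/rearrangement estimates on the Rayleigh quotient, one shows that in a minimizer the cycle must be the triangle on the first three vertices; only after the cycle is pinned down does the tree-type uniqueness argument finish the job.

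A second, independent problem is your case analysis. The theorem you are proving assumes $3\le d_0$, so only the branch ``$d_2\ge 3$, triangle on the first three vertices'' of the construction of $U_{\pi,B}^*$ is in play. The branches with $d_0=\cdots=d_{m-1}=2$ that you propose to handle ``routinely'' belong to Conjecture~\ref{conjecture5.3}, which the paper explicitly leaves open; treating them as settled cases of the same argument is a sign that the difficulty there has not been engaged. Restrict the proof to the hypothesis $d_0\ge 3$ and supply the missing cycle-location lemma, and the skeleton of your argument becomes the correct one.
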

For $d_0=2$, we proposed the following conjecture.
\begin{conjecture}\cite{zhanggj2012}\label{conjecture5.3}
Let $\pi=(d_0, d_1, \ldots, d_{k-1}, 1, \ldots, 1)$ be a graphic
unicyclic
 degree sequence
 with $2 \leq d_0 \leq d_1\leq
\ldots \leq d_{k-1}$ and $d_k=\ldots=d_{n-1}=1$. Then
$\lambda^D(G)\ge \lambda^D(U_{\pi, B}^*)$
with equality if and only if $G=U_{\pi, B}^*$.
In other words, $U_{\pi, B}^*$ is the unique graph with the Faber-Krahn property in $\mathcal{U}_{\pi}$.

\end{conjecture}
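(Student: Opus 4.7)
The plan is to adapt the Rayleigh-quotient/edge-switching framework behind Theorems~\ref{theorem1.1} and \ref{theorem1.2} to cycles of arbitrary length. Assuming $G\in\mathcal{U}_\pi$ has the Faber-Krahn property with first eigenfunction $f$ (positive on $V_0$, zero on $\partial V$), I want to show $G\cong U_{\pi,B}^*$. I would argue by contradiction: if $G\not\cong U_{\pi,B}^*$, construct a unicyclic graph $G'\in\mathcal{U}_\pi$ with Dirichlet Rayleigh quotient $R_{G'}(f)<R_G(f)=\lambda^D(G)$, contradicting minimality.

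First I would establish that $G$ admits an SLO*-ordering consistent with $f$, mirroring Theorem~\ref{theorem1.1}. Violations of conditions (1)--(4) in Definition~\ref{definition3.1} permit edge-switches $\{uv,xy\}\leftrightarrow\{uy,xv\}$ which strictly decrease $\sum_{ab\in E}(f(a)-f(b))^2$ while preserving $\sum_{v\in V_0}f(v)^2$. The new wrinkle in the $d_0=2$ regime is that a naive switch can break unicyclicity or disconnect the graph; I would therefore restrict switches to tree edges outside the cycle, or to pairs of edges that preserve the unique cycle, supplementing with local rotations along degree-2 paths when necessary.

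The main obstacle is the \emph{placement and length} of the cycle. When $d_0\ge 3$, Theorem~\ref{theorem1.2} forces a triangle on the three highest SLO* vertices; when $d_0=\cdots=d_{m-1}=2$, there is a priori freedom in the cycle length (anywhere from $3$ to $m$ when $d_m=3$, or exactly $m$ when $d_m\ge 4$). Using the SLO*-ordering, the degree-2 vertices must occupy the deepest interior layers in an initial segment of the ``bottom'' of the ordering. I would then argue, by explicit test-function comparison, that if $d_m\ge 4$ the $m$ degree-2 vertices together with the degree-$d_m$ vertex form the unique cycle $C_m$ (any other configuration creates a pendant degree-2 path that can be folded back into the cycle to decrease $R(\cdot)$); whereas if $d_m=3$ the cycle is forced to collapse to a triangle incident to the high-degree part, and the remaining $m-1$ degree-2 vertices form a pendant path connecting this triangle to the root of $T_{\tau,B}^*$.

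Once the cycle is placed, uniqueness follows in the spirit of Proposition~\ref{BFRuni}: an SLO*-ordering consistent with $f$ forces the tree part to be $T_{\tau,B}^*$, identifying $G$ with $U_{\pi,B}^*$. The hard step will be the cycle-length analysis in the transitional case $d_m=3$, where one must compare Dirichlet eigenvalues of unicyclic graphs having cycles of different lengths but identical degree sequences -- a comparison absent from both the tree case (Theorem~\ref{kf-tree}) and the $d_0\ge 3$ case (Theorem~\ref{theorem1.2}). I expect this step to require a two-stage perturbation: first shrink the cycle by a single vertex at a time (transferring the surplus onto the pendant path), then verify monotonicity of $\lambda^D$ under such shrinking by Rayleigh-quotient estimates applied with the post-perturbation eigenfunction.
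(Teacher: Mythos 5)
There is no proof to compare against: the statement you are proving is stated in the paper explicitly as a \emph{conjecture} (the case $d_0=2$ left open after Theorem~\ref{theorem1.2}, which settles only $d_0\ge 3$). Your proposal correctly diagnoses why the $d_0\ge 3$ argument does not extend --- when an initial segment of the degree sequence consists of $2$'s, the SLO*-ordering no longer pins down the length or the location of the unique cycle --- but it does not close that gap; it only names it. The two steps you defer, namely (i) showing that a pendant degree-$2$ path can be ``folded back'' into the cycle while decreasing the Dirichlet Rayleigh quotient, and (ii) the monotonicity of $\lambda^D$ under shrinking the cycle one vertex at a time with the surplus transferred to the pendant path, are precisely the content of the conjecture in the transitional case $d_m=3$. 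Neither is a local two-edge switch of the kind used in Theorem~\ref{theorem1.1}: changing the cycle length while preserving the degree sequence is a global rearrangement, and the first Dirichlet eigenfunction of the perturbed graph is not obtained from $f$ by a rearrangement that the standard $\sum_{ab\in E}(f(a)-f(b))^2$ comparison controls. Indeed, the sign of the change in $\lambda^D$ when trading cycle length against pendant-path length is exactly the discrete Faber--Krahn question (cycle versus path with Dirichlet boundary), and asserting it ``by Rayleigh-quotient estimates applied with the post-perturbation eigenfunction'' is circular unless you exhibit an explicit test function on the target graph $U_{\pi,B}^*$ that beats $\lambda^D(G)$.

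A secondary issue: your claim that violations of conditions (1)--(4) of Definition~\ref{definition3.1} can always be repaired by switches restricted to tree edges ``or pairs of edges that preserve the unique cycle'' is not justified. When the offending pair of edges involves one cycle edge and one tree edge, the switch that decreases the Rayleigh quotient may necessarily change which edges lie on the cycle, and the supplementary ``local rotations along degree-$2$ paths'' you invoke are not defined, nor is it shown that they decrease the quotient. Until both of these steps are carried out, the argument is a plausible programme rather than a proof, which is consistent with the statement's status in the paper as an open conjecture.
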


\begin{theorem}\cite{zhanggjzhang2013}\label{theorem1.1}  For a given positive integer $k \leq n-3$,  let $U_{n,k}^{\ast}$ be unicyclic graph obtained from by identifying one end vertex of a path $P_{n-k-1}$ and one vertex of a triangle and the other end vertex of $P_{n-k-1}$ and the center of star $K_{1, k}$. Then
 for any unicyclic graph $G$ of order $n$ with $k$ pendant vertices, $\lambda^D(G)\ge \lambda^D(U_{n,k}^*)$ with equality if and only if $G=U_{n,k}^{\ast}$.
 In other words,
$U^{\ast}$ is the only
 unicyclic graph with the Faber-Krahn property in the set $\mathcal{U}_{k}$ of all unicyclic graphs of order $n$ with $k$ pendant vertices, which  can be regarded a ball.

\end{theorem}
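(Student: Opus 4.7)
The plan is to prove the theorem by showing that any unicyclic graph $G \in \mathcal{U}_k$ not isomorphic to $U_{n,k}^{\ast}$ can be modified to produce another graph in $\mathcal{U}_k$ with strictly smaller first Dirichlet eigenvalue $\lambda^D$. The fundamental tool is the Rayleigh quotient formulation
\[
\lambda^D(G) = \min_{f \not\equiv 0,\ f|_{\partial V}\equiv 0} \frac{\sum_{uv \in E(G)}(f(u)-f(v))^2}{\sum_{u \in V_0} f(u)^2},
\]
together with a Perron--Frobenius-type argument ensuring that the first Dirichlet eigenfunction $f$ is unique up to scaling and strictly positive on the interior vertices $V_0$. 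Since here the boundary $\partial V$ is precisely the set of $k$ pendants, I can freely move or relocate pendants without changing the boundary structure of the test functions, making pendant transformations especially natural to analyze.

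First, I would argue that all $k$ pendants must be attached to a single interior vertex. If pendants hang off two different interior vertices $u_1$ and $u_2$ with $f(u_1) \ge f(u_2)$, moving one pendant from $u_2$ to $u_1$ strictly decreases the Rayleigh quotient: in the numerator, the contribution $f(u_2)^2$ from the moved pendant-edge is replaced by the larger term involving $f(u_1)^2$, and a standard comparison using $f$ itself as a test function yields strict decrease of $\lambda^D$. This is the same kind of switching argument used in the proofs of Theorems~\ref{kf-tree} and \ref{theorem1.2} for trees and for fixed-degree-sequence unicyclic graphs.

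Next, I would use an SLO*-ordering-type argument (Definition \ref{definition3.1} together with the unicyclic SLO*-result that appears earlier in this subsection) to show that the non-pendant portion of $G$ outside the unique cycle must be a disjoint union of simple paths. Any branching in the interior can be ``straightened out'' by the BFS/SLO*-switching operation: swap two edges around a high-$f$ vertex to move a branch further out along the direction where $f$ decreases. Combined with pendant-concentration, this forces the structure of $G$ to be a cycle with exactly one long path appended, terminating in the single star $K_{1,k}$.

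Finally, I would reduce the cycle length to $3$ and place the pendant cluster at the far end. If the unique cycle $C$ has length $\ell \ge 4$, choose a cycle-vertex $v$ at which $f$ attains its minimum on $C$, delete one cycle-edge at $v$, and reattach that edge along the pendant path; a direct Rayleigh-quotient comparison shows this Kelmans-type operation strictly decreases $\lambda^D$, and iterating brings $\ell$ down to $3$. The monotonicity of $f$ along the pendant path, a consequence of the Dirichlet boundary condition and the discrete maximum principle for $L(G)$, then shows that the star must be placed at the endpoint of the path opposite the triangle. The main obstacle is the cycle-length reduction step, because it is genuinely non-local: the trick is to postpone it until after the interior has been straightened into paths, so that the cycle-shortening becomes a tractable local perturbation on a cycle-with-pendant-path configuration. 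Putting all four steps together yields $G \cong U_{n,k}^{\ast}$, with equality forced at every step, which establishes both extremality and uniqueness.
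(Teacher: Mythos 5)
This statement is quoted in the survey from \cite{zhanggjzhang2013} without any proof, so there is nothing in the paper itself to compare your argument against; I can only assess the proposal on its own terms, and it has a genuine gap at its core. Your central switching lemma (step 2) is stated with the inequality reversed. Writing the Rayleigh quotient with the first Dirichlet eigenfunction $f$ of $G$ as a test function on the modified graph, a pendant edge at an interior vertex $u$ contributes $(f(u)-0)^2=f(u)^2$ to the numerator; if you detach a pendant from $u_2$ and reattach it at $u_1$ with $f(u_1)\ge f(u_2)$, the numerator changes by $f(u_1)^2-f(u_2)^2\ge 0$, so the quotient \emph{increases} (you even say the term is ``replaced by the larger term'' and then conclude the quotient decreases). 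An increase in the Rayleigh quotient of one test function gives no upper bound on $\lambda^D$ of the new graph, so the claimed strict decrease does not follow. The correct operation moves pendants toward the vertex with the \emph{smaller} eigenfunction value, and this sign is not a cosmetic fix: it dictates where the boundary ends up concentrating (near the minimum of $f$ on the interior, consistent with $U_{n,k}^*$ having all pendants at the shallow end of the eigenfunction), and your steps 3 and 4 inherit the same orientation, so the whole architecture of the argument has to be turned around.

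Two further points would still need repair even after fixing the sign. First, your operations do not obviously stay inside $\mathcal{U}_k$: removing a pendant from a vertex of interior degree $2$, or deleting a cycle edge at a degree-$2$ cycle vertex, creates a \emph{new} pendant and changes $k$; the cycle-shortening step in particular cannot be a single delete-and-reattach of one edge and needs a genuine unfolding operation (replace $C_\ell$ by $C_3$ plus a path segment) together with its own Rayleigh-quotient comparison. Second, you lean on ``the same kind of switching argument used in the proofs of Theorems~\ref{kf-tree} and \ref{theorem1.2},'' but those SLO*-switchings preserve the degree sequence, whereas your moves do not; moreover the fixed-degree-sequence Faber--Krahn result for unicyclic graphs is only proved in the survey under the hypothesis $d_0\ge 3$ (no interior vertices of degree $2$), and the case with degree-$2$ interior vertices --- which is exactly the regime of $U_{n,k}^*$, a graph consisting almost entirely of degree-$2$ interior vertices --- is left open as Conjecture~\ref{conjecture5.3}. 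So those theorems cannot be invoked as a black box to straighten the interior here; a self-contained comparison argument is required.
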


\section{Graphic degree sequences}
In this section, we discuss some extremal properties of $\mathcal{G}_{\pi}$.

  \begin{theorem}(\cite{Biyikoglu2008},\cite{zhang2009})For a given graphic degree sequence $\pi$, if $G$  is a simple connected graph that has the largest (resp. signless Laplacian) spectral radius in $\mathcal{G}_{\pi}$, then $G$
has a BFS-ordering, i.e., $G$ is a BFS graph.
\end{theorem}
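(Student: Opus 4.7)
The plan is a Perron-eigenvector edge-switching argument, parallel to the proof of Theorem~\ref{adjeigupp}. Let $G \in \mathcal{G}_\pi$ achieve the maximum spectral radius $\rho(G)$, and let $f$ be its positive Perron vector for $A(G)$. The case of the signless Laplacian spectral radius $q(G)$ will be treated identically, with $f$ replaced by the positive Perron vector $g$ of $Q(G)$, since any degree-preserving $2$-switch leaves $D(G)$ unchanged, and so the change in $g^{\top} Q(G) g$ reduces to a bilinear form of the same shape as in the adjacency case.

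First I will establish the switching inequality. If $uv, xy \in E(G)$ and $uy, xv \notin E(G)$, set $G' = G - uv - xy + uy + xv$. Then $G'$ is simple with the same degree sequence as $G$, and expanding $f^{\top} A(H) f = 2\sum_{ab \in E(H)} f(a)f(b)$ gives
\[
f^{\top} A(G') f - f^{\top} A(G) f \;=\; 2\bigl(f(u) - f(x)\bigr)\bigl(f(y) - f(v)\bigr).
\]
Hence whenever this quantity is positive and $G'$ is connected, the Rayleigh principle yields $\rho(G') > \rho(G)$, contradicting the extremality of $G$.

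Next I will construct the candidate ordering. Pick a vertex $v_0$ of maximum $f$-value as the root, and let $h(v)$ denote the distance from $v_0$ to $v$. Rank the vertices primarily by $h$, within each layer by decreasing $f$-value, and break any remaining ties by decreasing degree; Condition~(1) of Definition~\ref{BFS} is then immediate. For Condition~(3), suppose $uv, xy \in E(G)$, $uy, xv \notin E(G)$, $h(u) = h(x) = h(v)-1 = h(y)-1$, $u \prec x$, but $y \prec v$. Then $f(u) \ge f(x)$ and $f(y) \ge f(v)$ with at least one inequality strict, so the switching lemma would strictly increase $\rho$, a contradiction. For Condition~(2), if the ordering forces $u \prec v$ but $d(u) < d(v)$, then there is a neighbor $z$ of $v$ with $z \notin N(u) \cup \{u\}$; a suitable switch at $z$ against a non-edge incident to $u$ again strictly increases $\rho$.

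The main obstacle is ensuring that the switched graph $G'$ remains connected, since only then does the Rayleigh contradiction actually go through. I plan to handle this by (a) always selecting a deepest counterexample, i.e.\ one with maximal $h$, so that the part of $G$ above the switch level remains a spanning connected substructure of $G'$; and (b) when $uv$ or $xy$ happens to be a cut-edge, replacing the single switch by a short composition of two switches that factors through connected intermediate graphs, each of which has strictly larger $\rho$. Carrying out this bookkeeping carefully, together with the parallel computation for $g$ and $Q(G)$, will deliver the desired BFS-ordering for both the adjacency and signless Laplacian extremal graphs.
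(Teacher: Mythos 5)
The survey states this theorem without proof (it is quoted from B{\i}y{\i}ko\u{g}lu--Leydold and Zhang), so your proposal can only be measured against the arguments in those sources; your overall strategy --- Perron vector, degree-preserving $2$-switches, Rayleigh quotient --- is indeed theirs, and both your switching identity and its signless Laplacian analogue $2\bigl(g(u)-g(x)\bigr)\bigl(g(y)-g(v)\bigr)$ are correct. But the plan has genuine gaps. First, by ranking vertices primarily by the layer $h$ and only secondarily by $f$, you make condition (1) of Definition~\ref{BFS} trivial at the cost of losing control of condition (2): for $u\prec v$ lying in different layers you no longer know $f(u)\ge f(v)$, and your sketch for condition (2) implicitly needs exactly that, since the relevant switch $G-vz-uw+uz+vw$ changes the Rayleigh quotient by $2\bigl(f(u)-f(v)\bigr)\bigl(f(z)-f(w)\bigr)$. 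The missing and essential lemma is that in an extremal graph $f$ is non-increasing with distance from a vertex of maximal $f$-value; the cited proofs order by $f$ first and \emph{prove} layer monotonicity rather than imposing it. You also do not treat the case $N(v)\subseteq N(u)\cup\{u\}$ (where no vertex $z$ of the kind you describe exists even though $d(v)=d(u)+1$), nor the tie case $f(u)=f(x)$ and $f(y)=f(v)$ in condition (3), where the switch yields no contradiction and one must instead argue that the ordering can be re-chosen within level sets of $f$.

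Second, your treatment of connectivity, which you correctly flag as the main obstacle, is not yet an argument. Remedy (a) is unnecessary where it applies and unjustified as stated: for a condition-(3) switch, with $h(u)=h(x)=h(v)-1=h(y)-1$, connectivity of $G'$ is automatic for \emph{every} such switch --- a shortest path from any vertex to the root traverses $uv$ only as $v\to u$ and $xy$ only as $y\to x$, can use at most one of the two, and can be rerouted through the new edge $vx$ (or $yu$) followed by a shortest $x$--$v_0$ path, which cannot pass through $u$ or $y$ for distance reasons --- so no ``deepest counterexample'' selection is needed there, and the phrase ``spanning connected substructure'' does not describe what actually happens. Where connectivity genuinely can fail, namely the switches used for the degree condition, which need not join consecutive layers, remedy (b), ``a short composition of two switches that factors through connected intermediate graphs,'' is precisely the step that needs proof: you must exhibit the intermediate graphs and verify that each lies in $\mathcal{G}_{\pi}$ and strictly increases the Rayleigh quotient. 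Until the layer-monotonicity lemma and this connectivity bookkeeping are carried out, the proposal is an outline of the known proof rather than a proof.
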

Moreover,  all extremal graphs having the (signless Laplacian) spectral radius for bicyclic (tricyclic) degree sequence have been characterized (see \cite{huangliu2011},\cite{jiang2011}, \cite{liumuhuo2012}).

On the Dirichlet eigenvalues for bicyclic degree sequences, we have the following results.
For given a graphic bicyclic degree sequence $\pi=(d_0, \cdots, d_{n-1})$ with $3\le d_0\le \cdots d_{k-1}$ and $d_k=\cdots=d_{n-1}=1$, let $G_{\pi, B}^*$ be the unique graph having SLO* ordering $v_0v_1\cdots v_{n-1}$ with the induced subgraph by $\{v_0, v_1, v_2, v_3\}$ being $K_4-v_2v_3$. Then

\begin{theorem}\cite{zhanggjzhang2012}\label{theorem 4.2}
Let $\pi=(d_0, d_1, \cdots, d_{k-1}, 1, 1, \cdots, 1)$ be a
 graphic bicyclic degree sequence with  $3\le d_0\le d_1\cdots\le
d_{k-1}$ for $k\ge 4$. Then  for any bicyclic graph $G$ with degree sequence $\pi$,  $\lambda^D(G)\ge \lambda^D(G_{\pi, B})$ with equality if and only if $G= G_{\pi, B}^{\ast}$. In other words, $G_{\pi, B}^{\ast}$  is the only  extremal graph with the smallest first Dirichlet eigenvalue
 in $\mathcal{G}_{\pi}$.
\end{theorem}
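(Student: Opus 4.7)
The plan is to mimic the strategy used for trees (Theorem \ref{kf-tree}) and unicyclic graphs (Theorem \ref{theorem1.2}): pick a Faber--Krahn extremizer $G$ in $\mathcal{G}_{\pi}$, normalize its first Dirichlet eigenfunction $f$ to be positive on $V_0$, and use the Rayleigh quotient
\[
\lambda^D(G)\;=\;\min_{f\not\equiv 0,\ f|_{\partial V}=0}\,\frac{\sum_{uv\in E}(f(u)-f(v))^2}{\sum_{u\in V_0}f(u)^2}
\]
to drive edge-switching arguments. The core move is the standard Kelmans/rotation lemma: if $uv,\,xy\in E(G)$, $uy,\,xv\notin E(G)$, and $f(u)\ge f(x)$, $f(v)\le f(y)$ with at least one strict inequality, then replacing $\{uv,xy\}$ by $\{uy,xv\}$ strictly decreases the numerator of the Rayleigh quotient while preserving the degree sequence and the bicyclic count, contradicting extremality. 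Using this repeatedly, one shows that the vertex ordering $\prec$ defined by $v\prec u \iff f(v)>f(u)$ (with ties broken by height, then arbitrarily) satisfies conditions (1)--(4) of Definition \ref{definition3.1}; in particular, boundary vertices $u$ with $f(u)=0$ must come last, and among interior vertices, larger $f$-value forces larger degree.

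Next I would record the height structure. Because $f>0$ on $V_0$ and attains its maximum on the vertex $v_0$ one designates as the root, an analogue of the Courant nodal argument (or a direct comparison) forces $f$ to be monotone non-increasing along some BFS layering; this gives condition (1) of SLO*. Condition (3) (boundary vertices ordered last within each layer) follows again from the switching lemma, since pushing an internal edge away from the largest-$f$ interior vertex toward a pendant attached to a smaller-$f$ vertex strictly lowers the Rayleigh quotient. Conditions (2) and (4) are the same rotational arguments applied within a single layer and across consecutive layers, respectively.

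Once $G$ is known to admit an SLO*-ordering, I would analyze the top of the ordering. Since $d_0,d_1,d_2,d_3\ge 3$ (we are in the case $k\ge 4$ and all internal degrees are at least $3$), the first four vertices $v_0,v_1,v_2,v_3$ must pack as many mutual edges as possible consistently with the cycle rank $2$. A bicyclic graph on $n$ vertices has exactly $n+1$ edges, so the induced subgraph on $\{v_0,v_1,v_2,v_3\}$ can have at most $5$ edges; the unique way to realize $5$ edges on four vertices is $K_4-e$. A final switching argument shows that any placement of the ``extra'' cycle edges outside $\{v_0,v_1,v_2,v_3\}$ can be pushed upward to reduce $\lambda^D$: moving a chord from a lower-indexed pair to the pair $(v_0,v_1)$ (the top two $f$-values) strictly lowers the Rayleigh quotient. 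Iterating forces the induced subgraph on $\{v_0,v_1,v_2,v_3\}$ to be $K_4-v_2v_3$ (the non-edge must be between the two smallest $f$-values of the four, by the Kelmans lemma), which is exactly the prescription defining $G_{\pi,B}^{\ast}$. Combined with uniqueness of an SLO*-graph with prescribed $\pi$ and prescribed top-vertex configuration, this yields $G\cong G_{\pi,B}^{\ast}$ and hence both the inequality and the equality characterization.

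The main obstacle I expect is twofold. First, the switching lemma must preserve bicyclicity: a naive edge swap can merge the two cycles or create a tree plus a $\theta$-graph in the wrong way, so each switching step needs a careful bookkeeping of which edges are ``cycle edges'' versus ``bridge edges,'' or else one must argue at the level of 2-edge deletions and additions that preserve the cycle rank. Second, pinning down that the non-edge in the top-four clique lies specifically between $v_2$ and $v_3$ (rather than, say, $v_0v_3$) requires a delicate use of the SLO* order against the eigenfunction: the Kelmans-type comparison $(f(v_0)-f(v_3))^2+(f(v_1)-f(v_2))^2$ versus $(f(v_0)-f(v_2))^2+(f(v_1)-f(v_3))^2$ has to be shown to favor omitting the edge between the two smallest values, which is where the monotone coupling between $\prec$ and $f$ is indispensable.
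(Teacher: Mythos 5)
The survey itself gives no proof of this statement; it is imported from \cite{zhanggjzhang2012}, and the intended structure is visible in the companion result stated alongside it: first prove that a minimizer of the first Dirichlet eigenvalue in the class of bicyclic realizations of $\pi$ admits a numeration of its vertices that is simultaneously monotone for the eigenfunction, for the height function, and for the cross-layer edge pattern (an SLO*-type ordering), and then show that this ordering together with $\pi$ forces the induced subgraph on the top four vertices to be $K_4-v_2v_3$ and hence determines the graph up to isomorphism. Your plan is exactly this two-step route, and you correctly identify the $K_4-e$ configuration as the crux.

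Two concrete points in your sketch need repair before it is a proof. First, the switching lemma is misstated: replacing $\{uv,xy\}$ by $\{uy,xv\}$ changes the Dirichlet energy by
\[
(f(u)-f(y))^2+(f(x)-f(v))^2-(f(u)-f(v))^2-(f(x)-f(y))^2
=2\bigl(f(u)-f(x)\bigr)\bigl(f(v)-f(y)\bigr),
\]
which is nonpositive when $f(u)\ge f(x)$ and $f(v)\le f(y)$ but vanishes as soon as \emph{either} inequality is an equality, so ``at least one strict inequality'' does not yield a strict decrease. This is not cosmetic: the equality characterization (``$G=G_{\pi,B}^{\ast}$ if and only if equality holds'') lives precisely in these degenerate cases, and one must argue separately there, typically by showing that after a Rayleigh-neutral switch the function $f$ is no longer an eigenfunction of the new graph, whence the new graph has strictly smaller first Dirichlet eigenvalue. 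Second, your worry that a swap might ``merge the two cycles'' or disturb the cycle rank is misplaced --- any degree-preserving $2$-switch preserves $|E|-|V|$ automatically --- but it conceals the real hazard, which is loss of connectedness; each switch must be chosen or checked so that the resulting realization of $\pi$ stays connected, since otherwise it leaves the class under consideration. Finally, the monotonicity of $f$ along the layers should be derived from the same rearrangement arguments rather than from an appeal to Courant-type nodal reasoning, which does not directly apply in the discrete Dirichlet setting. With these repairs your outline coincides with the argument of \cite{zhanggjzhang2012}.
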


\begin{theorem}\cite{zhanggjzhang2012}
Let $\pi=(d_0, d_1, \cdots, d_{k-1}, 1, 1, \cdots, 1)$ and $\tau=(d_0', d_1', \cdots, d_{k-1}', $ $1, 1, \cdots, 1)$ be two different graphic bicyclic degree sequences. If
$\pi\triangleleft \tau$ with $d_0\ge 3,$ $d_0'\ge 3$, then
$ \lambda^D(G_{\pi, B})> \lambda^D (G_{\tau, B})$.
\end{theorem}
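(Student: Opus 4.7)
The plan is to reduce the majorization step to a single ``adjacent'' transfer via Proposition \ref{prop}, effect this transfer by a local edge-shift inside $G_{\pi, B}^*$, and then invoke Theorem \ref{theorem 4.2} applied to $\tau$ to close the chain.

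By Proposition \ref{prop}, I would string together bicyclic graphic degree sequences $\pi = \pi^{(0)} \triangleleft \pi^{(1)} \triangleleft \cdots \triangleleft \pi^{(r)} = \tau$ (each with first entry at least $3$, so that Theorem \ref{theorem 4.2} applies at every stage) in which consecutive terms differ in exactly two components; it then suffices to prove the strict decrease in the single-step case, where $\tau$ is obtained from $\pi$ by raising the degree of a heavier vertex $v_\alpha$ by one and lowering the degree of a lighter vertex $v_\beta$ by one, with $d_\pi(v_\alpha) \ge d_\pi(v_\beta)$.

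The proof of Theorem \ref{theorem 4.2} equips $G_{\pi, B}^*$ with an SLO*-ordering that is comonotone with its first Dirichlet eigenfunction $f$. Using this structure I would pick a vertex $w \in N(v_\beta) \setminus (N(v_\alpha) \cup \{v_\alpha\})$ whose $f$-value lies on the appropriate side of the midpoint $(f(v_\alpha) + f(v_\beta))/2$, and set $H := G_{\pi, B}^* - v_\beta w + v_\alpha w$. Then $H$ is a bicyclic graph with degree sequence $\tau$, and a direct computation with $f$ as a trial function in the Dirichlet Rayleigh quotient $R_G(f) = \sum_{uv \in E(G)}(f(u)-f(v))^2 / \sum_u f(u)^2$ gives
\[
R_H(f) - R_{G_{\pi, B}^*}(f) = \frac{(f(v_\alpha) - f(v_\beta))\,(f(v_\alpha) + f(v_\beta) - 2 f(w))}{\sum_u f(u)^2}.
\]
Because the SLO*-comonotonicity orders $f(v_\alpha), f(v_\beta)$ against each other in a way that, together with the choice of $w$, makes this product non-positive (and strictly negative by Perron--Frobenius positivity of $f$ on the interior), we obtain $\lambda^D(H) \le R_H(f) < \lambda^D(G_{\pi, B}^*)$. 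Since $H \in \mathcal{G}_\tau$, Theorem \ref{theorem 4.2} applied to $\tau$ yields $\lambda^D(G_{\tau, B}^*) \le \lambda^D(H) < \lambda^D(G_{\pi, B}^*)$, which is exactly the inequality we want.

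The main obstacle is the existence of a usable $w$: the SLO*-ordering of $G_{\pi, B}^*$ tightly constrains which vertices can be neighbors of $v_\alpha$ and $v_\beta$, and in extreme configurations every vertex in $N(v_\beta) \setminus (N(v_\alpha) \cup \{v_\alpha\})$ can lie on the wrong side of $(f(v_\alpha)+f(v_\beta))/2$. To handle this one has to permit an auxiliary degree-preserving $2$-switch inside a suitable SLO*-rank interval, showing via comonotonicity that this switch does not increase the Rayleigh quotient before the main edge-shift is carried out. A residual case in which $v_\alpha, v_\beta$ interact with the $K_4 - e$ core on $\{v_0, v_1, v_2, v_3\}$ of $G_{\pi, B}^*$ needs a small separate treatment, parallel to the tree argument behind Theorem \ref{kf-tree-com} and to the SLO*-consistency portion of the unicyclic Faber--Krahn result.
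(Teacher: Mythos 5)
The survey states this theorem without proof (it is quoted from \cite{zhanggjzhang2012}), so the only comparison available is with the method of that source, which does follow the template you describe: reduce the majorization to unit transfers, modify the extremal graph $G_{\pi,B}^*$ locally into a graph with the next degree sequence, bound its first Dirichlet eigenvalue by the Rayleigh quotient of the first eigenfunction $f$ of $G_{\pi,B}^*$ used as a trial function, and close the chain with Theorem \ref{theorem 4.2} applied to $\tau$. Your outer architecture is therefore the right one, and your formula for the change of the Rayleigh quotient under the shift $v_\beta w \mapsto v_\alpha w$ is correct.

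The gaps are in the heart of the argument. First, the claimed strict negativity of $(f(v_\alpha)-f(v_\beta))(f(v_\alpha)+f(v_\beta)-2f(w))$ is not available in general: by the SLO*-consistency the vertex $v_\alpha$ that gains degree lies later in the ordering (condition (4) of Definition \ref{definition3.1} forces degrees to be nondecreasing along $\prec$), so $f(v_\alpha)\le f(v_\beta)$, and when $f(v_\alpha)=f(v_\beta)$ (which does occur, e.g.\ for interior vertices in symmetric position) the change is exactly zero for every choice of $w$. The strict inequality must then be recovered by a separate argument --- typically by showing that equality $\lambda^D(H)=R_H(f)$ would force $f$ to be a first Dirichlet eigenfunction of $H$, which the altered degrees at $v_\alpha$ and $v_\beta$ rule out --- and this step is absent from your proposal. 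Second, you correctly observe that a usable $w$ (one with $f(w)$ on the correct side of the midpoint) need not exist among the neighbours of $v_\beta$, but the repair you offer, an unspecified ``auxiliary degree-preserving $2$-switch inside a suitable SLO*-rank interval,'' is precisely the nontrivial content of the proof and is not carried out; until it is, the sign of the Rayleigh-quotient change is not controlled. A smaller but genuine omission: Proposition \ref{prop} only produces intermediate \emph{graphic} sequences, and you must check (using that $\pi$ and $\tau$ have the same number of entries equal to $1$ and all remaining entries at least $3$, together with $\pi\triangleleft\pi_i\triangleleft\tau$) that each intermediate sequence is again a bicyclic sequence of this restricted form, so that $G_{\pi_i,B}^*$ is defined and Theorem \ref{theorem 4.2} applies at every stage; you assert this in passing but do not justify it.
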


A graph $H$ is a {\it minor} of a graph $G$ if $H$ can be
obtained from a subgraph of $G$ by contracting edges. An $H$ minor
is a minor isomorphic to $H$. If $H$ is a complete graph, we
 say that $G$ contains a {\it clique minor} of size $|H|$.
 For a graph $G$, the
Hadwiger number $h(G)$ of G is the maximum integer $k$ such that $G$
contains a clique minor of size $k.$
Moreover, denote by  $\chi(G)$ the chromatic number of $G$. Let
\begin{equation}h(\pi)=\max\{h(G): G\in \mathcal{G}_{\pi}\}
, \ \  \chi(\pi)=\max\{\chi(G): G\in \mathcal{G}_{\pi}\}.\end{equation}
\begin{conjecture} (Hadwiger＊s Conjecture for Degree Sequences, \cite{robertson2010})
For every graphic degree sequence $\pi$, $\chi(\pi)\le h(\pi)$.
\end{conjecture}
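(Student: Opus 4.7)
The plan is to split the conjecture into an upper bound on $\chi(\pi)$ and a matching lower bound on $h(\pi)$ furnished by an explicit construction. For the upper bound, I would use the Welsh--Powell / Szekeres--Wilf greedy coloring: given any realization $G\in\mathcal{G}_\pi$ with $\pi=(d_0\ge d_1\ge\cdots\ge d_{n-1})$, order the vertices by nonincreasing degree and color them greedily, so that the $i$-th vertex has at most $\min(d_i,i)$ earlier neighbors and therefore
\[ \chi(G)\le k^{\ast}(\pi) := 1+\max_{0\le i\le n-1}\min(d_i,i). \]
Taking the maximum over realizations, $\chi(\pi)\le k^{\ast}(\pi)$. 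This reduces the conjecture to producing a single realization $G_\pi^{\sharp}\in\mathcal{G}_\pi$ with $h(G_\pi^{\sharp})\ge k^{\ast}(\pi)$, because any such realization then gives $h(\pi)\ge h(G_\pi^{\sharp})\ge k^{\ast}(\pi)\ge \chi(\pi)$.

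For the construction, guided by the BFS/greedy graphs of Section~3, I would place the $k^{\ast}(\pi)$ highest-degree vertices in the first few BFS layers so as to maximize the number of edges among them, and then certify a $K_{k^{\ast}(\pi)}$-minor in $G_\pi^{\sharp}$ by taking each top vertex as one branch set and contracting short chains of lower-degree descendants to supply any missing adjacencies among branch sets. The existence of suitable pendant chains is where the graphic inequalities of Theorem~\ref{erdos-th} enter: the deficit $\sum_{i\le r}d_i - r(r+1)$ controls how many edges are forced outside the top $r+1$ vertices and therefore how much ``material'' is available to be folded back in by contraction.

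The main obstacle is Stage~2, and the analysis naturally splits into two regimes at the maximizing index $i^{\ast}$. When $d_{i^{\ast}}\le i^{\ast}$, so that $k^{\ast}=d_{i^{\ast}}+1$, there may not be enough edges among the top vertices to supply direct adjacencies, and one must route the $K_{k^{\ast}}$-subdivision through contractions of pendant subtrees whose existence must be squeezed out of the Erd\H{o}s--Gallai conditions. When $d_{i^{\ast}}>i^{\ast}$, the top vertices carry enough edges, but graphicality can scatter them away from the desired clique; here a degree-preserving edge-swap argument in the spirit of Proposition~\ref{prop} should allow one to migrate from an arbitrary realization to $G_\pi^{\sharp}$ without decreasing the Hadwiger number, thereby concentrating the edges onto a clique or near-clique among the top vertices.

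As a sanity check and starting point, for $k^{\ast}(\pi)\le 6$ the proved cases of the classical Hadwiger conjecture applied to any single realization already give $\chi(\pi)\le \chi(G)\le h(G)\le h(\pi)$ for that realization, so the novel content lies entirely in the regime $k^{\ast}(\pi)\ge 7$, where Hadwiger's conjecture itself is open. Consequently, any degree-sequence specific proof must genuinely exploit the extra freedom that the conjecture allows — namely that $\chi(\pi)$ and $h(\pi)$ may be witnessed by \emph{different} realizations of $\pi$ — and this flexibility, absent in the classical formulation, is the only real leverage the construction of $G_\pi^{\sharp}$ has to offer beyond existing techniques.
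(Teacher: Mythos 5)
This statement is presented in the survey as a \emph{conjecture} (attributed to Robertson and Song), with no proof given; the paper only records afterwards that Dvo\v{r}\'{a}k and Mohar resolved it via the stronger chain $\chi(\pi)\le h'(\pi)\le h(\pi)$ through topological minors. So there is no ``paper proof'' to match, and your text is in any case a plan rather than a proof: Stage~2, which you yourself identify as ``the main obstacle,'' is exactly the entire content of the theorem and is left unexecuted.

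More seriously, the reduction in Stage~1 is already unsound, so no amount of work on Stage~2 can rescue it. You propose to prove $h(\pi)\ge k^{\ast}(\pi):=1+\max_i\min(d_i,i)$ by exhibiting a realization with a $K_{k^{\ast}(\pi)}$-minor, but this inequality is false. Take $\pi=(2,2,\dots,2,1,1)$ with $n\ge 5$: then $k^{\ast}(\pi)=1+\min(d_2,2)=3$, yet the unique connected realization is the path $P_n$ (it has $n-1$ edges, hence is a tree, and a tree with two leaves is a path), which has no $K_3$-minor, so $h(\pi)=2<3=k^{\ast}(\pi)$. The same failure occurs for every tree degree sequence with $d_2\ge 2$: all connected realizations are trees, so $h(\pi)=2$ while $k^{\ast}(\pi)\ge 3$. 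The Welsh--Powell/Szekeres--Wilf bound is simply too loose an upper bound on $\chi(\pi)$ to be matched by a clique-minor lower bound; in the path example $\chi(\pi)=2=h(\pi)<k^{\ast}(\pi)$, so the conjecture holds but your intermediate quantity sits strictly above both sides. Any viable argument must bound $\chi(\pi)$ by something that is itself certified by a minor (as Dvo\v{r}\'{a}k--Mohar do with $h'(\pi)$), rather than by a degeneracy-type quantity computed from the degree sequence alone.
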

Recently, Dvo\u{r}\'{a}k and Mohar proved a stronger result than the above conjecture.
\begin{theorem}
\cite{dvorakmohar2014} For every graphic degree sequence $\pi$,
\begin{equation}\chi(\pi)\le h^{\prime}(\pi)\le h(\pi),\end{equation}
 where
$h^{\prime}(G)$ is the maximum $k$ such that $G$ has a $K_k$-
topological minor and
 $\chi{\prime}(\pi)=\max\{h^{\prime}(G): G \in
\mathcal{G}_{\pi}\}$.
\end{theorem}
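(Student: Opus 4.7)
\medskip

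\noindent \textbf{Proof plan.} The inequality splits cleanly into two halves of very different weight. The right-hand bound $h'(\pi) \le h(\pi)$ is immediate pointwise: any $K_k$-subdivision in a graph $G$ becomes a $K_k$-minor once each subdivided path is contracted to a single edge, so $h'(G) \le h(G)$ for every $G$. Taking the maximum over $G \in \mathcal{G}_\pi$ on both sides preserves this.

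The substance is in $\chi(\pi) \le h'(\pi)$. Set $k = \chi(\pi)$ and fix a realization $G_0 \in \mathcal{G}_\pi$ with $\chi(G_0) = k$. My first move is to pass to a colour-critical subgraph $H \subseteq G_0$, for which a classical theorem of Dirac gives $\delta(H) \ge k-1$ and $|V(H)| \ge k$. Thus at least $k$ vertices of $G_0$ carry degree $\ge k-1$ in $G_0$, so the prescribed sequence $\pi$ contains at least $k$ entries that are $\ge k-1$. These will house the branch vertices of the targeted $K_k$-subdivision.

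I would then build an explicit realization $G \in \mathcal{G}_\pi$ as follows. Assign the top $k$ slots of $\pi$ to branch vertices $b_1, \ldots, b_k$ and call the remainder \emph{reservoir} slots. For each pair $\{i,j\}$, realise the $b_i$--$b_j$ link either by a direct edge $b_i b_j$ or, when the residual degree budget at $b_i$ or $b_j$ is already exhausted, by a length-two detour $b_i r_\ell b_j$ through an unused reservoir vertex $r_\ell$. After the $\binom{k}{2}$ internally disjoint linking paths are installed, the remaining stubs are filled in by any edge set on the reservoir slots consistent with the leftover residual degrees, and connectedness is certified via the Erd\H{o}s--Gallai--Senior--Hakimi criterion (Theorem~\ref{sen-hak}). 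The resulting $G$ satisfies $h'(G) \ge k$, which gives $\chi(\pi) = k \le h'(\pi)$.

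The crux is feasibility: after subtracting the degree absorbed by the subdivision skeleton, the residual sequence must remain graphic and must admit a \emph{connected} completion. I expect this to be the main technical obstacle, requiring a careful Erd\H{o}s--Gallai-type inequality check at each routing step, together with occasional 2-switches in the spirit of Proposition~\ref{prop} to repair local obstructions. The delicate case is when many reservoir slots equal $1$, forcing length-two routings that threaten to collide on a single reservoir vertex; one must then redistribute pendant neighbours among the branch slots with extra headroom so as to free up reservoir capacity without reducing any branch degree. Once this feasibility is secured, the two halves combine to give $\chi(\pi) \le h'(\pi) \le h(\pi)$.
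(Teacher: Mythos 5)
Your right-hand inequality $h'(\pi)\le h(\pi)$ is fine and genuinely trivial (contract the subdivided paths), and your use of a colour-critical subgraph to extract $k=\chi(\pi)$ entries of $\pi$ that are at least $k-1$ is a correct and sensible first step. But the remainder is a plan, not a proof, and the step you yourself flag as ``the crux'' --- feasibility of completing the $K_k$-subdivision skeleton to a connected realization of $\pi$ --- is precisely the entire content of the Dvo\v{r}\'{a}k--Mohar theorem. Note that the survey you are working from gives no proof at all: the statement is only cited to \cite{dvorakmohar2014}, so there is no argument in the paper for your sketch to be measured against, and the original proof is a substantial piece of work, not a routine Erd\H{o}s--Gallai bookkeeping exercise.

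Concretely, the gap shows up already in simple cases. Having $k$ entries of $\pi$ at least $k-1$ is necessary for a $K_k$-subdivision but nowhere near sufficient as stated: if the reservoir entries are all equal to $1$, a reservoir vertex cannot serve as an internal vertex of any linking path (it would need degree at least $2$), so all $\binom{k}{2}$ links must be direct edges; if the branch entries equal exactly $k-1$ this exhausts their degree, and the pendant vertices then have nowhere to attach, destroying connectedness. Your proposed remedy --- ``redistribute pendant neighbours among the branch slots with extra headroom'' and repair with $2$-switches in the spirit of Proposition~\ref{prop} --- is exactly where a quantitative argument is needed (how much headroom is guaranteed by $\chi(\pi)=k$? why does some realization always have enough?), and none is supplied. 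Until that is done you have proved only $h'(\pi)\le h(\pi)$, not the inequality that matters.
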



\frenchspacing

\end{document}